\newcommand{\mg}{\mathcal{G}}
\newcommand{\ep}{\epsilon}
\newcommand{\mc}{\mathcal{C}}
\newcommand{\ms}{\mathcal{S}}
\newtheorem{thm}{Theorem}[section]
\newtheorem{lmm}[thm]{Lemma}
\newtheorem{cor}[thm]{Corollary}
\newtheorem{defn}[thm]{Definition}
\theoremstyle{definition}
\newtheorem{ex}[thm]{Example}
\newcommand{\avg}[1]{\bigl\langle #1 \bigr\rangle}
\newcommand{\md}{\mathcal{D}}
\newcommand{\mf}{\mathcal{F}}
\newcommand{\cp}{\mathcal{P}}
\newcommand{\mv}{\mathcal{V}}
\newcommand{\rr}{\mathbb{R}}
\newcommand{\smallavg}[1]{\langle #1 \rangle}
\newcommand{\mn}{\mathcal{N}}
\newcommand{\tT}{\tilde{T}}
\newcommand{\mfn}{\mathfrak{N}}
\newcommand{\mfc}{\mathfrak{C}}
\newcommand{\ts}{\tilde{s}}
\newcommand{\tp}{\tilde{\P}}
\numberwithin{equation}{section}
\newcommand{\tx}{\tilde{X}}
\newcommand{\ty}{\tilde{Y}}
\DeclareMathOperator{\tr}{tr}
\newcommand{\Z}{\mathbb{Z}}
\newcommand{\R}{\mathbb{R}}
\newcommand{\1}{\mathbbm{1}}
\newcommand{\E}{\mathbb{E}}
\renewcommand{\P}{\mathbb{P}}
\renewcommand{\d}{\mathrm{d}}
\renewcommand{\tilde}{\widetilde}
\begin{document}

\title{Average Gromov hyperbolicity and the Parisi ansatz}
\author{Sourav Chatterjee}
\address{Department of Statistics, Stanford University, 390 Jane Stanford Way, Stanford, CA 94305}
\email{souravc@stanford.edu}
\author{Leila Sloman}
\address{Department of Mathematics, Stanford University, 450 Jane Stanford Way, Building 380, Stanford, CA 94305}
\email{lsloman@stanford.edu}
\thanks{Sourav Chatterjee's research was partially supported by NSF grant DMS-1855484}
\thanks{Leila Sloman's research was partially supported by NSF grant  DGE-1656518.}
\keywords{Hyperbolic metric space, Gromov hyperbolicity, ultrametricity, spin glass, negative curvature}
\subjclass[2010]{51M10, 53C23, 60K35, 82B44}

\begin{abstract}
Gromov hyperbolicity of a metric space measures the distance of the space from a perfect tree-like structure. The measure has a ``worst-case'' aspect to it, in the sense that it detects a region in the space which sees the maximum deviation from tree-like structure. In this article we introduce  an ``average-case'' version of Gromov hyperbolicity, which detects whether the ``most of the space'', with respect to a given probability measure, looks like a tree. The main result of the paper is that if this average hyperbolicity is small, then the space can be approximately embedded in a tree. The proof uses  a weighted version of Szemer\'edi's regularity lemma from graph theory. The result applies to Gromov hyperbolic spaces as well, since average hyperbolicity is bounded above by Gromov hyperbolicity.  As an application, we give a construction of hierarchically organized pure states in any model of a spin glass that satisfies the Parisi ultrametricity ansatz.   
\end{abstract}


\maketitle


\section{Gromov hyperbolicity}
Let $(S,d)$ be a metric space. The {\it Gromov product} of two points $x,y\in S$ with respect to a third point $z\in S$ is defined as
\[
(x,y)_z := \frac{1}{2}(d(x,z)+d(y,z)-d(x,y)). 
\]
Note that by the triangle inequality, the Gromov product is always nonnegative. The space is called $\delta$-hyperbolic (as defined by Gromov~\cite{gromov87}) if for any four points $x,y,z,w\in S$, 
\begin{align}\label{fourpoint}
(x,y)_w \ge \min\{(x,z)_w, (y,z)_w\} -\delta. 
\end{align}
The smallest $\delta$ for which this is satisfied is known as the Gromov hyperbolicity of $(S,d)$. The condition \eqref{fourpoint} is known as Gromov's four point condition. It is not hard to show that if \eqref{fourpoint} is satisfied for all $x,y,z$ for a given $w_0$, then it can be shown that it is satisfied for all $w$ with $2\delta$ in place of $\delta$. Thus, we may equivalently define hyperbolicity using a three point condition, by fixing $w$. If \eqref{fourpoint} is satisfied for all $x,y,z$ for some fixed $w$, then we say that the space is $\delta$-hyperbolic with base point $w$. 

The notion of hyperbolic metric spaces is closely related to the notion of real trees. If $(T,\rho)$ is a metric space and $x,y\in T$, an {\it arc} from $x$ to $y$ is the image of a topological embedding $\gamma: [a,b]\to T$ with $\gamma(a) = x$ and $\gamma(b)=y$, where $[a,b]$ is a closed interval in $\R$ (allowing the possibility that $a=b$). A {\it geodesic segment} from $x$ to $y$ is the image of an isometric embedding $\gamma:[a,b]\to T$ with $\gamma(a)=x$ and $\gamma(b)=y$. A metric space $(T,\rho)$ is called a {\it real tree} if for any $x,y\in T$, there exist a unique arc from $x$ to $y$, and this arc is a geodesic segment. A real tree with a distinguished point $r\in T$ is called a rooted real tree with root $r$. 

The most elementary connection between Gromov hyperbolicity and real trees is that a metric space is $0$-hyperbolic if and only if it is isometric to a subset of a real tree.  Now suppose that a metric space $(S,d)$ is $\delta$-hyperbolic for some small but nonzero $\delta$. Is it approximately isometric to a subset of a real tree, in some sense? The following result shows that this is true when $S$ has finite cardinality, with an error proportional to $\delta\log |S|$.
\begin{thm}[\citet{ghysdelaharpe90}]\label{ghysthm}
Let $(S,d)$ be a $\delta$-hyperbolic metric space with base point $w$ and finite cardinality. Let $k$ be a positive integer such that $|S|\le 2^k + 2$. Then there exists a real tree  $(T, \rho)$ with root $r$ and a map $\Phi:S\to T$ such that for all $x\in S$, $d(x,w) = \rho(\Phi(x), r)$, and for all $x,y\in S$, $d(x,y)-2k\delta\le \rho(\Phi(x),\Phi(y))\le d(x,y)$.
\end{thm}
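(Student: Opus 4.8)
The plan is to read off a tree metric directly from the Gromov products based at $w$. Write $p(x,y):=(x,y)_w$; the hypothesis that $S$ is $\delta$-hyperbolic with base point $w$ says exactly that $p(x,z)\ge\min\{p(x,y),p(y,z)\}-\delta$ for all $x,y,z$, and one also has $p(x,x)=d(x,w)$, $p(x,w)=0$, symmetry, and $0\le p(x,y)\le\min\{p(x,x),p(y,y)\}$. If $p$ obeyed the exact ultrametric inequality $p(x,z)\ge\min\{p(x,y),p(y,z)\}$, one could form $T$ as the quotient of the disjoint union $\bigsqcup_{x\in S}[0,d(x,w)]$ obtained by gluing $[0,d(x,w)]$ and $[0,d(y,w)]$ along their common initial segment of length $p(x,y)$: the exact ultrametric inequality is precisely what makes this identification transitive, and the quotient is readily seen to be a (finite) real tree. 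Taking $r$ to be the common image of the left endpoints and $\Phi(x)$ the image of the right endpoint of $[0,d(x,w)]$, one gets $\rho(\Phi(x),r)=d(x,w)$ and $\rho(\Phi(x),\Phi(y))=d(x,w)+d(y,w)-2p(x,y)=d(x,y)$.

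Since in reality $p$ satisfies the ultrametric inequality only up to $\delta$, the idea is to pass to the best ultrametric lower-bounding $p$: set $\tilde p(x,x):=d(x,w)$ and, for $x\ne y$,
\[
\tilde p(x,y):=\max\ \min_{1\le i\le n}p(x_{i-1},x_i),
\]
where the maximum runs over all finite chains $x=x_0,x_1,\dots,x_n=y$ in $S$ with $n\ge1$ edges. Concatenating chains shows $\tilde p$ satisfies the exact ultrametric inequality; the trivial chain gives $\tilde p\ge p$; and since the first and last edges of any chain from $x$ to $y$ have $p$-value at most $d(x,w)$ and $d(y,w)$, one gets $\tilde p(x,y)\le\min\{\tilde p(x,x),\tilde p(y,y)\}$. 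Feeding $\tilde p$ into the construction of the first paragraph then yields $(T,\rho)$, $r$ and $\Phi$ with $\rho(\Phi(x),r)=d(x,w)$ and $\rho(\Phi(x),\Phi(y))=d(x,w)+d(y,w)-2\tilde p(x,y)$; as $\tilde p\ge p$ this distance is at most $d(x,y)$, so the theorem reduces to showing $\tilde p(x,y)\le p(x,y)+k\delta$, which then yields $\rho(\Phi(x),\Phi(y))\ge d(x,y)-2k\delta$.

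This last estimate is the heart of the matter, and the only place the cardinality bound enters. I would prove it in two steps. The first is a chain lemma obtained by iterating the $\delta$-inequality along a dyadic splitting: by induction on $m$, every chain of at most $2^m$ edges satisfies $\min_i p(x_{i-1},x_i)\le p(x_0,x_n)+m\delta$ — split a chain of at most $2^{m+1}$ edges into two subchains of at most $2^m$ edges each, apply the inductive hypothesis to the endpoints of each, and apply the $\delta$-inequality once more at the vertex where the two subchains meet. The second step reduces the maximum defining $\tilde p(x,y)$ (for $x,y\ne w$; the case $x=w$ or $y=w$ is trivial since then $\tilde p(x,y)=p(x,y)=0$) to short chains: one may take the vertices pairwise distinct, since shortcutting a repeated vertex only removes edges and so cannot decrease $\min_i p(x_{i-1},x_i)$; and one may assume the chain avoids $w$, since a chain with an interior vertex equal to $w$ contains an edge of $p$-value $0$ and so cannot exceed the value $p(x,y)$ of the trivial chain. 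A distinct-vertex chain from $x$ to $y$ avoiding $w$ uses only vertices of $S\setminus\{w\}$, hence has at most $|S|-2\le 2^k$ edges, and the chain lemma gives $\tilde p(x,y)\le p(x,y)+k\delta$. The main obstacle is precisely pinning down this constant: the $+2$ in $|S|\le 2^k+2$ is exactly the cost of discarding the base point $w$ together with the two endpoints $x,y$ as admissible interior vertices, and it is the dyadic splitting that converts the resulting edge bound $2^k$ into the factor $k$.
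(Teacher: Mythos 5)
Your argument is correct, and it is the standard proof of this theorem (the paper itself gives no proof and simply cites Ghys--de la Harpe, whose argument is exactly this: replace the Gromov product $p$ by its best ultrametric minorant $\tilde p$ defined via max--min over chains, glue the intervals $[0,d(x,w)]$ along initial segments of length $\tilde p(x,y)$, and control $\tilde p - p$ by the dyadic chain lemma). The bookkeeping with the cardinality bound --- distinct vertices, discarding $w$, at most $|S|-2\le 2^k$ edges, hence the factor $k$ from the dyadic splitting --- is also exactly as in the reference.
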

It is known that the error of order $\delta \log |S|$ in the above theorem cannot be improved~\cite{bowditch06}. In particular, it is not possible to have a quasi-isometry where the discrepancy depends solely on $\delta$. 


The notion of Gromov hyperbolicity,  introduced by Gromov in a group-theoretic context, has found great success in many areas of mathematics and even in science and engineering. There are many examples of metric spaces, both in theory and practice, that are almost tree-like but not exactly so. Gromov hyperbolicity is a great way to understand and study such examples. 

Still, there is one aspect of Gromov hyperbolicity that is sometimes problematic when one ventures outside the domain of very regular objects coming from pure mathematics. It is the fact that the four point condition \eqref{fourpoint} is a worst-case condition: The space is not $\delta$-hyperbolic if there is even a single four-tuple $(x,y,z,w)$ for which~\eqref{fourpoint} fails. There are examples from statistical physics and probability theory where \eqref{fourpoint} holds for most, but not all four-tuples~\cite{panchenko13}. Here ``most'' is in terms of a probability measure on the space. Similar examples arise in the applied sciences, such as in the analysis of social networks~\cite{adm14} and phylogeny reconstruction~\cite{chakerianholmes12}. 

For these reasons, one may naturally wonder whether the condition \eqref{fourpoint} may be replaced by some kind of an averaged version. This has, indeed, been proposed recently in some physics papers (such as \cite{adm14}), but these proposals have not been mathematically analyzed. The goal of this manuscript is to fill this gap: We define a natural notion of average Gromov hyperbolicity, and prove an analog of Theorem \ref{ghysthm} for this measure. Interestingly, unlike Theorem \ref{ghysthm}, this result has no dependence on the size of $S$. The proof is  more involved than the proof of Theorem \ref{ghysthm}, using a weighted version of Szemer\'edi's regularity lemma from graph theory. We apply this theorem to show that hierarchically organized pure states can be constructed in any model of a spin glass that satisfies the Parisi ultrametricity ansatz. 

\section{Main result}\label{backsec}
We will go beyond metric spaces in our definition of average hyperbolicity. Let $S$ be a set equipped with a countably generated $\sigma$-algebra $\mf$ and a probability measure $\P$ defined on $\mf$. Let $b$ be a positive real number and $s:S\times S \to [0,b]$ be a measurable function satisfying  $s(x,y)=s(y,x)$ for all $x,y\in S$. We will say that $s$ is a ``similarity function''. Intuitively, $s(x,y)$ measures the similarity between two points $x$ and $y$. Similarity functions generalize the notion of Gromov product: If $S$ has finite diameter with respect to a separable metric and is endowed with the Borel $\sigma$-algebra generated by this metric,  the Gromov product $(x,y)_w$ is a similarity function for any base point $w\in S$. 
\begin{defn}
We will say that $(S, \mf, \P, s)$ is $\delta$-hyperbolic if 
\[
\textup{Hyp}(S,\mf, \P, s) := \E(\min\{s(X,Z), s(Y,Z)\}-s(X,Y))_+\le \delta,
\]
where $x_+$ denotes the positive part of a real number $x$, and $X,Y,Z$ are i.i.d.~$S$-valued random variables with law $\P$. 
\end{defn}
It is not hard to show that $(S, \mf, \P, s)$ is $0$-hyperbolic in the above sense if and only if there is a real tree $(T,\rho)$ with root $r$ and set of leaves $S$, such that for all $x,y$ in the support of $\P$, we have $s(x,y) = (x,y)_r$, where $(x,y)_r$ is the Gromov product of $x$ and $y$  under the metric $\rho$, with respect to the base point $r$.  We will now generalize this result  when $(S,\mf, \P,s)$ is $\delta$-hyperbolic for some small $\delta$. First, recall that a graph-theoretic tree, henceforth simply called a tree, is a  connected undirected graph without self-loops or closed paths.  A rooted tree is a tree where one distinguished node is called the root. A node of a rooted tree is called a leaf if it is not the root and it has degree one.
\begin{defn}\label{treedef}
We will say that a tree $T$ with root $r$ is compatible with $(S,\mf)$ if the following three conditions are satisfied:
\begin{enumerate}
\item[\textup{(i)}] $S$ is the set of leaves of $T$, 
\item[\textup{(ii)}] $T\setminus S$ is a finite set, and 
\item[\textup{(iii)}] for any node $v\in T\setminus S$, the set of leaves that are the descendants of $v$ is a measurable subset of $S$. 
\end{enumerate}
\end{defn}
Clearly, any tree that is compatible with $(S,\mf)$ gives a hierarchical clustering of $S$, such that the number of clusters is finite and each cluster is measurable. Conversely, any such clustering defines a compatible tree. An example is shown in Figure \ref{treepic}.

If $T$ is a compatible tree with root $r$, and $x,y\in S$, we denote by $(x,y)_r$ the Gromov product of $x$ and $y$ under the graph distance on $T$, with respect to the base point $r$. From the definition of the Gromov product, it is easy to see that $(x,y)_r$ is the number of edges in the intersection of the paths leading from $x$ and $y$ to $r$ (see Figure \ref{treepic}).
\begin{figure}
\begin{center}
\begin{tikzpicture}[scale = 2]
\draw [fill] (0,0) circle [radius = 0] node [black,above] {$r$};
\draw (0,0) to (-1.5,-.5);
\draw (-1.5, -.5) to (-2,-1);
\draw [fill] (-2,-1) circle [radius = 0.04];
\draw (-1.5, -.5) to (-1,-1);
\draw [fill] (-1,-1) circle [radius = 0.04];
\draw [ultra thick] (0,0) to (0, -.5);
\draw (0,0) to (1.5, -.5);
\draw (1.5, -.5) to (2,-1);
\draw [fill] (2,-1) circle [radius = 0.04];
\draw (1.5, -.5) to (1,-1);
\draw [fill] (1,-1) circle [radius = 0.04];
\draw [ultra thick] (0, -.5) to (-.5,-1);
\draw (0, -.5) to (.5,-1);
\draw (-.5, -1) to (-.75, -1.5);
\draw (-.5, -1) to (-.25, -1.5);
\draw [fill] (-.25,-1.5) circle [radius = 0.04] node [black,below] {$y$};
\draw (.5, -1) to (.75, -1.5);
\draw [fill] (.75,-1.5) circle [radius = 0.04];
\draw (.5, -1) to (.25, -1.5);
\draw [fill] (.25,-1.5) circle [radius = 0.04];
\draw (-.75, -1.5) to (-1,-2);
\draw [fill] (-1,-2) circle [radius = 0.04] node [black,below] {$x$};
\draw (-.75, -1.5) to (-.5,-2);
\draw [fill] (-.5,-2) circle [radius = 0.04];
\end{tikzpicture}
\caption{A tree $T$ compatible with $S$, with root $r$. The leaves of $T$, shown using dots, are the elements of $S$. The number of edges in the thickened path equals the Gromov product $(x,y)_r$.\label{treepic}}
\end{center}
\end{figure}
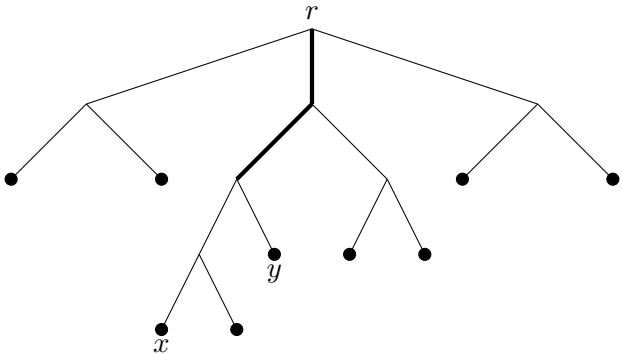
\begin{defn}
We will say that $(S,\mf, \P, s)$ is $\delta$-tree-like if 
\begin{align*}
\textup{Tree}(S,\mf, \P, s) &:= \inf_{T, \alpha}\E|s(X,Y)- \alpha(X,Y)_r|\le \delta,
\end{align*}
where $X$ and $Y$ are independent $S$-valued random variables with law $\P$, and the infimum is taken over over all $\alpha \ge 0$ and all rooted trees $T$ that are compatible with $(S,\mf)$. Here $r$ is the root of $T$ and $(X,Y)_r$ is the Gromov product of $X$ and $Y$ under the graph distance on $T$, with respect to the base point $r$.  
\end{defn}
Note that in the above definition, it follows easily by the definition of compatibility that $(X,Y)_r$ is a bounded and measurable random variable, and therefore the expectation is well-defined. 

The following theorem is the main result of this paper. It shows that $\text{Hyp}(S,\mf, \P, s)$ is small if and only if $\text{Tree}(S,\mf, \P, s)$ is small. 
\begin{thm}\label{mainresult0}
Let $S$, $\mf$, $\P$, $s$ and $b$ be as above. Then given any $\ep>0$, there is some $\delta>0$  depending only on $\ep$ and $b$, such that if $\textup{Hyp}(S,\mf,\P, s)< \delta$, then $\textup{Tree}(S,\mf,\P, s)< \ep$. Conversely, given any $\ep>0$ there is some $\delta>0$  depending only on $\ep$ and $b$, such that if $\textup{Tree}(S,\mf,\P, s)< \delta$, then $\textup{Hyp}(S,\mf,\P, s)< \ep$.
\end{thm}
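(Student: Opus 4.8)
We sketch the argument; the second implication is the easy one. Since the map $(a_1,a_2,a_3)\mapsto(\min\{a_1,a_2\}-a_3)_+$ is $1$-Lipschitz in each variable, for any two similarity functions $s,s'$ we get
\[
|\textup{Hyp}(S,\mf,\P,s)-\textup{Hyp}(S,\mf,\P,s')|\le 3\,\E\,|s(X,Y)-s'(X,Y)|,
\]
with $X,Y$ independent of law $\P$ (the factor $3$ since $X,Y,Z$ are exchangeable). If $T$ is a tree compatible with $(S,\mf)$ with root $r$ and $\alpha\ge 0$, then $(x,y)\mapsto\alpha(x,y)_r$ is exactly $0$-hyperbolic: in a tree one has $(x,y)_r\ge\min\{(x,z)_r,(y,z)_r\}$, so the positive part inside $\textup{Hyp}$ vanishes identically, and scaling by $\alpha$ preserves this. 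Hence $\textup{Tree}(S,\mf,\P,s)<\delta$ yields $\textup{Hyp}(S,\mf,\P,s)\le 3\delta$, and $\delta=\ep/3$ works.

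For the main implication the plan is to build the approximating tree from the super-level sets of $s$. For $t\in[0,b]$ let $W_t$ denote the symmetric $\{0,1\}$-valued kernel $W_t(x,y)=\1[s(x,y)\ge t]$; these are nested, with $W_t\ge W_{t'}$ for $t\le t'$. A layer-cake computation gives
\[
\textup{Hyp}(S,\mf,\P,s)=\int_0^b q(t)\,dt,\qquad q(t):=\E\big[W_t(X,Z)\,W_t(Y,Z)\,(1-W_t(X,Y))\big],
\]
$X,Y,Z$ i.i.d.\ of law $\P$, where $q(t)$ is the density of ordered induced paths on three vertices (with $Z$ in the middle) in the graph $W_t$. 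So $\textup{Hyp}(S,\mf,\P,s)<\delta$ forces the bad set $\{t\in[0,b]:q(t)>\eta\}$ to have Lebesgue measure at most $\delta/\eta$, for every $\eta>0$.

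The crucial ingredient is a consequence of a \emph{weighted version of Szemer\'edi's regularity lemma}---equivalently, of the Alon--Fischer--Krivelevich--Szegedy induced removal lemma applied to the path on three vertices and transported to a weighted vertex set: for every $\gamma>0$ there is $\eta>0$, depending only on $\gamma$, such that if a symmetric $\{0,1\}$-valued kernel $W$ on $(S,\mf,\P)$ has density of ordered induced three-vertex paths at most $\eta$, then there is a measurable equivalence relation $E$ on $S$ with finitely many classes such that $\E\,|W(X,Y)-\1_E(X,Y)|\le\gamma$. (A graph with no induced three-vertex path is a disjoint union of cliques, i.e.\ an equivalence relation; merging the negligibly small cliques leaves finitely many, at a small cost in $L^1$.) Applying this at each good threshold $t$ gives $E_t$ close to $W_t$. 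The $E_t$ need not be nested, but the $W_t$ are; replacing $E_t$ by the intersection $\bigcap_u E_u$ over a suitable finite set of good $u\le t$ and using $W_u\ge W_t$, one obtains genuinely nested measurable equivalence relations that are still $L^1(\P\times\P)$-close to the corresponding $W_t$.

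To finish, cut $[0,b]$ into $m\approx b/\ep$ equal intervals, discard the (few---at most a bounded multiple of $\delta m/(b\eta)$) intervals that meet the bad set in more than half their length, pick one good threshold $u_j$ in each surviving interval, and form the nested equivalence relations $E'_j$ as above. The laminar family $\{E'_j\}$ is precisely the level structure of a finite rooted tree $T$ compatible with $(S,\mf)$ whose leaves are the elements of $S$, and with unit edge lengths one has $(x,y)_r=\sum_j\1_{E'_j}(x,y)$ for $x\ne y$; thus $\frac bm(x,y)_r=\frac bm\sum_j\1_{E'_j}(x,y)$ is a scaled compatible-tree Gromov product. A layer-cake/telescoping estimate bounds $\E\,|s(X,Y)-\frac bm(X,Y)_r|$ by the sum of a discretization error of order $b/m$, a contribution of order $\delta/\eta$ from the discarded thresholds, and an accumulated cleaning error of order $m^2 b\gamma$ from the regularity step. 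The delicate point is to choose the parameters in a non-circular order: fix $m$ large (in terms of $\ep,b$), then $\gamma$ small (in terms of $m$, hence of $\ep,b$), then $\eta=\eta(\gamma)$ from the crucial ingredient, and finally $\delta$ small (in terms of $\eta$ and $\ep,b$); all three errors then fall below $\ep/3$, so $\textup{Tree}(S,\mf,\P,s)<\ep$. The main obstacle is the crucial ingredient itself: a regularity/removal statement valid over an arbitrary countably generated probability space with a possibly atomic measure. The rest is bookkeeping.
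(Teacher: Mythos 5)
Your plan is correct in outline and follows the same strategy as the paper: a layer-cake decomposition of $\textup{Hyp}$ over thresholds $t$, an appeal to (weighted) regularity/removal to show that at a good threshold the level-set graph $W_t$ is $L^1$-close to a disjoint union of cliques, and assembly of a compatible tree from nested partitions. Two choices differ from the paper. For the easy converse, your $1$-Lipschitz observation plus the $0$-hyperbolicity of scaled tree Gromov products gives $\textup{Hyp}\le 3\,\textup{Tree}$ cleanly; the paper instead uses Markov's inequality and ends with $5\sqrt{\delta}$. Your version is tighter and slicker. For the nesting of equivalence relations across levels, you intersect the approximating relations $E_{u_k}$ globally ($E'_j=\bigcap_{k\le j}E_{u_k}$) and pay an accumulated error of order $m\gamma$ per level using $W_{u_j}\le W_{u_k}$ for $k\le j$; the paper instead rebuilds the partition recursively, re-running the regularity lemma inside each non-singleton cluster of the previous level (Section~\ref{treesec}). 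Both work; the global intersection avoids tracking that sub-clusters stay large enough to re-apply regularity, while the recursive scheme keeps the per-level error local and matches the paper's bespoke clique lemma (Lemma~\ref{cliquelmm}).

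Where you hand-wave is exactly where the paper spends most of its effort, and you are right to flag it. Your ``crucial ingredient'' — a $P_3$-removal statement on a weighted vertex set with possibly large atoms — is not available off the shelf; the paper proves a vertex-weighted regularity lemma (Theorem~\ref{weightedfinal}) only under the bounded-atom condition $\mu^*\le p(\ep,m)\mu(S)$, and then deduces the clique-removal statement via Lemmas~\ref{mainlmm}--\ref{cliquelmm}, which use the small-hyperbolicity hypothesis directly (via $\frak{R}^1,\frak{R}^2$) rather than a black-box AFKS-type removal lemma. The passage from bounded atoms to general $\P$ (and from finite $S$, power-set $\sigma$-algebra to arbitrary countably generated $\mf$) is done in the last section of the paper by splitting atoms into copies and by approximating $s$ by a $\cp_n$-measurable step function; this is more than bookkeeping and also quietly handles the diagonal term $\P(X=Y)$, which would otherwise spoil the estimate $\E|s(X,Y)-\alpha(X,Y)_r|$ when $\P$ has large atoms (your tree forces $\alpha(x,x)_r\approx b$ regardless of $s(x,x)$). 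So: correct skeleton, genuinely different and arguably cleaner choices in two places, but the weighted removal lemma with atoms and the reduction from general $(S,\mf,\P)$ are real work that your sketch defers.
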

The above theorem is a generalization of Theorem \ref{ghysthm} to the setting of average hyperbolicity. The statement is more satisfactory than that of Theorem \ref{ghysthm} in that the error has no dependence on the size of $S$. In particular, it remains meaningful even if $S$ has infinite cardinality. Moreover, since Gromov hyperbolicity is obviously greater than or equal to the average hyperbolicity with respect to any probability measure (where the similarity function is the Gromov product with respect to a base point), Theorem \ref{mainresult0} immediately implies the following corollary  about Gromov hyperbolic metric spaces. 
\begin{cor}\label{maincor}
Let $(S,d)$ be a separable metric space with finite diameter $D$, which is $\delta$-hyperbolic with respect to a base point $w$ in Gromov's sense. Then for any probability measure $\P$ defined on the Borel $\sigma$-algebra of $S$, there is a rooted tree $T$ with root $r$ that is compatible with $S$ in the sense of Definition \ref{treedef}, and a number $\alpha \ge 0$, such that 
\[
\iint |(x,y)_w- \alpha(x,y)_r| \d \P(x)\d \P(y)\le \ep(\delta, D),
\]
where $\ep(\delta, D)$ is a number depending only on $\delta$ and $D$ which tends to $0$ as $\delta \to0$. Here $(x,y)_w$ is the Gromov product of $x$ and $y$ under the metric $d$, with respect to the base point $w$, and $(x,y)_r$ is the Gromov product of $x$ and $y$ under the graph distance on $T$, with respect to the base point~$r$. 
\end{cor}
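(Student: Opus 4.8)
The plan is to deduce Corollary \ref{maincor} directly from Theorem \ref{mainresult0}, by verifying that the Gromov product $(\cdot,\cdot)_w$ is a legitimate similarity function on $S$ and that the average hyperbolicity it induces is dominated by the Gromov hyperbolicity constant $\delta$.

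First I would set $b := D$ and $s(x,y) := (x,y)_w = \tfrac12\bigl(d(x,w)+d(y,w)-d(x,y)\bigr)$. Symmetry of $s$ is immediate, and the triangle inequality gives $0\le s(x,y)\le \min\{d(x,w),d(y,w)\}\le D$, so $s$ takes values in $[0,b]$. Since $d$ is $1$-Lipschitz in each argument it is jointly continuous, and because $S$ is separable the Borel $\sigma$-algebra of $S\times S$ coincides with $\mf\otimes\mf$; hence $s$ is $(\mf\otimes\mf)$-measurable. Separability also makes $\mf$ countably generated (balls centered at a countable dense set), so $(S,\mf,\P,s)$ fits the framework of Section \ref{backsec}. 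Now, because $(S,d)$ is $\delta$-hyperbolic with base point $w$, the four point inequality \eqref{fourpoint} holds for \emph{every} $x,y,z\in S$ with this fixed $w$; equivalently $\min\{s(x,z),s(y,z)\}-s(x,y)\le\delta$ pointwise, so its positive part is at most $\delta$ everywhere, and therefore $\textup{Hyp}(S,\mf,\P,s)=\E\bigl(\min\{s(X,Z),s(Y,Z)\}-s(X,Y)\bigr)_+\le\delta$.

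It then remains to repackage Theorem \ref{mainresult0}. Its first half assigns to every $\eta>0$ a threshold $\delta^*(\eta,D)>0$, depending only on $\eta$ and $b=D$, such that $\textup{Hyp}<\delta^*(\eta,D)$ forces $\textup{Tree}<\eta$. Define
\[
\ep_0(\delta,D):=\inf\bigl(\{D\}\cup\{\eta>0:\delta<\delta^*(\eta,D)\}\bigr),\qquad \ep(\delta,D):=2\,\ep_0(\delta,D).
\]
Since $\textup{Hyp}(S,\mf,\P,s)\le\delta$, for every $\eta$ with $\delta<\delta^*(\eta,D)$ we get $\textup{Tree}(S,\mf,\P,s)<\eta$; taking the infimum over such $\eta$, together with the trivial bound $\textup{Tree}\le b=D$ (use $\alpha=0$), yields $\textup{Tree}(S,\mf,\P,s)\le\ep_0(\delta,D)$. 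As the defining infimum of $\textup{Tree}$ need not be attained, I would then pick, using $\ep_0(\delta,D)>0$ as the gap, a rooted tree $T$ compatible with $S$ with root $r$ and some $\alpha\ge0$ such that $\iint|(x,y)_w-\alpha(x,y)_r|\,\d\P(x)\,\d\P(y)<\ep_0(\delta,D)+\ep_0(\delta,D)=\ep(\delta,D)$. Finally, for any fixed $\eta_0>0$ we have $\delta^*(\eta_0,D)>0$, so $\ep_0(\delta,D)\le\eta_0$ whenever $\delta<\delta^*(\eta_0,D)$; hence $\ep(\delta,D)\to0$ as $\delta\to0$, which is all that is required.

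I expect no genuine obstacle here: all the mathematical substance lives in Theorem \ref{mainresult0}, and the corollary is essentially a change of viewpoint plus $\ep$--$\delta$ bookkeeping. The only points needing a little care are the measurability of $(\cdot,\cdot)_w$ and the countable generation of $\mf$ (both handled by separability of $S$), and the conversion of the ``for each $\ep$ there is $\delta$'' form of Theorem \ref{mainresult0} into an explicit modulus $\ep(\delta,D)$ that vanishes as $\delta\to0$ while still being realized by an actual compatible tree.
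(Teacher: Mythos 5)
Your proposal is correct and matches the paper's own (implicit) argument: the paper deduces Corollary~\ref{maincor} from Theorem~\ref{mainresult0} precisely by observing that the Gromov product $(\cdot,\cdot)_w$ is a similarity function bounded by $D$ and that the pointwise four-point inequality forces $\textup{Hyp}(S,\mf,\P,s)\le\delta$. Your extra care about measurability, countable generation of $\mf$ via separability, and the $\ep$--$\delta$ bookkeeping just makes explicit what the paper calls ``immediate'' (the only pedantic point is that you should normalize $\delta^*(\eta,D)$ so that $\delta^*(\eta,D)\le\eta$, which guarantees $\ep_0(\delta,D)>0$ whenever $\delta>0$, but this is cosmetic).
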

The dependence of $\delta$ on $\ep$ in Theorem \ref{mainresult0} is an important question. The proof given in this paper uses Szemer\'edi's regularity lemma~\cite{szemeredi78}, and therefore cannot be expected to yield useful bounds. It would be very interesting to figure out whether Szemer\'edi's lemma can be bypassed in the proof of Theorem \ref{mainresult0}. If that is possible, then one can at least hope to get reasonable bounds on $\delta$ in terms of $\ep$. 

To see why something like the regularity lemma may be needed, recall the triangle removal lemma of \citet{rs78}: If a simple graph on $n$ vertices has $o(n^3)$ triangles, then it is possible to delete $o(n^2)$ edges and make it triangle-free. The original proof of this result used Szemer\'edi's regularity lemma, and although we now have other approaches~\cite{fox11}, there is still no simple proof of this seemingly simple-sounding claim. Theorem \ref{mainresult0} is a result of a similar spirit, since it asserts that a space which is nearly tree-like in most places may be slightly modified to yield a space that is exactly embeddable in a tree.  

\section{Hyperbolicity and the Parisi ansatz}
In this section we study a well-known class of systems that arise in statistical physics and probability theory that are hyperbolic in the average sense but not in Gromov's sense. 

A {\it spin glass} model assigns a {\it random} probability measure $\mu_n$ on a set $\Sigma_n$, where $\Sigma_n$ is usually the hypercube $\{-1,1\}^n$ or the sphere of radius $\sqrt{n}$ centered at the origin in $\rr^n$. Throughout the rest of this section, we will assume that $\Sigma_n$ is either of these two. The specific definitions of these measures are not particularly relevant for this discussion, so we will not bother to introduce them here. The interested reader may consult~\cite{mpv87, talagrand11a, talagrand11, panchenko13b}. The measure $\mu_n$ is called the Gibbs measure, and the set $\Sigma_n$ is called the configuration space. 

An important quantity in spin glass theory is the {\it overlap} between two configurations $\sigma^1,\sigma^2\in \Sigma_n$, defined as 
\[
R_{1,2} :=\frac{1}{n}\sum_{i=1}^n \sigma_i^1\sigma_i^2\in [-1,1].
\]
The usual convention in the literature is to denote by $R_{i,j}$ the overlap between $\sigma^i$ and $\sigma^j$, where $\sigma^1,\sigma^2,\ldots$ is an i.i.d.~sequence of configurations drawn from the Gibbs measure $\mu_n$. 
It was famously conjectured by \citet{parisi79, parisi80} that certain spin glass models have the property that in the ``$n=\infty$ limit'', $R_{1,2}$ is greater than or equal to the minimum of $R_{1,3}$ and $R_{2,3}$ with probability one. This is known as the Parisi ultrametricity ansatz. Following a long line of deep contributions by various authors~\cite{ac98, gg98, aa09, talagrand06}, the Parisi conjecture was finally proved by Panchenko \cite{panchenko13} for spin glass models that satisfy a certain set of equations known as the generalized Ghirlanda--Guerra identities~\cite{gg98, panchenko10, talagrand03}. The precise statement of  Panchenko's theorem  is that in such models, for any $\ep>0$, 
\begin{align}\label{panchenko}
\lim_{n\to\infty} \E\smallavg{\1_{\{R_{1,2}\ge \min \{R_{1,3}, R_{2,3}\} - \ep\}}} = 1,
\end{align}
where $\smallavg{\cdot}$ denotes expectation with respect to the Gibbs measure $\mu_n$, $\E$ denotes expectation with respect to the randomness in $\mu_n$, and $\1_A$ denotes the function that is $1$ on the set $A$ and $0$ elsewhere.

It was predicted in a seminal paper of~\citet{mpstv84} that ultrametricity happens because the infinite volume limit of the Gibbs measure can be decomposed into ``hierarchically organized pure states''. Roughly speaking, this means that the configuration space admits a hierarchical clustering, with a number $q_\alpha\in [-1,1]$ attached to each cluster $\alpha$, so that if $\sigma^1$ and $\sigma^2$ are drawn independently from the Gibbs measure, then with high probability, $R_{1,2}\approx q_\alpha$, where $\alpha$ is the smallest cluster containing both $\sigma^1$ and $\sigma^2$ (see Figure \ref{purestate}). Here ``smallest'' means ``lowest down in the hierarchy''.

 
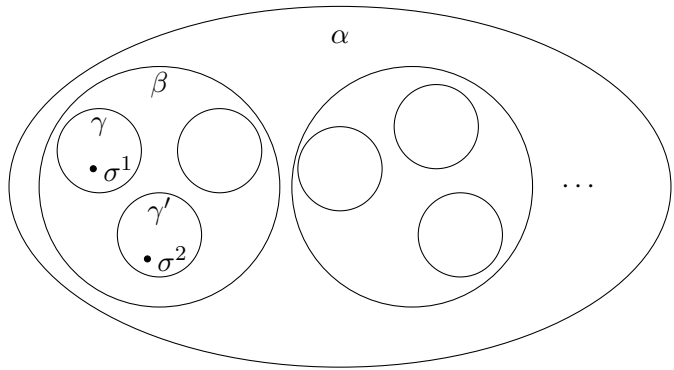
\begin{figure}
\begin{center}
\begin{tikzpicture}[scale = .8]
\draw (0,0) ellipse (5.5 and 3);
\draw (-3,0) circle [radius = 2];
\draw (1.2,0) circle [radius = 2];
\node at (4,0) {$\cdots$};
\draw (-4, .6) circle [radius = .7];
\draw (-2, .6) circle [radius = .7];
\draw (-3, -.8) circle [radius = .7];
\draw (0, .3) circle [radius = .7];
\draw (1.6, 1) circle [radius = .7];
\draw (2, -.8) circle [radius = .7];
\node at (-4, 1) {$\gamma$};
\node at (-3, 1.7) {$\beta$};
\node at (0, 2.5) {$\alpha$};
\node at (-3, -.4) {$\gamma'$};
\draw [fill] (-4.1,.3) circle [radius = 0.05] node [black,right] {$\sigma^1$};
\draw [fill] (-3.2,-1.2) circle [radius = 0.05] node [black,right] {$\sigma^2$};
\end{tikzpicture}
\caption{Hierarchical organization of pure states. Here $\alpha$, $\beta$, $\gamma$ and $\gamma'$ are hierarchically nested clusters representing various pure states, and $\sigma^1\in \gamma$, $\sigma^2\in \gamma'$. But $R_{1,2}\approx q_\beta$, since $\beta$ is the smallest cluster that contains both $\sigma^1$ and $\sigma^2$.\label{purestate}}
\end{center}
\end{figure}

It is not difficult to prove that ultrametricity implies the hierarchical organization of pure states if $R_{1,2}$ can take only finitely many values in the infinite volume limit; this, in fact, is the basis of the heuristic sketched in~\cite{mpstv84}. However, if this condition does not hold --- in which case the system is said to exhibit ``full replica symmetry breaking'' --- then it is not obvious how to establish the hierarchical organization of pure states starting from the Parisi ansatz \eqref{panchenko}.

There are two kinds of systems where the pure state picture has been rigorously established. The first is a class of spin glass models known as pure $p$-spin spherical models, where the pure state construction was given  recently by \citet{subag17}, building on the earlier contributions of~\cite{ab13, abc13, sz17, ac18}. The second is the class of models that have been shown to satisfy the generalized Ghirlanda--Guerra identities. For these models, the construction of pure states was given by \citet{panchenko13} in the infinite volume limit, and recently  by~\citet{jagannath17} in the setting of large but finite $n$. (See also the earlier works of \citet{talagrand06b, talagrand10}.)

Incidentally, the generalized Ghirlanda--Guerra identities are believed to hold in all physically interesting models that satisfy the Parisi ansatz \eqref{panchenko}. Therefore, in principle, the results of \cite{panchenko13, jagannath17} should give the pure state construction in all such models, provided that the identities can be established. However, there are other important models, such as the Sherrington--Kirkpatrick (S-K) model, where it is known that the generalized Ghirlanda--Guerra identities do not hold~\cite[Remark 2.4]{jagannath17}. In the S-K model, it is believed that the absolute value of the overlap, rather than the overlap itself, should satisfy the ultrametric property. To account for such cases, we formulate a generalized version of~\eqref{panchenko}. We will say that a sequence of spin glass models satisfy the {\it generalized Parisi ansatz} if for some bounded measurable $f:[-1,1]\to \R$, 
\begin{align}\label{panchenkogen}
\lim_{n\to\infty} \E\smallavg{\1_{\{f(R_{1,2})\ge \min \{f(R_{1,3}), f(R_{2,3})\} - \ep\}}} = 1
\end{align}
for all $\ep>0$. 
Theorem \ref{mainresult0} allows us to prove that hierarchically organized pure states can be constructed for any system that satisfies this generalized  ansatz. Since the only systems where ultrametricity has been rigorously established are systems where the pure state construction has also been proved, the result gives no immediate gain. But it is intellectually satisfying and potentially useful for the future. For example, if the generalized Parisi ansatz \eqref{panchenkogen} can be proved for the S-K model with $f(x)=|x|$, our theorem will instantly give the construction of pure states. The precise statement is as follows.
\begin{thm}\label{spinthm}
Consider any sequence of spin glass models that satisfy the generalized Parisi ultrametricity ansatz \eqref{panchenkogen} for some bounded measurable function $f$. Then there are  sequences $\ep_n$ and $\delta_n$ tending to zero, such that with probability at least $1-\ep_n$,  the following happens. There is a hierarchical clustering of the configuration space $\Sigma_n$, such that the number of clusters is finite, each cluster is measurable, and for each cluster $\alpha$ there is a number $q_\alpha$ that is a function of its the depth in the hierarchy, with the property that 
\[
\smallavg{|f(R_{1,2})-q_\alpha|}\le \delta_n,
\] 
where $\alpha = \alpha(\sigma^1, \sigma^2)$ is the smallest cluster containing two configurations $\sigma^1$ and $\sigma^2$ drawn independently from the Gibbs measure and $R_{1,2}$ is their overlap. 
\end{thm}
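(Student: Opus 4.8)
The plan is to apply Theorem~\ref{mainresult0}, for each realization of the disorder, to the probability space $(\Sigma_n,\mathcal{F}_n,\mu_n)$, where $\mathcal{F}_n$ is the Borel $\sigma$-algebra of $\Sigma_n$, equipped with a similarity function built from $f$ and the overlap, and then to translate the resulting ``$\delta$-tree-like'' conclusion into the asserted hierarchical clustering. Fix $M:=\sup_{x\in[-1,1]}|f(x)|<\infty$ and set $b:=2M$. For each $n$ define $s_n\colon\Sigma_n\times\Sigma_n\to[0,b]$ by $s_n(\sigma,\tau):=f(R(\sigma,\tau))+M$, where $R(\sigma,\tau)=\frac1n\sum_i\sigma_i\tau_i$. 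Then $s_n$ is symmetric, $\mathcal{F}_n\otimes\mathcal{F}_n$-measurable, takes values in $[0,b]$ with $b$ independent of $n$, and $\mathcal{F}_n$ is countably generated since $\Sigma_n$ is a separable metric space; so $(\Sigma_n,\mathcal{F}_n,\mu_n,s_n)$ is an admissible input for Theorem~\ref{mainresult0}. Write $H_n:=\textup{Hyp}(\Sigma_n,\mathcal{F}_n,\mu_n,s_n)$, a random variable depending on the disorder.

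The first step is to pass from the \emph{annealed} hypothesis~\eqref{panchenkogen} to a \emph{quenched} bound on $H_n$. With $\sigma^1,\sigma^2,\sigma^3$ drawn independently from $\mu_n$ and $D:=\min\{f(R_{1,3}),f(R_{2,3})\}-f(R_{1,2})$, the constant $M$ cancels, so $D=\min\{s_n(\sigma^1,\sigma^3),s_n(\sigma^2,\sigma^3)\}-s_n(\sigma^1,\sigma^2)$, and $\{D\le\ep\}=\{D_+\le\ep\}$ for every $\ep\ge 0$. Splitting the expectation defining $H_n$ at level $\ep$ and using $D_+\le b$ gives $H_n=\smallavg{D_+}\le\ep+b\,\smallavg{\1_{\{D>\ep\}}}$; taking expectation over the disorder and invoking~\eqref{panchenkogen} yields $\limsup_{n\to\infty}\E[H_n]\le\ep$ for every $\ep>0$, hence $a_n:=\E[H_n]\to 0$. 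By Markov's inequality the event $G_n:=\{H_n\le\sqrt{a_n}\}$ has probability at least $1-\sqrt{a_n}$ (the statement is vacuous when $a_n=0$).

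The second step is to apply Theorem~\ref{mainresult0} on $G_n$ and read off the clusters. Since $b$ is fixed, that theorem provides a nondecreasing function $h$ with $h(t)\to 0$ as $t\to 0^+$ such that $\textup{Hyp}<t$ forces $\textup{Tree}\le h(t)$; thus on $G_n$ one has $\textup{Tree}(\Sigma_n,\mathcal{F}_n,\mu_n,s_n)\le h(\sqrt{a_n})=:\rho_n\to 0$, so there are a rooted tree $T_n$ with root $r_n$ compatible with $(\Sigma_n,\mathcal{F}_n)$ and a number $\alpha_n\ge 0$ with $\iint|s_n(x,y)-\alpha_n(x,y)_{r_n}|\,\d\mu_n(x)\,\d\mu_n(y)\le\rho_n+n^{-1}$. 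By Definition~\ref{treedef}, $T_n$ is precisely a hierarchical clustering of $\Sigma_n$ into finitely many measurable clusters, one per internal node of $T_n$; and for leaves $x,y$ the Gromov product $(x,y)_{r_n}$ equals the number of edges of the path from $r_n$ to the lowest common ancestor $v$ of $x$ and $y$, i.e.\ the depth of the smallest cluster $\alpha(x,y)$ containing both. Setting $q_\alpha:=\alpha_n\cdot(\text{depth of }\alpha)-M$, which depends on $\alpha$ only through its depth, we get $s_n(x,y)-\alpha_n(x,y)_{r_n}=f(R(x,y))-q_{\alpha(x,y)}$ and therefore $\smallavg{|f(R_{1,2})-q_\alpha|}\le\rho_n+n^{-1}$ on $G_n$ with $\alpha=\alpha(\sigma^1,\sigma^2)$. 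Taking $\ep_n:=\sqrt{a_n}$ and $\delta_n:=\rho_n+n^{-1}$, both tending to $0$, yields the theorem.

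I expect no serious obstacle: the mathematical substance is entirely contained in Theorem~\ref{mainresult0}, and the rest is bookkeeping. The two points needing care are the annealed-to-quenched reduction --- which works because~\eqref{panchenkogen} holds for all $\ep>0$ simultaneously, so one may first fix $\ep$ in the split of $\smallavg{D_+}$, then send $n\to\infty$, then $\ep\to0$, and finally use Markov's inequality to convert convergence in mean into a high-probability event --- and the quantifier inversion turning the ``$\forall\ep\,\exists\delta$'' of Theorem~\ref{mainresult0} into an explicit modulus $h$, which is legitimate precisely because for the fixed bound $b$ that $\delta$ is strictly positive for each $\ep$. A minor check along the way is that $q_\alpha$ genuinely depends on $\alpha$ only through its depth, which is immediate once $(x,y)_{r_n}$ is identified with the depth of the lowest common ancestor of $x$ and $y$.
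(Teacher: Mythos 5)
Your proof is correct and follows the same strategy as the paper: build a bounded similarity function from $f$, pass from the annealed ansatz~\eqref{panchenkogen} to quenched smallness of $\textup{Hyp}$, invoke Theorem~\ref{mainresult0} to get a compatible tree, and read $q_\alpha$ off the Gromov product. The one noteworthy difference is your choice of transformation. The paper takes $s_n=\rho\circ f\circ R$ for an arbitrary strictly increasing continuous $\rho:\R\to[0,\infty)$ and then defines $q_\alpha:=\rho^{-1}(a_n d_\alpha)$; converting the $L^1$-bound on $|\rho(f(R_{1,2}))-a_n(\sigma^1,\sigma^2)_{r_n}|$ into an $L^1$-bound on $|f(R_{1,2})-q_\alpha|$ then tacitly requires a modulus of continuity for $\rho^{-1}$ and some care when $a_nd_\alpha$ falls outside the range of $\rho\circ f$. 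Your choice $\rho(x)=x+M$ (so $s_n=f\circ R+M$ and $q_\alpha=\alpha_n\,d_\alpha-M$) makes this conversion an exact identity, $s_n(x,y)-\alpha_n(x,y)_{r_n}=f(R(x,y))-q_{\alpha(x,y)}$, with no loss and no continuity argument, so $\delta_n$ is literally the tree discrepancy. You also make explicit the annealed-to-quenched step (split $\smallavg{D_+}\le\ep+b\smallavg{\1_{\{D>\ep\}}}$, take $\E$, then Markov) and the extraction of a modulus $h$ from the $\forall\ep\,\exists\delta$ statement of Theorem~\ref{mainresult0}, both of which the paper compresses into one-line ``consequently'' statements. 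In short: same route, cleaner specific choice, and you quietly sidestep a small technical wrinkle in the paper's own write-up.
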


Just for clarity,  we note  that in Theorem \ref{spinthm} the sequences $\ep_n$ and $\delta_n$ are deterministic, but the hierarchical clustering is a function of the Gibbs measure (and hence random). We also note that even though the number of clusters is finite, the number may grow with $n$. Theorem \ref{spinthm} is proved as a simple consequence of Theorem~\ref{mainresult0} in Section~\ref{spinproof}. 

\section{A vertex-weighted regularity lemma}
The key to proving Theorem \ref{mainresult0} is a weighted version of Szemer\'edi's regularity lemma~\cite{szemeredi78}. Although there are a number of weighted regularity lemmas in the literature (such as in \cite{alonetal10, csabapluhar14} and the very recent preprint \cite{gs19}), we could not find the exact version stated below, which is what we needed for proving Theorem \ref{mainresult0}. Therefore a complete proof is given. 

Let $G=(S,E)$ be a finite simple graph. In the following, we will adopt the convention that the set of edges $E$ is the subset of $S^2$ consisting of all $(x,y)$ such that there is an edge between $x$ and $y$. In particular, if there is an edge between $x$ and $y$, then both $(x,y)$ and $(y,x)$ belong to $E$. 

Let $\mu$ be a nonnegative measure on $S$. If $U$ and $V$ are disjoint subsets of $S$, we define the $\mu$-weighted edge-density between $U$ and $V$ as
\[
d(U,V) \coloneqq {\mu^{\otimes 2}((x,y) \in E \colon x \in U, y \in V) \over \mu(U) \mu(V) }.
\]
If the denominator is zero, $d(U,V)$ is undefined. Given $\ep>0$, a pair of disjoint sets $U,V\subset S$ will be called a $\mu$-weighted $\ep$-regular pair if for any $A\subset U$  and $B\subset V$ with $\mu(A)\ge \ep \mu(U)$ and $\mu(B)\ge \ep \mu(V)$, we have
\[
|d(A,B)-d(U,V)|\le \ep. 
\]
The following theorem is a $\mu$-weighted version of Szemer\'edi's regularity lemma. 
\begin{thm}[Vertex-weighted regularity lemma]\label{weightedfinal} Let $G = (S,E)$ a finite simple graph and let $\mu$ be a finite nonnegative measure on $S$. Let 
\[ \mu^* := \max_{x \in S} \mu(x). \]
Take any $\epsilon >0$ and any positive integer $m$. Then there is a positive real number $p(\ep,m)$ and a positive integer $M(\ep, m)$, both depending only on $\ep$ and $m$, such that if $\mu^*\le p(\ep, m)\mu(S)$, then there is a partition $S = V_0 \cup \dots \cup V_q$ with $m \leq q \leq M(\ep,m)$, such that
\begin{enumerate}
\item[\textup{(i)}] $\mu(V_0) \le \epsilon\mu(S)$,
\item[\textup{(ii)}] $\mu(V_i)>0$ and $|\mu(V_i) - \mu(V_j)| \leq \mu^*$ for all $1 \leq i, j \leq q$, and 
\item[\textup{(iii)}] all but at most $\ep q^2$ pairs $(V_i, V_j)$, $1\le i\ne j\le q$, are $\mu$-weighted $\ep$-regular, as defined above.
\end{enumerate}
\end{thm}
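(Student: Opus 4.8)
The plan is to adapt the standard Szemer\'edi energy-increment argument to the vertex-weighted setting, taking care that the weights $\mu$ replace counting measure everywhere. First I would set up the energy functional: for a partition $\mathcal{P} = \{V_1, \dots, V_q\}$ of $S$ (ignoring the exceptional class for the moment), define
\[
\me(\mathcal{P}) := \sum_{i,j} \frac{\mu(V_i)\mu(V_j)}{\mu(S)^2}\, d(V_i, V_j)^2,
\]
which is bounded above by $1$ since $d$ is a density in $[0,1]$ and the weights $\mu(V_i)\mu(V_j)/\mu(S)^2$ sum to at most $1$. The two workhorse lemmas are (a) \emph{refinement never decreases energy}: if $\mathcal{P}'$ refines $\mathcal{P}$ then $\me(\mathcal{P}') \ge \me(\mathcal{P})$ — this is a weighted conditional-expectation (Jensen) inequality applied to the random variable that assigns to a pair $(x,y)$ the density of the pair of blocks containing it; and (b) \emph{energy boost from an irregular pair}: if $(U,V)$ fails $\ep$-regularity, witnessed by $A\subset U$, $B\subset V$ with $\mu(A)\ge\ep\mu(U)$, $\mu(B)\ge\ep\mu(V)$ and $|d(A,B)-d(U,V)|>\ep$, then partitioning $U$ into $\{A, U\setminus A\}$ and $V$ into $\{B, V\setminus B\}$ increases the contribution of that pair to the energy by at least $\ep^4 \mu(U)\mu(V)/\mu(S)^2$. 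Both are proved exactly as in the unweighted case with $\mu$-probabilities in place of proportions.

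Next I would run the iteration. Start from any partition into $q_0 := m$ nonempty blocks of nearly equal $\mu$-mass; this is where the hypothesis $\mu^* \le p(\ep,m)\mu(S)$ is needed — a greedy assignment of vertices one at a time keeps block masses within $\mu^*$ of each other, and taking $p(\ep,m)$ small guarantees no block has mass exceeding, say, $\ep\mu(S)/m$, and in particular all blocks are nonempty and condition (ii) holds. At each step, if at most $\ep q^2$ pairs are irregular we stop; otherwise, for each of the more than $\ep q^2$ irregular pairs $(V_i,V_j)$ pick the witnessing sets and simultaneously refine using the common refinement generated by all these sets — each block $V_i$ gets cut by at most $2q$ sets, so the new partition has at most $q\cdot 2^{2q}$ blocks. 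By lemma (b) summed over the irregular pairs and lemma (a) applied to the rest, the energy increases by at least $\ep \cdot q^2 \cdot \ep^4 / q^2 \cdot (\text{mass factors}) \gtrsim \ep^5 / \text{const}$ per step. Since energy is bounded by $1$, the process terminates after at most $O(\ep^{-5})$ steps, giving the bound $M(\ep,m)$ via the tower $q \mapsto q 2^{2q}$ iterated a bounded number of times.

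The one genuinely new bookkeeping issue, and the step I expect to be the main obstacle, is maintaining condition (ii) — near-equal block masses with the slack $\mu^*$ — through the refinement steps, since the common-refinement blocks will have wildly different masses. The fix is the usual one, transcribed to weights: after each refinement, re-partition each block arbitrarily into sub-blocks of $\mu$-mass as close as possible to a common target $t$ (chosen as $\mu(S)/q_{\text{new}}$ rounded so that leftover vertices of total mass $\le \mu^*$ per original block can be swept into the exceptional class $V_0$), using again that individual vertex masses are at most $\mu^*$ to control the rounding error by $\mu^*$ per block. One must check that this ``equalizing'' re-partition is itself a refinement of the previous equalized partition up to moving a set of small $\mu$-mass into $V_0$, so that lemma (a) still applies modulo a controlled energy loss, and that the total mass accumulated in $V_0$ over all $O(\ep^{-5})$ steps stays below $\ep\mu(S)$ — again arranged by choosing $p(\ep,m)$ small enough that $\mu^*$ times (number of blocks ever created) times (number of steps) is $\le \ep\mu(S)$. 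With these estimates in place, conditions (i)--(iii) all hold for the final partition, and $q \ge m$ is preserved since refinement only increases the number of blocks and the initial partition already had $m$ of them.
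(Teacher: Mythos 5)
Your proposal takes a genuinely different route from the paper. The paper proves the vertex-weighted lemma via the Frieze--Kannan \emph{spectral} approach: it first reduces to rational weights, then blows each vertex $x$ up into $N\P(x)$ integer copies (the hypothesis $\mu^*\le p(\ep,m)\mu(S)$ controls the sizes of these blow-up classes), spectrally decomposes the adjacency matrix of the blown-up graph, and forms the partition from level sets of the top eigenvectors; a separate limiting argument removes the rationality assumption. You instead transcribe the \emph{energy-increment} argument directly to the measure $\mu$, without any blow-up or rationality reduction. Both are legitimate proofs of the regularity lemma, and the energy-increment route does appear to close: the conditional-Jensen refinement lemma and the $\ep^4$-boost from an irregular pair both survive verbatim when counting measure is replaced by $\mu$, and the role of the $\mu^*\le p(\ep,m)\mu(S)$ hypothesis is exactly as you say, namely to permit near-equitable sub-blocks (within $\mu^*$) at every equalization step and to keep the accumulated exceptional mass below $\ep\mu(S)$ across the $O(\ep^{-5})$ iterations. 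Your approach avoids the rational-to-real limiting step, which is a genuine simplification; the paper's spectral route has the advantage that the partition is built from explicit eigenvector level sets and that after blow-up one is essentially back to the integer setting, so no new exceptional-set bookkeeping is needed beyond the classical proof.

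Two small slips worth fixing. First, ``taking $p(\ep,m)$ small guarantees no block has mass exceeding, say, $\ep\mu(S)/m$'' cannot be right: with $m$ blocks summing to $\mu(S)$, some block has mass at least $\mu(S)/m$. What the greedy argument actually gives (and what you need) is that all $m$ blocks have mass in $[\mu(S)/m-\mu^*,\ \mu(S)/m+\mu^*]$, hence nonempty and within $\mu^*$ of one another once $p(\ep,m)<1/m$. Second, the clause about the equalized partition being ``a refinement of the previous equalized partition up to moving a set of small $\mu$-mass into $V_0$'' needs to be made precise, since the new $V_0$ absorbs both the old $V_0$ and the new remainders; the clean way is to compute the energy of the full partition including the exceptional part, note that moving mass $\delta$ into or out of the excluded set changes the energy by at most $O(\delta/\mu(S))$, and then choose the target sub-block mass $t$ so small that this loss per step is below the $\ep^5$ gain. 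These are exactly the standard bookkeeping points, and you flag them yourself, but the proposal as written leaves them at the level of intent rather than estimate.
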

The rest of this section is devoted to the proof of this theorem. We follow the spectral approach to proving Szemer\'edi's lemma, pioneered by \citet{KF} and lucidly explained in a blog entry of~\citet{taoblog}. 
If $\mu(S)=0$, there is nothing to prove. So let us assume that $\mu(S)>0$, and  normalize $\mu$ to define a probability measure:
\[
\P(A) := \frac{\mu(A)}{\mu(S)}, \ \ A\subset S. 
\]
Also let
\[
P^* := \max_{x\in S} \P(x) = \frac{\mu^*}{\mu(S)}.
\]
If we prove the theorem for $\P$ instead of $\mu$ (with $P^*$ instead of $\mu^*$), it is easy to see that it proves the theorem for $\mu$. So we will henceforth work with $\P$ instead of $\mu$. 
We will first prove Theorem~\ref{weightedfinal} in the case that $\P(x)$ is rational for all $x \in S$.
\begin{lmm}\label{weightedszem} The vertex-weighted regularity lemma  holds if $\P(x)$ is rational for each $x$. 
\end{lmm}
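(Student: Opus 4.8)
The plan is to reduce the rational case to the classical, unweighted Szemer\'edi regularity lemma by a ``blow-up'' construction. Write each $\P(x)$ as a fraction with a common denominator $N$, say $\P(x) = a_x/N$ with $a_x \in \Z_{\ge 1}$ and $\sum_{x} a_x = N$. Build an auxiliary unweighted simple graph $G'$ on $N$ vertices by replacing each $x \in S$ with a cluster $C_x$ of $a_x$ ``clones'', and joining every clone of $x$ to every clone of $y$ precisely when $(x,y) \in E$ (and putting no edges inside any $C_x$, since $G$ is simple). The point of this construction is that for any $U \subset S$, if $U' := \bigcup_{x \in U} C_x$, then $|U'| = N\,\P(U)$, and for disjoint $U, V \subset S$ the ordinary edge count between $U'$ and $V'$ in $G'$ is exactly $N^2 \,\P^{\otimes 2}((x,y)\in E : x\in U, y\in V)$; hence the unweighted density $d_{G'}(U',V')$ equals the $\P$-weighted density $d(U,V)$.

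Next I would apply the ordinary Szemer\'edi regularity lemma to $G'$ with a regularity parameter $\ep'$ and a lower bound $m$ on the number of parts; this yields a partition $V_0' \cup V_1' \cup \dots \cup V_q'$ of the clone-set with $m \le q \le M_0(\ep', m)$, equal-sized parts $V_1', \dots, V_q'$ (up to one vertex), $|V_0'| \le \ep' N$, and all but at most $\ep' q^2$ of the pairs $(V_i', V_j')$ being $\ep'$-regular in $G'$. The difficulty, and the crux of the argument, is that this partition of clones need not respect the clusters $C_x$: a given $V_i'$ may contain only some of the clones of a vertex $x$, and there is no canonical way to assign $x$ to a single part. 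The standard fix --- used in the proof that the counting/removal lemmas have ``partite'' versions, and essentially the observation behind Lemma~\ref{weightedszem} --- is that when $\mu^* = \max_x a_x$ is small compared to $N$ (which is exactly the hypothesis $\mu^* \le p(\ep,m)\mu(S)$, i.e. $P^* \le p(\ep,m)$), each cluster $C_x$ is tiny, so reassigning the clones of $x$ wholesale to whichever part currently holds the plurality of them changes each $|V_i'|$ by at most $\mu^* N$ per affected vertex and only a small total mass overall. Concretely: for each $x$ put all of $C_x$ into the part $V_{j}'$ containing the largest share of $C_x$ (breaking ties arbitrarily), obtaining a new partition that \emph{is} of the form $\bigcup_{x \in V_i} C_x$, and hence descends to a genuine partition $S = V_0 \cup V_1 \cup \dots \cup V_q$. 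One then checks that the total mass moved is $O(\ep'/\,\text{(minimum part fraction)})$ times something controllable, and that the part sizes $\P(V_i)$ differ by at most $\P^*$ after the dust settles (this is where property (ii), with the bound $\mu^*$ rather than something larger, needs care: the equal-size property of the $V_i'$ combined with the wholesale reassignment forces $|\,|V_i'| - |V_j'|\,| \le$ a bounded number of clone-moves, but to get exactly $|\mu(V_i)-\mu(V_j)| \le \mu^*$ one may instead argue directly that one can choose the assignment greedily to keep the $\P(V_i)$ as balanced as a single vertex allows).

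Finally I would transfer the regularity conclusion back. If $(V_i', V_j')$ was $\ep'$-regular in $G'$ and the reassignment moved only a $\le \beta$-fraction (in $\P$-mass) of $V_i$ and of $V_j$, then for $A \subset V_i$, $B \subset V_j$ with $\P(A) \ge \ep \P(V_i)$, $\P(B) \ge \ep \P(V_j)$, the corresponding clone-sets $A', B'$ still satisfy $|A'| \ge (\ep - \beta')|V_i'|$ etc. for a suitable $\beta'$, so provided $\ep'$ is chosen small enough relative to $\ep$ and $\beta$ is small (again guaranteed by $P^* \le p(\ep,m)$), $\ep'$-regularity of $(V_i',V_j')$ upgrades to $\ep$-regularity of $(V_i, V_j)$; the exceptional pairs remain $\le \ep q^2$ in number, possibly after enlarging $\ep'$-vs-$\ep$ slightly and absorbing into $V_0$ the at most $\ep'$-fraction of clones that lay in $V_0'$ together with any small mass we deliberately discard. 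Setting $M(\ep,m) := M_0(\ep',m)$ and choosing $p(\ep,m)$ small enough that all the ``$\beta$ small'' requirements above hold completes the proof of Lemma~\ref{weightedszem}. The main obstacle, to emphasize, is the bookkeeping in this reassignment step: making the clusters $C_x$ respected by the final partition while simultaneously preserving the mass bound in (i), the balance bound in (ii), and the regularity count in (iii) --- all of which is precisely what the smallness hypothesis $\mu^* \le p(\ep,m)\mu(S)$ is there to make possible.
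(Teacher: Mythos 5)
Your blow-up reduction is exactly the paper's starting point, and you have put your finger on the right difficulty: a regularity partition of the blown-up graph $G'$ has no reason to respect the clusters $C_x$. However, the ``plurality reassignment'' fix does not work, and this is a genuine gap. The clones of a given $x$ are mutually interchangeable vertices of $G'$ (they have identical neighborhoods), so the regularity lemma --- and in particular its equitability step --- has no mechanism to keep them together; generically, the clones of $x$ are spread roughly uniformly over all $q$ parts $V_1',\dots,V_q'$. In that situation, reassigning $C_x$ to its plurality part moves a $(1-1/q)$-fraction of $C_x$, and summing over $x$ the total mass moved is on the order of $N$, not small. The hypothesis $\mu^* \le p(\ep,m)\mu(S)$ bounds the damage \emph{per cluster}, but provides no bound on the number of clusters reassigned, so it cannot rescue this argument: the relocation destroys both the balance of part sizes and the $\ep'$-regularity (which only tolerates modifying a vanishing fraction of each $V_i'$), and no choice of $\ep'$ vs.\ $\ep$ or of $p(\ep,m)$ makes the bookkeeping close.

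The paper avoids the reassignment problem entirely by using the spectral (Frieze--Kannan) proof of the regularity lemma rather than the black-box statement. After taking the spectral decomposition $H = \sum \lambda_i u_i u_i^T$ of the adjacency matrix of $G'$ and forming the partition sets $W_j$ by bucketing the coordinates of the top eigenvectors, one observes: if $f(x)=f(y)$ (i.e.\ $x,y$ are clones of the same vertex) then $H\1_x = H\1_y$, hence $u_i(x) = u_i(y)$ for every $i$ with $\lambda_i\ne0$; so clones automatically land in the same bucket $W_j$. The spectral construction thus produces a partition that \emph{respects clusters by design}, before any cleanup is needed. The remaining equitability step is then done by sorting whole clusters $f^{-1}(x)$ into the parts, which is where the smallness of $P^*$ actually gets used --- to guarantee $m\le q\le M(\ep,m)$, to keep part sizes within $\mu^*$ of each other, and to control the irregular-pair count. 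If you want a proof along your lines, you would need to replace the black-box appeal to Szemer\'edi's lemma with an argument (spectral, or an iterated-refinement argument whose refinements are built only from clone-invariant data such as neighborhoods) that guarantees cluster-respecting parts from the outset; a post hoc reassignment cannot be controlled.
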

\begin{proof}
Note that if $\ep<\ep'$, then an $\ep$-regular partition is also an $\ep'$-regular partition. So let us assume without loss of generality that $\ep<1/4$.

Since $\P(x)$ is rational for every $x$, we can find an integer $N$ such that $K(x) := N \P(x)$ is an integer for every $x$.  
Let $[N]:= \{1,\ldots, N\}$.  Choose a map $f \colon [N] \to S$ such that $|f^{-1}(x)| = K(x)$ for every $x$, and these inverse images are disjoint. (This is possible is $\P(S)=1$.)  
Let $G_N = ([N],E_N)$ be a graph with vertices $[N]$, and $(x,y) \in E_N$ if and only if $(f(x),f(y)) \in E$.  

Let $H$ be the adjacency matrix of $G_N$.  Then $H$ has a spectral decomposition
\[ 
H = \sum_{i=1}^N \lambda_i u_iu_i^T, 
\]
where $u_i^T$ denotes the transpose of the column vector $u_i$. We will assume the $\lambda_i$'s are numbered in order of decreasing magnitude, that is,
\begin{equation}\label{lambdadec}
|\lambda_1| \geq |\lambda_2 | \geq \dots \geq |\lambda_N |. 
\end{equation}
Let $F:\Z_+\to \R_+$ be a function satisfying $F(j) > j$ for all $j$. The exact choice of $F$ will be made later, and it will depend on $\epsilon$ and $m$  (but not on anything else).  Partition the set $\{1,\ldots,N\}$ into sets of the form $\{i: z_k \le i< z_{k+1}\}$, where  $z_0=1$ and for $k\ge 1$, 
\[
z_k = \underbrace{F\circ F\circ\cdots \circ F}_{k \text{ times}}(1).
\]
Note that since $F(j)> j$ for all $j$, $z_k$ is a strictly increasing sequence. Also, since
\[ 
\tr(H^2) = \sum_{i=1}^N \lambda_i^2 = 2|E_N| \leq N^2, 
\]
there exists $k\le 128\ep^{-5}+1$ such that
\[
\sum_{z_k\le i< z_{k+1}} \lambda_i^2 \leq \frac{\epsilon^5 N^2}{128}.
\]
Consequently, there exists an integer $J$ such that $J$ is bounded by a constant that depends only on $\epsilon$ and $m$, and 
\begin{equation} 
\sum_{J\le i< F(J)} \lambda_i^2 \leq \frac{\epsilon^5 N^2}{128}. \label{eq:T2small}
\end{equation}
If $\lambda_J\ne 0$, then by \eqref{lambdadec}, $\lambda_i\ne 0$ for all $i<J$. If $\lambda_J = 0$, then again by \eqref{lambdadec}, there is some $J'\le J$ such that $\lambda_i \ne 0$ for all $i<J'$ and $\lambda_i=0$ for all $i\ge J'$. Thus, by decreasing $J$ if necessary, we can ensure that $\lambda_i\ne 0$ for all $i<J$. Henceforth, we will assume that this holds. Let
\[ 
H_1 = \sum_{i<J} \lambda_i u_i u_i^T, \quad H_2 = \sum_{J\le i< F(J)} \lambda_i u_i u_i^T, \quad H_3 = \sum_{i\ge F(J)} \lambda_i u_iu_i^T.
\]
Then the number of edges $E_N(A,B)$ between sets $A,B \subset [N]$ is
\[ E_N(A,B) = \1_A^T H_1 \1_B + \1_A^T H_2 \1_B + \1_A^T H_3 \1_B \]
where $\1_A$ is the vector that has $1$ at the coordinates that belong to $A$ and $0$ elsewhere.  
For each $i < J$, define
\[ 
W^{(i)}_0 = \left\{ y \in [N] \colon |u_i(y)| > \sqrt{\frac{2J}{\epsilon N}} \right\},
\]
where $u_i(y)$ denotes the $y^{\text{th}}$ coordinate of $u_i$. 
Then, since $u_i$ is a unit vector,
\begin{align*}
1 &= \sum_{y \in [N]} u_i(y)^2 \geq \sum_{y \in W^{(i)}_0} u_i(y)^2 \geq \frac{2J}{\epsilon N} |W_0^{(i)}|,
\end{align*}
so that $|W_0^{(i)}| \leq \epsilon N/2J$.  Thus if 
\[
W_0 := \bigcup_{i<J} W_0^{(i)},
\]
then $|W_0| \le \epsilon N/2$.  Now partition $[N] \setminus W_0^{(i)}$ as the union of $\{ W_{k}^{(i)}: |k| \leq 32J^2/\epsilon^2+1\}$, where 
\[ 
W_{k}^{(i)} = \left\{ y \in [N] \setminus W_0^{(i)}\colon u_i(y) \in \frac{\epsilon^{3/2}}{16 \sqrt{2J^3 N}} (k-1,k] \right\}. 
\]
After doing this for $i = 1,\dots, J-1$, set
\[ 
W_{k_1,\dots,k_{J-1}} = \bigcap_{i<J} W_{k_i}^{(i)}.
\]
Note that $\{ W_{k_1,\dots,k_{J-1}} \}$ is a partition of $[N] \setminus W_0$. Enumerate the partition sets as $W_1,\dots,W_r$. From the definition of the partition, it is clear that
\begin{equation}\label{rbd}
r \le \biggl(\frac{64J^2}{\epsilon^2} + 3\biggr)^J.
\end{equation}
We will use this bound on $r$ later. Now, since $H$ is the adjacency matrix of a graph on $N$ vertices, a standard result from linear algebra implies that $|\lambda_1|\le N$. Thus, for $x,y \in W_{k_1,\dots,k_{J-1}}$ and $w,z \in W_{k_1',\dots,k_{J-1}'}$,
\begin{align*}
&|\1_w^T H_1 \1_x - \1_z^T H_1 \1_y| = \left| \sum_{i<J} \lambda_i\left( u_i(w)u_i(x) - u_i(z) u_i(y) \right)  \right| \\
&\leq |\lambda_1| \sum_{i<J} \left( \left| (u_i(w)  - u_i(z)) u_i(x) \right| + \left| u_i(z)( u_i(x) - u_i(y)) \right| \right) \\
&\leq 2N \sum_{i<J} \sqrt{\frac{2J}{\epsilon N}} \left( \frac{\epsilon^{3/2}}{16\sqrt{2J^3N}} \right) \le \frac{\epsilon}{8}.
\end{align*}
For $1\le i,j\le r$, define
\begin{align}\label{dijdef}
d_{ij} := \frac{1}{|W_i||W_j|} \sum_{x \in W_i,y \in W_j} \1_x^T H_1 \1_y.
\end{align}
Then for any $A \subset W_i$ and $B \subset W_j$, the above inequality shows that
\begin{align}
\left|\1_A^T H_1 \1_B -d_{ij} |A| |B|\right| &= \biggl|\sum_{w \in A, x \in B} \1_w^T H_1 \1_x - d_{ij} |A| |B|\biggr|\nonumber\\
&= \biggl|\frac{1}{|W_i||W_j|}\sum_{\substack{w \in A, x \in B\\ z\in W_i, y\in W_j}} (\1_w^T H_1 \1_x - \1_z^T H_1 \1_y)\biggr|\nonumber\\
&\leq \frac{1}{|W_i||W_j|}\sum_{\substack{w \in A, x \in B\\ z\in W_i, y\in W_j}} |\1_w^T H_1 \1_x - \1_z^T H_1 \1_y|\nonumber\\
&\leq \frac{\epsilon}{8} |A|B|.
\label{eq:noflux}
\end{align} 
We will use this inequality later. We now claim that each $W_j$, $0\le j\le r$, is the pre-image of some subset of $S$ under the map $f$. To see this, first note that if $f(x) = f(y)$, then clearly $H\1_x = H\1_y$.  In terms of the spectral decomposition, this can be written as 
\[ \sum_{i=1}^N \lambda_i u_i(x) u_i = \sum_{i=1}^N \lambda_i u_i(y) u_i. \]
By the linear independence of the $u_i$'s, this shows that for each $i$, $\lambda_i = 0$ or $u_i(x) = u_i(y)$.  But if $i < J$, then $\lambda_i \neq 0$, and so $x$ and $y$ must belong to the same $W_k^{(i)}$. Since this holds for all $i<J$, $x$ and $y$ belong to the same $W_j$.

Next, we make the partition equitable by subdividing the $W_j$'s.  By what we just showed, $W_j$ is the union of $f^{-1}(x)$ for some set of $x\in S$. Note that for each $x$, the pre-image $|f^{-1}(x)|$ has size at most $P^* N$.  Let
\[ 
m^* = {m \over 1 - P^*m}. 
\]
If $P^*$ is sufficiently small (depending on $m$), $m^*$ is positive.  Partition $W_j$ by sorting the pre-images into subsets of size as close as possible to $\epsilon N/2(r+m^*)$ but no smaller, and one remainder set of size less than $\epsilon N/2(r+m^*)$.  So, 
\[ W_j = U_0^{(j)}\cup \biggl(\bigcup_{k\ge 1} U_k^{(j)}\biggr) \]
with
\[ |U_0^{(j)}| < {\epsilon N \over 2(r+m^*)} \]
and for $k \geq 1$,
\begin{equation}\label{eq:vkbounds}
{\epsilon N \over 2(r+m^*)} \leq |U_k^{(j)}| \leq \left( {\epsilon \over 2(r+m^*)} + P^* \right)N.
\end{equation}
The union of the remainder sets is small:
\[ \biggl|\bigcup_{j=1}^r U_0^{(j)}\biggr| \le {\epsilon r N \over 2(r+m^*)} \le  \frac{\epsilon N}{2}. \]
Define
\[ U_0 = W_0 \cup \biggl( \bigcup_{j=1}^r U_0^{(j)} \biggr) \]
as the exceptional set, and relabel the remaining partition sets $\{ U_k^{(j)} \}_{k,j}$ as $U_1,\dots,U_q$.  Then $|U_0|\le \ep N$, and hence by \eqref{eq:vkbounds}, 
\begin{equation} \frac{1-\epsilon}{\epsilon/2(r+m^*) + P^*} \leq q \leq {2(r+m^*) \over \epsilon}. \label{eq:qbds} \end{equation}
Since $r$ can be bounded by a quantity that depends only on $\epsilon$ and $m$, we can let $M(\epsilon,m)$ to be an upper bound, depending only on $m$ and $\ep$, for the quantity $2(r+m^*)/\ep$. 
Now notice that
\[  {1 - \epsilon \over \epsilon/2(r+m^*) + P^*} \geq {1-\epsilon \over \epsilon/2m^* + P^*}.   \]
Using the definition of $m^*$, we have
\begin{align*}
 {1-\epsilon \over \epsilon/2m^* + P^*} &= \biggl({1-\epsilon \over \epsilon + (2-\epsilon)P^* m}\biggr) 2m
\end{align*}
Thus,  sufficient smallness of $P^*$ (depending on $m$ and $\ep$) ensures that $q \geq m$.

By construction of $U_0,\ldots,U_q$, there is a partition $V_0,\ldots, V_q$ of $S$ such that $U_i = f^{-1}(V_i)$ for each $i$.  Note that
\[ \P(V_0) = \frac{1}{N} |U_0| \le \epsilon, \]
and for $i \geq 1$,
\begin{equation}\label{vbds}
 \frac{\epsilon}{2(r+m^*)} \leq \P(V_i) \leq \frac{\epsilon}{2(r+m^*)} + P^*, 
\end{equation}
which implies, in particular, that $|\P(V_i)-\P(V_j)|\le P^*$ for all $1\le i,j\le q$.  This also shows that $\P(V_i)>0$ for all $1\le i\le q$. 

Next, note that by \eqref{eq:T2small}, $\tr(H_2^2) \leq \epsilon^5 N^2/128$.  Thus if $H_2 = [x_{ab}]_{a,b=1}^N$, then 
\begin{align}
\frac{\epsilon^5 N^2}{128} &\geq \sum_{a,b=1}^N x_{ab}^2. \label{eq:xabbd}
\end{align}
Let $X_{ij} = \sum_{a \in U_i,b \in U_j} x_{ab}^2$, and let 
\[
 \Sigma := \biggl\{(i,j): X_{ij} > \frac{\epsilon^4}{64} |U_i||U_j| \biggr\}.
\]
Let $\nu$ be the measure on $\{1,\dots,q\}^2$ such that $\nu(i,j) = |U_i||U_j|$ for each $i$ and $j$. 
Then 
\begin{align*}
\nu\left( \Sigma \right) &= \sum_{(i,j)\in \Sigma} |U_i||U_j|\\
&\leq \frac{64}{\epsilon^4} \sum_{i,j=1}^q X_{ij} = \frac{64}{\ep^4}\sum_{i,j=1}^q \sum_{a \in U_i,b \in U_j} x_{ab}^2 \leq \frac{64}{\ep^4}\sum_{a,b = 1}^N x_{ab}^2.
\end{align*}
Thus, by~\eqref{eq:xabbd}, $\nu(\Sigma) \leq \epsilon N^2/2$.  We can use this to bound $|\Sigma|$, as follows.  By the inequalities~\eqref{eq:vkbounds} and~\eqref{eq:qbds},
\begin{align*}
\frac{1}{|U_i|} &\leq \frac{2(r+m^*)}{\ep N}\\
&\leq \frac{2(r+m^*)}{\ep N} \biggl(\frac{(\epsilon/2(r+m^*) + P^*)q}{1-\epsilon}\biggr) \\
&= \biggl( \frac{\epsilon+ 2P^*(r+m^*)}{\epsilon (1-\epsilon)} \biggr) \frac{q}{N}.
\end{align*}
Thus,
\begin{align*}
| \Sigma | &= \sum_{(i,j) \in \Sigma} { \nu(i,j) \over |U_i||U_j| } \\
&\leq \nu(\Sigma) \biggl( \frac{\epsilon + 2P^*(r+m^*)}{\epsilon (1-\epsilon)} \biggr)^2 \frac{q^2}{N^2} \leq \frac{\epsilon}{2} \biggl( \frac{\epsilon + 2P^*(r+m^*)}{\epsilon (1-\epsilon)} \biggr)^2 q^2.
\end{align*}
Recall that $r$ is bounded by a constant that depends only on $\ep$ and $m$, and that $\ep<1/4$. Thus, if $P^*$ is sufficiently small (depending on $\ep$ and $m$), this gives 
\[
|\Sigma|\le \ep q^2.
\]
Suppose that $(i,j) \notin \Sigma$.  Then for $Q \subset U_i$ and $R \subset U_j$ with $|Q| \geq \epsilon |U_i|$ and $|R| \geq \epsilon |U_j|$, the Cauchy--Schwarz inequality and the definition of $\Sigma$ imply  that
\begin{align}
 |\1_Q^T H_2 \1_R| &\le \sum_{a\in Q, b\in R} |x_{ab}|\notag\\
 &\le \sqrt{|Q||R|} \biggl(\sum_{a\in Q, b\in R} x_{ab}^2\biggr)^{1/2}\notag\\
 &\le \sqrt{|Q||R|}\biggl(\sum_{a \in U_i,b \in U_j} x_{ab}^2\biggr)^{1/2}\notag\\
 &\leq \frac{\ep^2}{8}\sqrt{|Q||R||U_i||U_j|} \le 
 \frac{\epsilon}{8} |Q||R|. \label{h2bd}
\end{align}
Next, note that for any choice of $(i,j) \in \{1,\dots,q\}^2$, and for any $Q\subset U_i$ and $R\subset U_j$, 
\begin{align*}
\1_Q^T H_3 \1_R &= \sum_{k\ge F(J)} \lambda_k \1_Q^Tu_k u_k^T\1_R.
\end{align*}
Since $\sum_{k=1}^N \lambda_k^2 \leq N^2$, and the $\lambda_k$ are in order of decreasing magnitude, we have
\[ N^2 \geq k\lambda_k^2, \]
so that $|\lambda_k| \leq N/\sqrt{k}$.  Thus,
\begin{align}
 |\1_Q^T H_3 \1_R| &\leq \frac{N}{\sqrt{F(J)}}  \sum_{k\ge F(J)} |\1_Q^Tu_k u_k^T\1_R| \notag\\
 &\leq \frac{N}{\sqrt{F(J)}} \|\1_Q\|\|\1_R\|\notag\\
 &=  \frac{N}{\sqrt{F(J)}}  \sqrt{|Q||R|}.\label{h3bd}
\end{align}
Now take any $1\le i,j\le q$. Let $k$ and $l$ be indices such that $U_i\subset W_{k}$ and $U_j\subset W_{l}$. Define $\delta_{ij}:= d_{kl}$, where $d_{kl}$ is the quantity defined in \eqref{dijdef}. 
Then by \eqref{eq:noflux}, \eqref{h2bd} and \eqref{h3bd}, we see that if  $Q \subset U_i$ and $R \subset U_j$, with $(i,j) \in \{1,\dots,q\}^2 \setminus \Sigma$, and $|Q| \geq \epsilon |U_i|$ and $|R| \geq \epsilon |U_j|$, then 
\begin{align*}
|\1_Q^T H \1_R- \delta_{ij} |Q| |R|| &\le |\1_Q^T H \1_R - \1_Q^T H_1 \1_R|+\frac{\ep}{8} |Q||R|\\
&\le |\1_Q^T H_2 \1_R|  + |\1_Q^T H_3 \1_R| +\frac{\ep}{8}|Q||R|\\
&\le  \frac{\ep}{4} |Q||R| + \frac{N}{\sqrt{F(J)}} \sqrt{|Q||R|}.
\end{align*}
Now take any $(i,j)\in \{1,\ldots, q\}^2\setminus \Sigma$, and any $A\subset V_i$ and $B\subset V_j$ with $\P(A)\ge \ep \P(V_i)$ and $\P(B) \ge \ep \P(V_j)$. Let $Q := f^{-1}(A)$ and $R:= f^{-1}(B)$. Then $Q\subset U_i$, $R\subset U_j$, $|Q|\ge \ep |U_i|$ and $|R|\ge \ep |U_j|$. Also,
\begin{align*}
\1_Q^T H \1_R &= N^2 \P(A) \P(B) d(A, B),
\end{align*}
and $|Q||R| = N^2 \P(A)\P(B)$.  Thus, the above calculations show that 
\begin{align*}
|\1_Q^T H \1_R-\delta_{ij} N^2 \P(A) \P(B)| &= |\1_Q^T H \1_R- \delta_{ij} |Q| |R||\\
&\le  \frac{\epsilon}{4}|Q||R| + \frac{N}{\sqrt{F(J)}} \sqrt{|Q||R|} \\
&= \frac{\epsilon}{4} N^2 \P(A) \P(B) + \frac{N^2}{\sqrt{F(J)}}  \sqrt{\P(A)\P(B)}.
\end{align*}
Combining the last two displays and dividing throughout by $N^2 \P(A) \P(B)$, we get
\begin{align*}
|d(A, B) - \delta_{ij}|\le  \frac{\epsilon}{4}+  \frac{1}{\sqrt{F(J)\P(A)\P(B)}}.
\end{align*}
Recalling that $\P(A) \geq \epsilon \P(V_i)$ and $\P(B) \geq \epsilon \P(V_j)$, and applying \eqref{vbds}, we get
\[ \frac{1}{\sqrt{\P(A)\P(B)}} \leq \frac{1}{\ep \sqrt{ \P(V_i) \P(V_j)}} \leq \frac{2(r+m^*)}{\epsilon^2}. \]
Now suppose $F$ is chosen in such a way that we can guarantee 
\begin{equation}\label{fcrit}
\frac{1}{\sqrt{F(J)}} \biggl( \frac{2(r+m^*)}{\epsilon^{2}} \biggr) \le \frac{\epsilon}{4}. 
\end{equation}
Then from the above bounds it will follow that 
\[
|d(A,B)-\delta_{ij}|\le \frac{\epsilon}{2}.
\]
Replacing $A$ be $V_i$ and $B$ by $V_j$, we also have $|d(V_i, V_j)-\delta_{ij}|\le \epsilon/2$. Thus, we would get
\[
|d(A,B)-d(V_i,V_j)|\le \epsilon,
\]
which would complete the proof. So we only have to guarantee \eqref{fcrit}. By the bound on $r$ from \eqref{rbd}, we see that \eqref{fcrit} holds if 
\[ F(J) \geq \frac{\left(8(64J^2/\epsilon^2+3)^{J}+8m/(1 - P^*m) \right)^2}{\epsilon^{6}}. \]
Assuming that $P^*\le 1/2m$, it is now easy to choose $F$, depending only on $\ep$ and $m$, satisfying the above criterion for every $J\in \Z_+$. 
\end{proof}
In the final step, we now drop the rationality assumption and prove Theorem \ref{weightedfinal}. 
\begin{proof}[Proof of Theorem \ref{weightedfinal}]
Enumerate $S= \{x_1,\ldots,x_n\}$ and let $p_i := \P(x_i)$.  Take any positive real number $\nu$. Let $q_1,\ldots, q_n$ be positive rational numbers such that $p_i\le q_i \le p_i+\nu$ for each $i$. Let $r_i := q_i/\sum q_j$, so that $r_1,\ldots, r_n$ are again rational, $\sum r_i = 1$, and for each $i$, 
\begin{align*}
|p_i - r_i| &\le |p_i-q_i|+ |q_i-r_i|\\
&\le \nu + q_i \left|1-\frac{1}{\sum q_j}\right|\\
&\le \nu + (1+\nu)\frac{\sum|q_j-p_j|}{\sum q_j} \\
&\le \nu +(1+\nu)\sum |q_j-p_j|\le \nu+n(1+\nu)\nu. 
\end{align*}
Define the modified weight $\P^{(\nu)}(x_i) :=  r_i$. Suppose that $P^*\le \frac{1}{2}p(\ep,m)$, where $p(\ep,m)$ is the bound on the maximum atom required in Lemma \ref{weightedszem}. Then for sufficiently small $\nu$, the above display shows that we can apply Lemma~\ref{weightedszem} to $\P^{(\nu)}$. Suppose that we get an $\ep$-regular partition $V_0^{(\nu)},\ldots, V_q^{(\nu)}$ of $S$. 
Now let $\nu \to 0$. We get a partition as above for each $\nu$. Since the number of possible partitions is finite, there is a subsequence along which the partitions stabilize for sufficiently small $\nu$. This allows us to define a limiting partition along this subsequence. Since $\P^{(\nu)}(x)\to \P(x)$ for every $x$ (by the above display), is straightforward to verify that this limiting partition is $\ep$-regular for $\P$. 
\end{proof}

\section{Preliminary steps}
\label{setup}
In this section we begin the steps towards the proof of Theorem \ref{mainresult0}. First, note that by rescaling $s$ if necessary, we may assume that $b=1$.  We will work under this assumption for the rest of the paper.

Right away, we begin by observing that the converse statement in Theorem \ref{mainresult} is very easy to prove: Take any $\delta>0$. Suppose that 
\begin{align*}
\textup{Tree}(S,\mf, \P, s)< \delta. 
\end{align*}
Then there exists a tree $T$ with root $r$, finite diameter, and set of leaves $S$, and some $\alpha\ge 0$, such that $(X,Y)_r$ is a measurable random variable and 
\[
\E|s(X,Y) - \alpha(X,Y)_r|<\delta,
\]
where $X$ and $Y$ are i.i.d.~draws from $\P$. By Markov's inequality,
\begin{align*}
\P(|s(X,Y) - \alpha(X,Y)_r|\ge \sqrt{\delta}) \le \sqrt{\delta}. 
\end{align*}
Therefore if $X$, $Y$ and $Z$ are i.i.d.~draws from $\P$, then with probability at least $1-3\sqrt{\delta}$, the quantities $|s(X,Z) - \alpha(X,Z)_r|$,  $|s(Y,Z) - \alpha(Y,Z)_r|$ and $|s(X,Y) - \alpha(X,Y)_r|$ are all bounded above by $\sqrt{\delta}$. If this happens, then
\begin{align*}
&\min\{s(X,Z), s(Y,Z)\} - s(X,Y) \\
&\le \min\{\alpha(X,Z)_r, \alpha (Y,Z)_r\} - \alpha(X,Y)_r + 2\sqrt{\delta}\\
&= \alpha (\min\{(X,Z)_r,(Y,Z)_r\} - (X,Y)_r) + 2\sqrt{\delta}.
\end{align*}
Now, since $(x,y)_r$ is a Gromov product under the graph distance on a tree, it satisfies 
\[
(x,y)_r \ge \min\{(x,z)_r, (y,z)_r\}
\]
for all $x,y,z$. Thus, we get
\[
\min\{s(X,Z), s(Y,Z)\} - s(X,Y)\le 2\sqrt{\delta}. 
\]
Recall that this happens with probability at least $1-3\sqrt{\delta}$. Also, we have assumed that $b=1$. Thus,
\begin{align*}
\textup{Hyp}(S,\mf, \P,s) &= \E(\min\{s(X,Z), s(Y,Z)\} - s(X,Y))_+\\
&\le 2\sqrt{\delta} + 3\sqrt{\delta} = 5\sqrt{\delta}. 
\end{align*}
This proves the converse part of Theorem \ref{mainresult0}. 

We now start our journey towards the proof of the main assertion of Theorem \ref{mainresult0}, namely, that  if  $\textup{Hyp}(S,\mf, \P, s)$ is small, then $\textup{Tree}(S,\mf,\P,s)$ is also small. We will first prove the following weaker theorem. At the very end of the paper, we will complete the proof of Theorem \ref{mainresult0} using this theorem. 
\begin{thm}\label{mainresult}
Assume that $S$ is a finite set, $\mf$ is the power set of $S$, $\P$ is a probability measure defined on $\mf$, and $s:S\times S\to [0,1]$ is a symmetric function. Let $P^*:= \max_{x\in S} \P(x)$. Then given any $\ep>0$, there is some $\delta>0$  depending only on $\ep$, such that if $P^*< \delta$ and $\textup{Hyp}(S, \mf, \P, s)< \delta$, then $\textup{Tree}(S,\mf,\P, s)< \ep$. 
\end{thm}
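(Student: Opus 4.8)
The plan is to deduce this from the vertex-weighted regularity lemma, in the spirit of deducing ``triangle-free after deleting few edges'' from ``few triangles''. First I would make several reductions. Set $k := \lceil 8/\ep\rceil$; for $U\in[0,1)$ put $\tau_l := (l-U)/k$ and $E_l := \{(x,y): s(x,y)\ge\tau_l\}$ for $l=1,\dots,k$, and let $G_l := (S,E_l)$, so $G_1\supseteq\cdots\supseteq G_k$. Let $s_1(x,y):=\#\{l: s(x,y)\ge\tau_l\}\in\{0,\dots,k\}$, so that $|s(x,y)-s_1(x,y)/k|\le 1/k$, and put $B_l := \P^{\otimes3}\big((X,Z)\in E_l,\ (Z,Y)\in E_l,\ (X,Y)\notin E_l\big)$. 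Since $l\mapsto\1_{\{s(x,y)\ge\tau_l\}}$ is nonincreasing, one checks $\textup{Hyp}(S,\mf,\P,s_1)=\sum_l B_l$, and a uniform average over $U$ gives $\E_U\sum_l B_l = k\,\textup{Hyp}(S,\mf,\P,s)\le k\delta$; I fix $U$ with $\sum_l B_l\le k\delta$. As $1/k<\ep/8$, it then suffices to construct a nested chain of finite partitions $\{S\}=\mathcal Q_0\ge\mathcal Q_1\ge\cdots\ge\mathcal Q_k$ of $S$ for which $s'(x,y):=\#\{l: x\text{ and }y\text{ lie in the same block of }\mathcal Q_l\}$ satisfies $\E\,|s(X,Y)-s'(X,Y)/k|<\ep$: indeed $s'$ then obeys $s'(x,y)\ge\min\{s'(x,z),s'(z,y)\}$ for every triple (because the $\mathcal Q_l$ are nested), and the chain is the hierarchy of a rooted tree $T$ compatible with $(S,\mf)$ — finite, with all blocks measurable since $S$ is finite — on which $(\cdot,\cdot)_r=s'$; taking $\alpha=1/k$ then gives $\textup{Tree}(S,\mf,\P,s)\le\E\,|s(X,Y)-s'(X,Y)/k|<\ep$.

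I would then apply the vertex-weighted regularity lemma to the graphs $G_1,\dots,G_k$. What is needed is a single partition that is $\ep'$-regular for every $G_l$ simultaneously; this follows from Theorem~\ref{weightedfinal} by running its proof for the $k$ graphs at once, the extra factor being absorbed into the constants. With $\ep'$ small, $m$ large (both functions of $\ep$), and $\delta$ small enough that $P^*<\delta$ meets the maximum-atom hypothesis, this produces an equitable partition $S=V_0\cup V_1\cup\cdots\cup V_q$ with $q$ bounded by a function of $\ep$, $\P(V_0)\le\ep'$, $\P(V_i)\asymp 1/q$, and, for each $l$, all but $\ep' q^2$ pairs $\ep'$-regular. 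Write $d_l(i,i')$ for the $\P$-weighted density of $E_l$ between $V_i$ and $V_{i'}$. The triangle-counting lemma for $\ep'$-regular pairs then says: for distinct parts $V_i,V_{i''},V_{i'}$ that are pairwise $\ep'$-regular for $G_l$, the conditional probability that three points drawn from $V_i,V_{i''},V_{i'}$ form a non-transitive triple at level $l$ equals $d_l(i,i'')d_l(i'',i')(1-d_l(i,i'))$ up to an additive $O(\ep')$.

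The crux is the following ``exact from approximate'' step; it is where regularity is essential and where the size of $S$ drops out. Fix a small $\mu=\mu(\ep)$ and call a pair \emph{clean} at level $l$ if it is $\ep'$-regular for $G_l$ and $d_l\notin[\mu,1-\mu]$. First, if an $\ep'$-regular pair $(V_i,V_{i'})$ has $d_l(i,i')\in[\mu,1-\mu]$ for some $l$, then averaging the counting-lemma identity over a random third part (most of which are pairwise $\ep'$-regular with $V_i,V_{i'}$) shows that for some such $V_{i''}$ the triple $(V_i,V_{i''},V_{i'})$ contributes $\gtrsim_\ep q^{-4}$ to $B_l$; since distinct pairs yield disjoint families of triples, the number of non-clean $\ep'$-regular pairs (over all $l$) is $\lesssim_\ep q^4\sum_l B_l\le O_\ep(q^4k\delta)$. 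Likewise, a triple of distinct parts that are pairwise $\ep'$-regular at all levels and \emph{non-transitive} at level $l$ (meaning, once the first fact is in force, $d_l(i,i''),d_l(i'',i')>1-\mu$ and $d_l(i,i')<\mu$) contributes $\gtrsim q^{-3}$ to $B_l$, so the number of such bad triples (over all $l$) is $\lesssim q^3\sum_l B_l\le O_\ep(q^3k\delta)$. Since $q$ and $k$ are already fixed functions of $\ep$, I would now pick $\delta=\delta(\ep)$ small enough that both bounds are $<1$; being nonnegative integers, both counts are then $0$. Hence every $\ep'$-regular pair is clean at every level, and no pairwise-regular triple of distinct parts is non-transitive at any level, so the rounded densities $\hat d_l(i,i'):=\1_{\{d_l(i,i')>1-\mu\}}$, defined on the $\ep'$-regular pairs, form for each $l$ a genuine partial equivalence relation on $\{1,\dots,q\}$, nested in $l$ because $E_{l+1}\subseteq E_l$ forces $d_{l+1}\le d_l$.

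It remains to produce the partitions $\mathcal Q_l$. Move into $V_0$ every part that meets more than $\sqrt{k\ep'}\,q$ pairs which are $\ep'$-irregular for some $G_l$; there are at most $\sqrt{k\ep'}\,q$ of these, so $\P(V_0)$ grows by only $O(\sqrt{k\ep'})$. On the surviving parts, the previous paragraph forces any two with $\hat d_l=1$ to have equal $l$-neighbourhoods among parts regular with both, and any two with $\hat d_l=0$ to have disjoint ones, with at most $O(\sqrt{k\ep'}\,q)$ exceptions per part coming from irregular pairs; a routine merging then yields a nested chain $\mathcal Q_1\ge\cdots\ge\mathcal Q_k$ of partitions of the surviving parts agreeing with the $\hat d_l$ off an $O(k\sqrt{k\ep'})$-measure set of pairs of points. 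Pulling this chain back to $S$, putting $\mathcal Q_0=\{S\}$, and defining $s'$ as in the first paragraph, one finds: for a pair of points in two distinct surviving parts that form a clean, agreeing pair, $\E\,|s_1(x,y)/k-s'(x,y)/k|\le\mu$ while $|s-s_1/k|\le 1/k$ pointwise; the remaining pairs — those meeting $V_0$, those within a single surviving part, and those in the exceptional set — have total $\P^{\otimes2}$-measure $\le\ep/4$ once $\ep'$ and $1/m$ are taken small, and there $|s-s'/k|\le 1$. Adding up, $\E\,|s(X,Y)-s'(X,Y)/k|\le 1/k+\mu+\ep/4<\ep$, as required. The step I expect to be the main obstacle is the ``exact from approximate'' passage of the third paragraph: forcing the reduced density pattern to be \emph{exactly} a union of equivalence relations. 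What makes it work is precisely that the regularity partition has size bounded in terms of $\ep$ alone, so $\delta$ may afterwards be shrunk to kill the integer counts — the same mechanism behind deducing the triangle removal lemma from regularity, and the reason the conclusion carries no dependence on $|S|$. A secondary nuisance is the fourth paragraph's bookkeeping: turning the clean reduced relations into honest nested partitions of $S$ in the presence of the few surviving irregular pairs, while preserving nestedness.
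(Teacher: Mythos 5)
Your plan is a genuinely different route from the paper's: you use $k$ simultaneous threshold graphs and a single joint regularity partition, and you try to extract \emph{exact} transitivity by a triangle-removal-style ``force the nonnegative integer count to zero'' argument, whereas the paper applies the regularity lemma recursively, one threshold and one cluster at a time, and builds cliques through Lemmas~\ref{regexist}--\ref{cliquelmm}. The trouble is that the crux of your argument does not go through as stated. To show that an $\ep'$-regular pair $(V_i,V_{i'})$ with $d_l(i,i')\in[\mu,1-\mu]$ forces a contribution $\gtrsim_\ep q^{-4}$ to $B_l$, you average the counting-lemma estimate $d_l(i,i'')d_l(i'',i')(1-d_l(i,i'))$ over a random third part $V_{i''}$. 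That estimate is a useful lower bound only when $d_l(i,i'')$ and $d_l(i'',i')$ are themselves bounded away from zero, and nothing in the hypotheses produces a third part with that property: it is perfectly consistent to have $d_l(i,i')=1/2$ with $d_l(i,i'')=d_l(i'',i')=0$ for every $i''\notin\{i,i'\}$ (put a quasirandom bipartite graph of density $1/2$ between $V_i$ and $V_{i'}$ at level $l$, and no cross-edges to any other part). Then the averaged quantity vanishes and no useful $V_{i''}$ exists. The conclusion you want does appear to be correct — in such a configuration $B_l$ is still $\gtrsim q^{-3}$ — but the bad triples witnessing this sit entirely inside $V_i\cup V_{i'}$, with the ``hub'' vertex in $V_i$ or $V_{i'}$ itself, and the estimate then hinges on a case analysis over the \emph{internal} edge densities of $V_i$ and $V_{i'}$, which the regularity lemma does not control and to which the counting lemma does not apply. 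So the step, as written, is wrong, and patching it requires a new idea.

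This is exactly where the paper abandons the triangle-removal template. Rather than lower-bounding the raw quantity $B_l=\P^{\otimes 3}(R_{t_l})$ by the product measure of three whole parts, the paper introduces the conditional exceptional sets $\frak{R}^1(y,z)$, $\frak{R}^2(z)$, $B(z)$, $A$ in Section~\ref{setup}, shows they have mass $\lesssim\delta_0$ via Lemmas~\ref{asmall} and~\ref{r2small}, and in Lemma~\ref{mainlmm} uses them to pick specific vertices $x_0\in V_i$ and $y_0\in V_{i'}$ such that intermediate density would force $\P(\frak{R}^1(y_0,x_0))$ or $\P^{\otimes 2}(\frak{R}^2(y_0))$ to be of order $\ep^2\P(S')^2$. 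Because those bounds are \emph{not} multiplied by $\P(x_0)\P(y_0)$, the contradiction survives $P^*\to 0$, which your $B_l$-based argument cannot deliver for triples hubbed at a single vertex. A secondary concern: the ``routine merging'' of the reduced relations $\hat d_l$ into nested honest partitions of $\{V_1,\dots,V_q\}$ in the presence of irregular pairs is, in the paper, the content of the entire clique-construction apparatus of Lemmas~\ref{regexist} through~\ref{cliquelmm}, together with the chain-transitivity Lemma~\ref{chainlmm} (transitivity along paths of length up to $\ep^{-1/2}$, not merely triples). It is one of the longest parts of the proof, and I would not take it as routine.
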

From here until the end of the proof of Theorem \ref{mainresult}, we will work under the assumptions stated above. Take any $\delta>0$ and suppose that 
\[
\textup{Hyp}(S,\mf,\P,s)<\delta.
\]
A basic step is to show that for most values of $t \in [0,1]$, the set
\begin{align}\label{rtdef}
 R_t \coloneqq \{ (x,y,z) \colon s(x,y) < t \leq \min\{ s(x,z),s(y,z)\} \} \end{align}
has small probability.  For convenience, let 
\[
\delta_0 := \delta^{1/8}.
\]
The above definition of $\delta_0$ will be fixed throughout the remainder of the proof. 
\begin{lmm}
Let $R := \{ t \colon \P^{\otimes 3}(R_t) \geq \delta_0^4 \}$. Then $\mathscr{L}(R) \leq \delta_0^4$, where $\mathscr{L}$ is Lebesgue measure.
\label{Rsmall}
\end{lmm}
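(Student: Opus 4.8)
The plan is to relate the single hyperbolicity quantity $\textup{Hyp}(S,\mf,\P,s)$ to the integral over $t$ of $\P^{\otimes 3}(R_t)$, and then apply Markov's inequality to the function $t \mapsto \P^{\otimes 3}(R_t)$. The key observation is a ``layer-cake'' identity: for a triple $(x,y,z)$, the Lebesgue measure of the set of $t$ for which $(x,y,z) \in R_t$ is exactly $(\min\{s(x,z),s(y,z)\} - s(x,y))_+$, since $(x,y,z)\in R_t$ precisely when $s(x,y) < t \le \min\{s(x,z),s(y,z)\}$. Integrating over $t \in [0,1]$ (recall we may take $b=1$) and using Fubini's theorem,
\[
\int_0^1 \P^{\otimes 3}(R_t)\,\d t = \E\bigl(\min\{s(X,Z),s(Y,Z)\} - s(X,Y)\bigr)_+ = \textup{Hyp}(S,\mf,\P,s) < \delta.
\]

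Given this identity, the lemma follows immediately from Markov's inequality applied to the nonnegative function $g(t) := \P^{\otimes 3}(R_t)$ on $[0,1]$: since $\int_0^1 g(t)\,\d t < \delta$, the set $R = \{t : g(t) \ge \delta_0^4\}$ satisfies $\mathscr{L}(R) \le \delta/\delta_0^4$. Since $\delta_0 = \delta^{1/8}$, we have $\delta_0^4 = \delta^{1/2}$, so $\delta/\delta_0^4 = \delta^{1/2} = \delta_0^4$, giving $\mathscr{L}(R) \le \delta_0^4$ as claimed.

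The only point requiring a little care is the measurability needed to invoke Fubini: one must check that the map $(t,x,y,z) \mapsto \1_{R_t}(x,y,z)$ is jointly measurable with respect to $\mathscr{L} \otimes \P^{\otimes 3}$ (equivalently, that $t \mapsto \P^{\otimes 3}(R_t)$ is measurable). This is routine, since $R_t$ is cut out by the measurable functions $s(x,y)$ and $\min\{s(x,z),s(y,z)\}$ together with the threshold $t$: the set $\{(t,x,y,z) : s(x,y) < t \le \min\{s(x,z),s(y,z)\}\}$ is the intersection of the preimage of an open set and the preimage of a closed set under measurable maps, hence measurable in the product $\sigma$-algebra. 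I do not anticipate any genuine obstacle here; the whole argument is essentially the layer-cake formula followed by Markov's inequality.
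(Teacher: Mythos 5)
Your proof is correct and is essentially identical to the paper's: both compute $\int_0^1 \P^{\otimes 3}(R_t)\,\d t = \textup{Hyp}(S,\mf,\P,s) < \delta = \delta_0^8$ via the layer-cake/Fubini identity and then apply Markov's inequality to conclude $\mathscr{L}(R)\le\delta_0^8/\delta_0^4=\delta_0^4$. The only cosmetic difference is that the paper, working at this point under the finiteness hypothesis of Theorem \ref{mainresult}, writes the Fubini step as an explicit finite sum over $S^3$, so your measurability remark, while harmless, is not needed there.
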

\begin{proof}
Define
\[ \mathscr{R}(x,y,z) = \{ r\in [0,1] \colon s(x,y) < r \leq \min\{s(x,z),s(y,z)\} \}. \]
Note that
\[ \P^{\otimes 3}(R_t) = \sum_{x,y,z \in S} \P^{\otimes 3}(x,y,z) \1_{\mathscr{R}(x,y,z)}(t). \]
Thus, 
\begin{align*}
\int_{0}^1 \P^{\otimes 3}(R_t) \d t &= \sum_{(x,y,z) \in S^3} \int_0^1 \P^{\otimes 3}(x,y,z) \1_{\mathscr{R}(x,y,z)}(t) \d t \\
&= \sum_{(x,y,z) \in S^3} \P^{\otimes 3}(x,y,z) (\min\{s(x,z),s(y,z) \} - s(x,y))_+ \\
&= \textup{Hyp}(S, \mf,\P, s)\le \delta = \delta_0^8.
\end{align*}
If $\mathscr{L}$ is Lebesgue measure on $[0,1]$, the definition of $R$ implies that
\[ \int_{0}^1 \P^{\otimes 3}(R_t) \d t \geq \delta_0^4 \mathscr{L}(R). \]
The claimed result now follows easily by combining the two displays. 
\end{proof}

Let us now fix some $\ep\in (0,1)$ and $m\ge 2$. This $\ep$ and $m$ will remain fixed throughout the rest of the proof. At various steps, we will need to assume that $\ep$ is smaller than some universal constant (such as $\ep<1/9$) or $m$ is bigger than some universal constant (such as $m\ge 20$), and we will make these assumptions without explicitly stating so. 

Having chosen $\ep$ and $m$, define  
\begin{equation}\label{kappadef}
\kappa := \max\{\ep^{1/24}, m^{-1/2}\}.
\end{equation}
Assume that $\delta_0< \kappa/2$.  Let $N$ be the largest integer such that $N\kappa < 1$. Note that $N\le 1/\kappa\le 1/\delta_0$. In particular, $N$ is bounded by a constant that depends only on $\ep$ and $m$. We will use this information later. 
By Lemma~\ref{Rsmall}, any subinterval of $[0,1]$ of length $\ge \delta_0$ intersects $R^c$. Thus, we can find a sequence $0 < t_1 < t_2 < \dots < t_N < 1$
such that for each $i$, $t_i \in R^c$ and 
\begin{equation}\label{tidef}
| t_i - i\kappa | \le \delta_0.
\end{equation}
For $y,z\in S$ and $i \in \{1,\dots, N\}$, define three sets:
\begin{align*}
&\frak{R}^1(y,z) \coloneqq \bigcup_{i=1}^N \{ x \in S \colon s(x,y) < t_i \leq \min\{ s(x,z),s(y,z) \} \}, \\
&\frak{R}^2(z) \coloneqq \bigcup_{i=1}^N \{ (x,y) \in S^2 \colon s(x,y) < t_i \leq \min\{ s(x,z),s(y,z) \}, \\
&B(z) \coloneqq \{ y \in S \colon \P( \frak{R}^1(y,z) ) > \delta_0 \}.
\end{align*}
Finally, let 
\[ A := \{ z \colon \P(B(z)) > \delta_0 \}. \]
We now prove two lemmas that will be used several times in the sequel.
\begin{lmm}\label{asmall}
Let $A$ be the set defined above. Then $\P(A)\le \delta_0$. 
\end{lmm}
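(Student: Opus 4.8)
The plan is to bound $\P(A)$ by tracing back through the nested definitions: $A$ is the bad set at the "$z$-level", $B(z)$ the bad set at the "$y$-level", and $\frak{R}^1(y,z)$ the bad set at the "$x$-level". The natural strategy is a double application of Markov's inequality, using the fact that the full triple probability $\P^{\otimes 3}$ of the relevant "bad" event is itself small — and this smallness must come from Lemma~\ref{Rsmall} together with the fact that the $t_i$'s were chosen to lie in $R^c$.

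First I would assemble the key quantity. For fixed $z$, set $\frak{R}^2(z) = \bigcup_{i=1}^N \{(x,y) : s(x,y) < t_i \le \min\{s(x,z), s(y,z)\}\}$, so that $\frak{R}^1(y,z) = \{x : (x,y) \in \frak{R}^2(z)\}$ is its $y$-section. Now estimate $\P^{\otimes 3}\big(\{(x,y,z) : (x,y) \in \frak{R}^2(z)\}\big)$. By a union bound over $i$, this is at most $\sum_{i=1}^N \P^{\otimes 3}(R_{t_i})$. Since each $t_i \in R^c$, we have $\P^{\otimes 3}(R_{t_i}) < \delta_0^4$, so the sum is at most $N \delta_0^4 \le \delta_0^3$ (using $N \le 1/\delta_0$, established just before the lemma). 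So the total mass of the bad triple event is $\le \delta_0^3$.

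Now apply Markov twice. First, in the variable $z$: let $g(z) := \P^{\otimes 2}(\{(x,y) : (x,y) \in \frak{R}^2(z)\}) = \E_z[\text{bad mass for this }z]$. We've shown $\E_z[g(z)] \le \delta_0^3$, so $\P(g(z) > \delta_0^2) \le \delta_0$ by Markov. Next, condition on a "good" $z$ with $g(z) \le \delta_0^2$; here $g(z) = \E_y[\P(\frak{R}^1(y,z))]$, so Markov in $y$ gives $\P(\{y : \P(\frak{R}^1(y,z)) > \delta_0\}) \le \delta_0$, i.e. $\P(B(z)) \le \delta_0$. Hence $\{z : \P(B(z)) > \delta_0\} \subseteq \{z : g(z) > \delta_0^2\}$, which has probability $\le \delta_0$. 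This is exactly $\P(A) \le \delta_0$.

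I expect the only mildly delicate point to be bookkeeping the powers of $\delta_0$ and confirming $N\delta_0^4 \le \delta_0^2$ suffices at each stage (we have plenty of room, since $N \le 1/\delta_0$ gives $N\delta_0^4 \le \delta_0^3 \le \delta_0^2$), plus making sure the union-over-$i$ bound correctly relates $\frak{R}^2(z)$ to the sets $R_{t_i}$ from Lemma~\ref{Rsmall} — that is, checking that $\{(x,y,z) : s(x,y) < t_i \le \min\{s(x,z),s(y,z)\}\}$ has $\P^{\otimes 3}$-measure equal to $\P^{\otimes 3}(R_{t_i})$, which is immediate from the definition \eqref{rtdef}. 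There is no real obstacle; this is a clean two-step Markov argument, and the whole point of having chosen the $t_i$ in $R^c$ with $N$ bounded is precisely to make it go through.
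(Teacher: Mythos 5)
Your argument is correct and is essentially the paper's proof, merely phrased as two applications of Markov's inequality rather than as one chain of lower bounds: the paper directly bounds $\delta_0^3 \ge \sum_{z\in A}\P(z)\sum_{y\in B(z)}\P(y)\P(\frak{R}^1(y,z)) \ge \delta_0^2\P(A)$, which is the same bookkeeping. Both rely on the same inputs — the choice of $t_i\in R^c$, Lemma~\ref{Rsmall}, and $N\le 1/\delta_0$ — and both peel back the nested definitions of $A$, $B(z)$, and $\frak{R}^1(y,z)$ in the same order.
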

\begin{proof}
By the choice of $t_i$,
$\P^{\otimes 3}(R_{t_i}) \le \delta_0^4$ for every $i$.
Since $N\le 1/\delta_0$, this gives
\begin{align*}
\P^{\otimes 3}\biggl( \bigcup_{i=1}^N R_{t_i} \biggr) \leq \delta_0^3.
\end{align*}
Thus
\begin{align*}
\delta_0^3 &\geq \sum_{z \in A } \P(z) \P^{\otimes 2}( (x,y) \colon (x,y,z)\in R_{t_i} \text{ for some $i$}  ) \\
&\geq \sum_{z \in A} \P(z)\biggl( \sum_{y \in B(z)} \P(y) \P(\frak{R}^1(y,z) )\biggr) \\
&\ge \sum_{z \in A} \P(z) \P(B(z)) \delta_0 \ge \P(A) \delta_0^2,
\end{align*}
which gives $\P(A) \le \delta_0$. 
\end{proof}
\begin{lmm}\label{r2small}
If $z\notin A$, then $\P^{\otimes 2}(\frak{R}^2(z))\le 2\delta_0$. 
\end{lmm}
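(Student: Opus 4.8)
The plan is to bound $\P^{\otimes 2}(\frak{R}^2(z))$ by splitting $\frak{R}^2(z)$ according to whether the first coordinate lands in $\frak{R}^1(y,z)$ or not. Recall that
\[
\frak{R}^2(z) = \bigcup_{i=1}^N \{(x,y)\in S^2 : s(x,y) < t_i \le \min\{s(x,z), s(y,z)\}\},
\]
and that for a fixed $y$, the set $\frak{R}^1(y,z)$ is precisely the slice $\{x : (x,y) \in \frak{R}^2(z)\}$. So by Fubini,
\[
\P^{\otimes 2}(\frak{R}^2(z)) = \sum_{y \in S} \P(y)\, \P(\frak{R}^1(y,z)) = \sum_{y \in B(z)} \P(y)\, \P(\frak{R}^1(y,z)) + \sum_{y \notin B(z)} \P(y)\, \P(\frak{R}^1(y,z)).
\]

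For the second sum, I would use the definition of $B(z)$: if $y \notin B(z)$ then $\P(\frak{R}^1(y,z)) \le \delta_0$, so that sum is at most $\delta_0 \sum_{y} \P(y) = \delta_0$. For the first sum, I would bound $\P(\frak{R}^1(y,z)) \le 1$ and use that $z \notin A$ means $\P(B(z)) \le \delta_0$, so the first sum is at most $\sum_{y \in B(z)} \P(y) = \P(B(z)) \le \delta_0$. Adding the two bounds gives $\P^{\otimes 2}(\frak{R}^2(z)) \le 2\delta_0$, as desired.

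There is really no hard part here — the lemma is a straightforward two-way split combined with the definitions of $B(z)$ and $A$, exactly parallel to (and simpler than) the nested-sum estimate used in the proof of Lemma~\ref{asmall}. The only thing to be slightly careful about is that the Fubini step is legitimate because $S$ is finite and everything in sight is a finite nonnegative sum, and that the decomposition $\frak{R}^1(y,z) = \{x : (x,y) \in \frak{R}^2(z)\}$ holds literally from the definitions (the index set $\{1,\dots,N\}$ and the defining inequality $s(x,y) < t_i \le \min\{s(x,z), s(y,z)\}$ are identical in both). So the proof is essentially a one-paragraph computation.
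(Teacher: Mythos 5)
Your proof is correct and is essentially identical to the paper's: both decompose $\P^{\otimes 2}(\frak{R}^2(z))$ via the slice identity $\frak{R}^1(y,z)=\{x:(x,y)\in\frak{R}^2(z)\}$, split the sum over $y$ according to $B(z)$ versus $B(z)^c$, bound the first part by $\P(B(z))\le\delta_0$ (using $z\notin A$) and the second by $\delta_0$ (using the definition of $B(z)$). The only difference is that you spell out the Fubini step explicitly, which the paper leaves implicit.
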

\begin{proof}
By the definition of $B(z)$,  
\begin{align*}
\P^{\otimes 2}(\frak{R}^2(z)) 
&= \sum_{y \in B(z)} \P(y)\P(\frak{R}^1(y,z))  + \sum_{y \notin B(z)} \P(y) \P( \frak{R}^1(y,z)) \\
&\leq \P(B(z)) + \delta_0.
\end{align*}
On the other hand, since $z\notin A$, $\P(B(z))\le \delta_0$. This completes the proof.
\end{proof}

\section{Formation of approximate cliques}
\label{modifyingG}
In this section we carry out the main step in the proof of Theorem \ref{mainresult}. We continue with the notations introduced in the previous section. In particular, $P^*$, $\delta_0$, $R$, $R_t$, $\frak{R}^1(y,z)$, $\frak{R}^2(z)$, $B(z)$, $A$, $\ep$, $m$, $\kappa$, $N$ and $t_1,\ldots, t_N$ remain the same as before. 

Take any nonempty set $S'\subset S\setminus A$. Take any $t\in \{t_1,\ldots, t_N\}$, and put an edge between $x,y\in S'$ if and only if $s(x,y)\ge t$. Let $E$ denote this set of edges, and let $G$ be the graph $(S',E)$. Let us continue to denote the restriction of $\P$ to $S'$ by $\P$. Note that this restriction is a measure on $S'$, but not necessarily a probability measure.  

Let $p(\epsilon, m)$ and $M(\epsilon, m)$ be as in Theorem~\ref{weightedfinal}. Throughout this section, we will assume that $\P(S')$ is sufficiently large in comparison to $P^*$ so that 
\begin{equation}\label{pps}
P^*\le \min\biggl\{p(\ep,m), \frac{1}{4M(\ep,m)}\biggr\}\P(S').
\end{equation} 
A first consequence of this assumption is that we can apply Theorem \ref{weightedfinal} to get a partition  $V_0,\ldots, V_q$ of $S'$ with the required properties. For $B', B\subset S'$, let
\[
\rho(B',B) := \P^{\otimes 2}((x,y)\in E: x \in B', y \in B),
\]
so that in the notation of Theorem \ref{weightedfinal},
\[
d(B', B) = \frac{\rho(B', B)}{\P(B)\P(B')}. 
\]
We will fix all of the above throughout the rest of this section. The main result of the section is that $G$ can be slightly modified to make it a disjoint union of cliques. We arrive at this result in several steps. First, we show that $\P(V_i)$ is appropriately close to $\P(S')/q$. 
\begin{lmm}\label{vlmm}
For each $1\le i\le q$, 
\[
\biggl|\P(V_i)-\frac{\P(S')}{q}\biggr|\le \frac{\P(S')}{2q}.
\]
In particular, $\P(V_i)\ge C(\ep,m) \P(S')$, where $C(\ep,m)$ is a positive real number that depends only on $\ep$ and $m$. 
\end{lmm}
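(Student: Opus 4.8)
\textbf{Proof proposal for Lemma \ref{vlmm}.}
The plan is to feed the output of Theorem~\ref{weightedfinal} (applied to the graph $G=(S',E)$ with the restricted measure $\P$) into a couple of elementary averaging inequalities. Recall that the partition $V_0,\ldots,V_q$ produced by that theorem satisfies $m\le q\le M(\ep,m)$, $\P(V_0)\le\ep\P(S')$, and $|\P(V_i)-\P(V_j)|\le\mu^*$ for all $1\le i,j\le q$, where $\mu^*=\max_{x\in S'}\P(x)\le P^*$. The idea is simply that ``all the $\P(V_i)$ are within $P^*$ of each other, they leave out only an $\ep$-fraction of the mass, and $P^*$ is tiny compared with $\P(S')/q$ by hypothesis \eqref{pps}''.

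First I would fix $1\le i\le q$ and sum the bound $|\P(V_i)-\P(V_j)|\le P^*$ over $j=1,\ldots,q$ to get
\[
\Bigl|\P(V_i)-\tfrac1q\textstyle\sum_{j=1}^q\P(V_j)\Bigr|\le P^*.
\]
Since $\sum_{j=1}^q\P(V_j)=\P(S')-\P(V_0)$ and $\P(V_0)\le\ep\P(S')$, the average $\tfrac1q\sum_{j=1}^q\P(V_j)$ differs from $\P(S')/q$ by at most $\ep\P(S')/q$, so by the triangle inequality
\[
\Bigl|\P(V_i)-\tfrac{\P(S')}{q}\Bigr|\le P^*+\tfrac{\ep\P(S')}{q}.
\]
Now I would invoke \eqref{pps}, which gives $P^*\le\P(S')/(4M(\ep,m))\le\P(S')/(4q)$, together with the standing assumption that $\ep$ is below a universal constant (say $\ep<1/9$, so $\ep<1/4$). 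Then $P^*+\ep\P(S')/q\le \P(S')/(4q)+\P(S')/(4q)=\P(S')/(2q)$, which is exactly the claimed inequality. The ``in particular'' part is then immediate: $\P(V_i)\ge\P(S')/q-\P(S')/(2q)=\P(S')/(2q)\ge\P(S')/(2M(\ep,m))$, so one may take $C(\ep,m)=1/(2M(\ep,m))$.

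I do not anticipate a genuine obstacle here; the lemma is a bookkeeping consequence of the equitability clause (ii) of Theorem~\ref{weightedfinal} and the smallness of $P^*$ guaranteed by \eqref{pps}. The only point requiring a little care is making sure the constants line up — i.e.\ that the $\P(S')/(4M(\ep,m))$ coming from \eqref{pps} and the factor $1-2\ep$ coming from discarding $V_0$ together beat $\P(S')/(2q)$ — which is why the hypothesis \eqref{pps} was stated with the explicit $1/(4M(\ep,m))$ slack and why we are free to shrink $\ep$.
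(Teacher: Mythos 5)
Your proposal is correct and follows essentially the same route as the paper: both rely on the equitability bound $|\P(V_i)-\P(V_j)|\le P^*$, the estimate $\P(V_0)\le\ep\P(S')$, the hypothesis \eqref{pps}, and the facts $q\le M(\ep,m)$ and $\ep<1/4$. The only cosmetic difference is that you work with the two-sided absolute-value bound directly, whereas the paper establishes the lower and upper bounds on $\P(V_i)$ separately before combining them.
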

\begin{proof}
By construction, $|\P(V_i)-\P(V_j)|\le P^*$ for all $1\le i,j\le q$. Thus, for any $1\le i\le q$,
\begin{align*}
\P(V_i) &\ge \frac{1}{q}\sum_{j=1}^q (\P(V_j)-P^*) \\
&= \frac{\P(S')-\P(V_0)}{q} - P^* \ge \frac{(1-\ep)\P(S')}{q}-P^*\\
&\ge \biggl(\frac{1-\ep}{q}-\frac{1}{4M(\ep, m)}\biggr)\P(S'),
\end{align*}
where the last inequality follows from \eqref{pps}. Similarly,
\begin{align*}
\P(V_i) &\le \frac{1}{q}\sum_{j=1}^q (\P(V_j)+P^*) \le \frac{\P(S')}{q}+P^*\\
&\le \biggl(\frac{1}{q}+\frac{1}{4M(\ep, m)}\biggr)\P(S'),
\end{align*}
Assume that  $\ep<1/4$ (which we can, by our stated convention that $\ep$ can be taken to be less than any universal constant). Since $q\le M(\ep,m)$, this completes the proof. 
\end{proof}
Next, we prove two key lemmas. The first one shows that for any regular pair $(V_i,V_j)$, $d(V_i,V_j)$ is either close to zero or close to one. 
\begin{lmm}\label{mainlmm}
There exists a number $\delta^*$ depending only on $\ep$, $m$ and $\P(S')$, such that if $\delta_0\le \delta^*$, then the following holds. If $(V_i, V_j)$ is an $\epsilon$-regular pair, and $d(V_i, V_j)\ge 3\ep$, then $d(V_i, V_j)\ge 1-2\ep$. 
\label{clusters}
\end{lmm}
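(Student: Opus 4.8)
The plan is to exploit Lemma~\ref{r2small} in a pointwise form. Write $N(z):=\{x\in S':(x,z)\in E\}$ for the neighbourhood of $z$ in $G$. Since the threshold $t$ used to build $G$ is one of the $t_i$'s and $S'\subseteq S\setminus A$, the term with $t_i=t$ in the union defining $\mathfrak{R}^2(z)$ is exactly $\{(x,y):(x,y)\notin E,\ (x,z)\in E,\ (y,z)\in E\}$, so Lemma~\ref{r2small} gives, for every $z\in S'$,
\[
\P^{\otimes 2}\bigl(\{(x,y):\ x,y\in N(z),\ (x,y)\notin E\}\bigr)\le 2\delta_0 .
\]
Thus every $N(z)$ is an \emph{approximate clique}, with at most $2\delta_0$ worth of internal non-edges; equivalently, over any vertex $v$ there is at most $2\delta_0$ worth of ``open cherries'' $\{(x,y)\in N(v)\times N(v):(x,y)\notin E\}$. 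Morally this says $G$ is close to a disjoint union of cliques, and such a structure cannot coexist with an $\ep$-regular pair of intermediate density. So I would argue by contradiction: suppose $d:=d(V_i,V_j)\in[3\ep,1-2\ep)$, set $\eta:=\delta_0^{1/4}$, and take $\delta_0$ small enough (in terms of $\ep$, $m$, $\P(S')$) that every smallness requirement below holds --- in particular $\eta<\ep/2$.

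First I would fix a witness clique inside $V_j$. Pick $x_0\in V_i$ with $\P(N(x_0)\cap V_j)\ge d\,\P(V_j)$ (possible, since the average of $\P(N(x)\cap V_j)$ over $x\in V_i$ equals $d\,\P(V_j)$) and set $B:=N(x_0)\cap V_j$; then $B$ is an approximate clique with $\P(B)\ge d\,\P(V_j)\ge 3\ep\,\P(V_j)$. The easy part is a dichotomy for $V_i^{\mathrm{hi}}:=\{x\in V_i:\P(N(x)\cap B)>(1-\eta)\P(B)\}$, which contains $x_0$: if $\P(V_i^{\mathrm{hi}})\ge\ep\,\P(V_i)$, then $\ep$-regularity of $(V_i,V_j)$ applied to the pair $(V_i^{\mathrm{hi}},B)$ forces $d(V_i^{\mathrm{hi}},B)\le d+\ep<1-\ep$, while by construction $d(V_i^{\mathrm{hi}},B)=\rho(V_i^{\mathrm{hi}},B)/(\P(V_i^{\mathrm{hi}})\P(B))\ge 1-\eta>1-\ep$ --- a contradiction. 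Hence $\P(V_i^{\mathrm{hi}})<\ep\,\P(V_i)$.

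The crux is to rule out \emph{partial} attachment to $B$, namely to show that $V_i^{\mathrm{int}}:=\{x\in V_i:\eta\,\P(B)\le\P(N(x)\cap B)\le(1-\eta)\P(B)\}$ satisfies $\P(V_i^{\mathrm{int}})<\eta\,\P(V_i)$. This is the only place where a single vertex is too light to be fed into regularity, and I would get around it by a cherry count. Let
\[
T:=\P^{\otimes 3}\bigl(\{(x,v,v'):\ x\in V_i^{\mathrm{int}},\ v,v'\in B,\ (x,v)\in E,\ (v,v')\in E,\ (x,v')\notin E\}\bigr).
\]
Every such triple is an open cherry over $v$, so the pointwise bound above gives $T\le\int_B 2\delta_0\,\d\P(v)\le 2\delta_0$. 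For the reverse bound, fix $x\in V_i^{\mathrm{int}}$: then $\P(B\setminus N(x))\ge\eta\,\P(B)$; and since $B$ is an approximate clique, all but $\sqrt{2\delta_0}$ of the mass of $v\in B$ has $\P(B\setminus N(v))\le\sqrt{2\delta_0}$, whence for such a ``good'' $v$ one has $\P\bigl(N(v)\cap(B\setminus N(x))\bigr)\ge\eta\,\P(B)-\sqrt{2\delta_0}\ge\tfrac12\eta\,\P(B)$, using $\P(B)\ge\ep\,C(\ep,m)\,\P(S')$ from Lemma~\ref{vlmm} and $\delta_0$ small; similarly the good $v$ inside $N(x)\cap B$ have mass $\ge\tfrac12\eta\,\P(B)$. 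Integrating over $x$ yields $T\ge\tfrac14\,\P(V_i^{\mathrm{int}})\,\eta^2\,\P(B)^2$, so $\P(V_i^{\mathrm{int}})\le 8\sqrt{\delta_0}/\P(B)^2<\eta\,\P(V_i)$ provided $\delta_0$ is small in terms of $\ep$, $m$, $\P(S')$.

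To conclude, split $V_i=V_i^{\mathrm{hi}}\cup V_i^{\mathrm{int}}\cup V_i^{\mathrm{lo}}$ with $V_i^{\mathrm{lo}}:=\{x:\P(N(x)\cap B)<\eta\,\P(B)\}$, bound $\P(N(x)\cap B)$ by $\P(B)$ on the first two pieces (of total mass $<(\ep+\eta)\P(V_i)$) and by $\eta\,\P(B)$ on the third, to get $d(V_i,B)=\rho(V_i,B)/(\P(V_i)\P(B))<\ep+2\eta<2\ep$; but $\ep$-regularity applied to $(V_i,B)$ gives $d(V_i,B)\ge d-\ep\ge 2\ep$, a contradiction. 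Hence $d\notin[3\ep,1-2\ep)$, which is the assertion. I expect the third paragraph to be the real obstacle: the one-line temptation ``take $x_0$, its neighbourhood is an approximate clique, done'' fails both because $\{x_0\}$ is negligible for regularity and, more fundamentally, because controlling $d(V_i,B)$ requires excluding a positive-mass family of vertices that are attached to $B$ only fractionally; it is precisely the loss factors in the cherry count that force the thresholds $3\ep$ and $1-2\ep$ and the calibration $\eta=\delta_0^{1/4}$ (with $\P(S')$ entering only through $\P(B),\P(V_i)\ge C(\ep,m)\P(S')$).
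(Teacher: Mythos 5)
Your proof is correct, and it takes a genuinely different route from the paper's. Both arguments start by picking a high-degree witness $x_0\in V_i$ and both exploit the same underlying structural fact, namely that for $z\in S'$ the neighbourhood $N(z)$ is an approximate clique because $\{(x,y):x,y\in N(z),\,(x,y)\notin E\}\subset \frak{R}^2(z)$ and Lemma~\ref{r2small} bounds $\P^{\otimes 2}(\frak{R}^2(z))\le 2\delta_0$. From there the routes diverge. The paper's proof selects a \emph{second} witness $y_0\in N_j(x_0)\cap B(x_0)^c$ (using the $\frak{R}^1$/$B(\cdot)$ machinery to ensure one exists with large $N_i(y_0)$), shows directly that $d(N_j(x_0),N_i(y_0))\ge 1-\ep$ by a union of the approximate-clique bound for $y_0$ and the pointwise $\frak{R}^1$ bound, and then invokes regularity once. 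Your proof instead argues by contradiction, never needs a second witness and never touches $\frak{R}^1$ or $B(\cdot)$: you set $B=N(x_0)\cap V_j$, split $V_i$ into high/intermediate/low attachment to $B$, eliminate $V_i^{\mathrm{hi}}$ with regularity, and eliminate $V_i^{\mathrm{int}}$ with a cherry (path-of-length-two) count whose upper bound comes from the approximate-clique property of $N(v)$ for each $v\in B$ and whose lower bound comes from $B$ itself being an approximate clique (via Markov). The trade-off is roughly this: the paper's argument is a direct density lower bound and stays close to the intuition of Figure~\ref{nnprime}, at the cost of maintaining both $\frak{R}^1$ and $\frak{R}^2$ and an extra nested pointwise selection; yours replaces the second pointwise selection by a second-moment/cherry estimate, which makes the use of hyperbolicity enter in only one form (via $\frak{R}^2$), at the cost of an extra calibration parameter $\eta=\delta_0^{1/4}$ and a slightly more elaborate bookkeeping. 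I checked the bookkeeping — the Markov step giving the ``good'' $v$'s, the two factors of $\tfrac12\eta\P(B)$, the requirement $\sqrt{2\delta_0}\le\tfrac12\eta\P(B)$, the final $\ep+2\eta<2\ep$ vs.\ $d(V_i,B)\ge d-\ep\ge 2\ep$ — and all of it goes through for $\delta_0$ small in terms of $\ep,m,\P(S')$, using $\P(B),\P(V_i)\ge C(\ep,m)\P(S')$ from Lemma~\ref{vlmm}, exactly as you say.
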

The plan of the proof is roughly as follows (see Figure \ref{nnprime} for a schematic representation). We will first find some $x_0\in V_i$ that connects to a substantial fraction of points in $V_j$, where ``substantial'' means a set of $\P$-measure greater than $C\ep\P(V_j)$ for some universal constant $C$. Call this set $N_j(x_0)$. By regularity, the edge density between $N_j(x_0)$ and $V_i$ will be substantial. This will allow us to find $y_0\in N_j(x_0)$ which connects to a substantial fraction of points in $V_i$. Call this set $N_i(y_0)$. Now take any $b\in N_i(y_0)$ and $a\in N_j(x_0)$. Since $x_0$ is a neighbor of $y_0$ and $x_0$ is also a neighbor of $a$, the small hyperbolicity of $S$ will allow us to conclude that it is highly likely that $a$ is a neighbor of $y_0$. But if that happens, then since $b$ is a neighbor of $y_0$ and $a$ is also a neighbor of $y_0$, it is highly likely that $b$ is a neighbor of $a$.  From this, we will conclude that the edge density between $N_j(x_0)$ and $N_i(y_0)$ is close to $1$. Since these sets have substantial size, regularity of $(V_i,V_j)$ will imply that $d(V_i, V_j)$ is close to $1$.

\begin{figure}
\begin{center}
\begin{tikzpicture}
\draw (-3,0) ellipse (2 and 3);
\draw (3,0) ellipse (2 and 3);
\node at (-3,2) {$V_i$};
\node at (3,2) {$V_j$};
\draw [lightgray, fill] (-3,-1) ellipse (1.5 and 1.7);
\draw [lightgray, fill] (3,-1) ellipse (1.5 and 1.7);
\draw [fill] (-3,-1) circle [radius = 0.07] node [black, above] {$x_0$};
\draw [fill] (3,-1) circle [radius = 0.07] node [black, above] {$y_0$};
\draw [fill] (-3,-2) circle [radius = 0.07] node [black, below] {$b$};
\draw [fill] (3,-2) circle [radius = 0.07] node [black, below] {$a$};
\draw (-3,-1) to (3,-1);
\draw (-3,-1) to (3,-2);
\draw (3,-1) to (-3,-2);
\draw [dashed] (3,-1) to (3, -2);
\draw [dashed] (-3,-1) to (-3, -2);
\draw [dashed] (-3,-2) to (3, -2);
\node at (-3,0.3) {$N_i(y_0)$};
\node at (3,0.3) {$N_j(x_0)$};
\end{tikzpicture}
\caption{Proof sketch for Lemma \ref{mainlmm}. The solid lines are edges that are known to be present. The dashed lines are edges that are likely to be present, due to small average hyperbolicity.\label{nnprime}}
\end{center}
\end{figure}
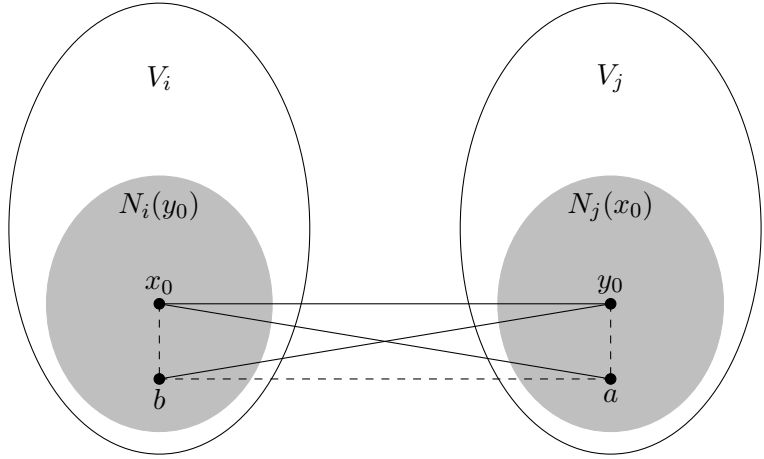

\begin{proof}[Proof of Lemma \ref{mainlmm}]
Throughout this proof, $C(\ep, m)$ denotes any positive real number that depends only on $\ep$ and $m$. The value of $C(\ep, m)$ may change from line to line.  For $x \in S'$, let $N(x)$ denote the neighborhood of $x$ in $G$.  Let $N_k(x) := N(x) \cap V_k$ for each $k$.  Let $V_i$ and $V_j$ be as in the statement of the lemma. Since $d(V_i,V_j) \geq 3\ep$, we have $\rho(V_i,V_j) \geq 3\ep\P(V_i)\P(V_j)$, and so there is some $x_0 \in V_i$ for which 
\begin{equation}\label{njvj}
 \P(N_j(x_0)) \geq 3\ep \P(V_j). 
 \end{equation}
By $\ep$-regularity,
\[ d(V_i,N_j(x_0)) \ge  d(V_i,V_j) - \ep \ge 2\ep, \]
and therefore
\begin{equation}\label{rhovivj}
 \rho( V_i, N_j(x_0) ) \ge 2\ep \P(V_i)\P(N_j(x_0)).
 \end{equation}
Now notice that
\begin{align*}
\rho( V_i, N_j(x_0) ) &= \rho(V_i, N_j(x_0) \cap B(x_0)) + \rho(V_i, N_j(x_0) \cap B(x_0)^c) \\
&\leq \rho(V_i, B(x_0)) + \rho( V_i, N_j(x_0) \cap B(x_0)^c ) \\
&\leq \P(V_i) \P(B(x_0)) + \rho( V_i, N_j(x_0) \cap B(x_0)^c ).
\end{align*}
Since $x_0 \notin A$, $\P(B(x_0)) \leq \delta_0$.  Thus
\begin{align*}
\rho(V_i,N_j(x_0)) &\leq \left( { \delta_0 \over \P(N_j(x_0))} \right) \P(V_i) \P(N_j(x_0)) \notag\\
&\qquad + \rho( V_i, N_j(x_0) \cap B(x_0)^c ),
\end{align*}
so that by \eqref{rhovivj},
\begin{align}
\biggl(2\ep  -{ \delta_0 \over \P(N_j(x_0)) }  \biggr) \P(V_i) \P(N_j(x_0)) \le \rho(V_i, N_j(x_0) \cap B(x_0)^c ).\label{rhoninj}
\end{align}
By Lemma~\ref{vlmm} and the inequality \eqref{njvj}, 
\[ \P(N_j(x_0)) \geq 3\ep \P(V_j) \geq  C(\ep,m)\P(S'). \]
Combining this with \eqref{rhoninj}, we get
\begin{align*}
\biggl( 2\ep  - { \delta_0 \over C(\ep,m)\P(S')} \biggr) \P(V_i) \P(N_j(x_0)) \le \rho(V_i, N_j(x_0) \cap B(x_0)^c ).
\end{align*}
If $\delta_0$ is sufficiently small (depending on $\ep$, $m$ and $\P(S')$), the quantity in brackets on the left is bounded below by $\ep$, and 
so there is $y_0 \in N_j(x_0) \cap B(x_0)^c$ such that
\begin{equation}\label{nivi}
 \P(N_i(y_0)) \geq \ep \P(V_i).
 \end{equation}
Recalling \eqref{njvj}, we see that by $\ep$-regularity,
\begin{equation}\label{dvivj}
  d(V_i,V_j) \ge d(N_j(x_0),N_i(y_0)) - \ep. 
  \end{equation}
The quantity $d(N_j(x_0),N_i(y_0))$ can be bounded from below as follows:
\begin{align*}
d(N_j(x_0),N_i(y_0)) &= { \P^{\otimes 2}((a,b) \in N_j(x_0) \times N_i(y_0) \colon s(a,b) \geq t ) \over \P(N_j(x_0)) \P(N_i(y_0)) } \\
&\geq { \P^{\otimes 2}( (a,b) \in N_j(x_0) \times N_i(y_0) \colon s(a,b),s(a,y_0) \geq t ) \over \P(N_j(x_0)) \P(N_i(y_0)) }. 
\end{align*}
We wish to show that the right side is close to $1$. For that purpose, we write the right side as $(1-(i))(1-(ii))$, where 
\begin{align*}
(i) &\coloneqq 1 - { \P^{\otimes 2}( (a,b) \in N_j(x_0) \times N_i(y_0) \colon s(a,b), s(a,y_0) \geq t ) \over \P( a \in N_j(x_0) \colon s(a,y_0) \geq t ) \P(N_i(y_0))} \\
&= { \P^{\otimes 2}( (a,b) \in N_j(x_0) \times N_i(y_0) \colon s(a,b) < t \leq s(a,y_0) ) \over \P( a \in N_j(x_0) \colon s(a,y_0) \geq t ) \P(N_i(y_0))}
\end{align*}
and
\begin{align*}
(ii) &\coloneqq 1 - { \P( a \in N_j(x_0) \colon s(a,y_0) \geq t ) \over \P( N_j(x_0) )} \\
&= { \P(a \in N_j(x_0) \colon s(a,y_0) < t ) \over  \P(N_j(x_0)) }.
\end{align*}
We will now show that $(i)$ and $(ii)$ are small. (To understand heuristically why they should be small, recall Figure \ref{nnprime}.) Recalling the definition of $\frak{R}^2(y_0)$, we see that
\begin{align*}
\frak{R}^2(y_0) &\supset \{ (a,b) \in N_j(x_0) \times N_i(y_0) \colon s(a,b) < t \leq \min\{ s(a,y_0),s(b,y_0) \} \}.
\end{align*}
But if $b \in N_i(y_0)$, then $b$ is a neighbor of $y_0$ in $G$ and so $s(b,y_0) \geq t$.  Thus the above display can be simplified to 
\begin{align*}
 \frak{R}^2(y_0) \supset \{ (a,b) \in N_j(x_0) \times N_i(y_0) \colon s(a,b) < t \leq s(a,y_0) \}.
 \end{align*}
Moreover, recalling that $y_0 \in N_j(x_0)$, so that $s(x_0,y_0) \geq t$, and recalling the definition of $\frak{R}^1(y,z)$, 
it is easy to see that
\begin{align}
&\P(a \in N_j(x_0) \colon s(a,y_0) < t) \notag\\
&\leq \P(a:s(a,y_0) < t \leq \min\{ s(a,x_0), s(x_0,y_0) \} )\notag\\
&\leq \P(\frak{R}^1(y_0,x_0)).\label{ibd1}
\end{align}
Thus,
\begin{align*}
\P( a \in N_j(x_0) \colon s(a,y_0) \geq t ) &\geq \P(N_j(x_0)) - \P(\frak{R}^1(y_0,x_0)).
\end{align*}
By \eqref{njvj} and \eqref{nivi}, $\P(N_j(x_0))$ and $\P(N_i(y_0))$ are both bounded below by $C(\ep,m)\P(S')$. Since $y_0\notin A$, Lemma \ref{r2small} gives
\[
\P^{\otimes 2}(\frak{R}^2(y_0)) \leq 2\delta_0.
\]
On the other hand, since $y_0\notin B(x_0)$, 
\begin{align*}
\P(\frak{R}^1(y_0, x_0) ) \leq \delta_0.
\end{align*}
Combining all of the above observations, we get
\begin{align*}
(i) &\leq { \P^{\otimes 2}(\frak{R}^2(y_0)) \over \left( \P(N_j(x_0)) - \P(\frak{R}^1(y_0,x_0))\right)\P(N_i(y_0))} \\
&\leq {2\delta_0 \over (C(\ep, m)\P(S')-\delta_0)C(\ep,m)\P(S')}.
\end{align*}
If $\delta_0$ is small enough (depending on $\ep$, $m$ and $\P(S')$), the above quantity is smaller than $\ep/2$. 
For $(ii)$, we re-use \eqref{ibd1} to get 
\begin{align*}
(ii) &\leq { \P(\frak{R}^1(y_0,x_0)) \over  \P(N_j(x_0)) }  \leq \frac{\delta_0}{C(\ep,m)\P(S')}.
\end{align*}
Again, this is smaller than $\ep/2$ if $\delta_0$ is small enough. Thus, 
\[ d(N_j(x_0),N_i(y_0)) \geq 1 - (i) - (ii)\geq 1-\ep, \]
and hence by \eqref{dvivj}, $d(V_i,V_j)\ge 1-2\ep$.
\end{proof}
Our second key lemma shows that the property of high density between regular pairs has a certain transitivity property. 
\begin{lmm}\label{chainlmm}
There exists a number $\delta^*$ depending only on $\ep$, $m$ and $\P(S')$, such that if $\delta_0\le \delta^*$, then the following holds. Suppose that $(V_a, V_b)$ is an $\epsilon$-regular pair. Suppose that $i_0, i_1,\ldots, i_k$ are distinct elements of $\{1,\ldots,q\}$ such that $i_0=a$, $i_k=b$, $d(V_{i_j}, V_{i_{j+1}})\ge 1-2\ep$ for each $0\le j\le k-1$, and $2\le k\le \ep^{-1/2}$. Then $d(V_a, V_b)\ge 1-2\ep$. 
\end{lmm}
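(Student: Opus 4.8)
The plan is to derive only the crude lower bound $d(V_a,V_b)\ge 3\ep$ and then invoke Lemma~\ref{mainlmm}: since $(V_a,V_b)$ is assumed to be an $\ep$-regular pair, that lemma upgrades $d(V_a,V_b)\ge 3\ep$ to $d(V_a,V_b)\ge 1-2\ep$. So the whole task reduces to showing that a chain of high-density (but not necessarily regular) pairs forces $V_a$ and $V_b$ to be crudely connected, and for that I would use the near-transitivity of adjacency coming from small average hyperbolicity, telescoped along the chain.

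First I would isolate a one-step transitivity statement for the graph $G$. Let $t\in\{t_1,\dots,t_N\}$ be the fixed threshold defining the edges of $G$, and recall $S'\subset S\setminus A$. The claim is: if $z\in S'$, $y\in S'$, $x\in S'\setminus\frak{R}^1(y,z)$, and $s(x,z)\ge t$ and $s(y,z)\ge t$, then $s(x,y)\ge t$ (so that $x$ and $y$ are adjacent). This is immediate from the definition of $\frak{R}^1(y,z)$: writing $t=t_i$, membership of $x$ in $\frak{R}^1(y,z)$ would require $s(x,y)<t_i\le\min\{s(x,z),s(y,z)\}$, and since the right inequality holds by hypothesis, $x\notin\frak{R}^1(y,z)$ forces $s(x,y)\ge t_i=t$. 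The only role of $A$ and the sets $B(\cdot)$ is to make exceptional sets small: $z\in S'$ implies $z\notin A$, hence $\P(B(z))\le\delta_0$; and $y\notin B(z)$ means $\P(\frak{R}^1(y,z))\le\delta_0$.

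Next I would set up a probabilistic telescoping along the chain. Take $x_0,x_1,\dots,x_k$ independent, with $x_j$ distributed as $\P$ restricted to $V_{i_j}$ and normalized (legitimate since $\P(V_{i_j})>0$ by Lemma~\ref{vlmm}), so that $d(V_a,V_b)=\P(s(x_0,x_k)\ge t)$. I would call the sample \emph{bad} if any of the following hold: $s(x_j,x_{j+1})<t$ for some $0\le j\le k-1$; or $x_0\in B(x_j)$ for some $1\le j\le k-1$; or $x_0\notin B(x_j)$ but $x_{j+1}\in\frak{R}^1(x_0,x_j)$ for some $1\le j\le k-1$. Using $d(V_{i_j},V_{i_{j+1}})\ge 1-2\ep$ for the first kind, and Lemma~\ref{vlmm} (which gives $\P(V_i)\ge C(\ep,m)\P(S')$ for every $i$) together with $\P(B(x_j))\le\delta_0$ and $\P(\frak{R}^1(x_0,x_j))\le\delta_0$ when $x_0\notin B(x_j)$ for the other two kinds, a union bound shows the sample is bad with probability at most
\[
2k\ep+\frac{2k\,\delta_0}{C(\ep,m)\,\P(S')}.
\]
On the complementary (good) event I would prove by induction on $j$ that $s(x_0,x_j)\ge t$: the case $j=1$ is part of goodness, and if $s(x_0,x_j)\ge t$ then applying the transitivity claim with $z=x_j$, $y=x_0$, $x=x_{j+1}$ — whose hypotheses are exactly that the sample is good at stage $j$ — yields $s(x_0,x_{j+1})\ge t$. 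Hence $d(V_a,V_b)\ge 1-2k\ep-2k\delta_0/(C(\ep,m)\P(S'))$. Since $k\le\ep^{-1/2}$ this is at least $1-2\sqrt\ep-2\delta_0/(\sqrt\ep\,C(\ep,m)\P(S'))$, and choosing $\delta^*$ small enough in terms of $\ep$, $m$, and $\P(S')$ (and no larger than the threshold required by Lemma~\ref{mainlmm}) makes the last term at most $\sqrt\ep$, so $d(V_a,V_b)\ge 1-3\sqrt\ep\ge 3\ep$ once $\ep$ is below a universal constant. Lemma~\ref{mainlmm} then gives $d(V_a,V_b)\ge 1-2\ep$.

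The main obstacle I anticipate is error accumulation along the chain: each of the $k$ density defects contributes $2\ep$ to the bad-event probability and $k$ may be as large as $\ep^{-1/2}$, so the telescoping by itself only delivers $d(V_a,V_b)\ge 1-O(\sqrt\ep)$, which is far from the target $1-2\ep$. The point that makes the argument go through is the two-stage structure: $1-O(\sqrt\ep)$ still lies comfortably above the $3\ep$ dichotomy threshold of Lemma~\ref{mainlmm}, so the $\ep$-regularity of $(V_a,V_b)$ can be used once, at the very end, to ``heal'' all of the accumulated loss at once. The $\delta_0$-type error terms are harmless precisely because $\delta^*$ is permitted to depend on $\P(S')$ (as well as on $\ep$ and $m$), just as in Lemmas~\ref{vlmm} and~\ref{mainlmm}.
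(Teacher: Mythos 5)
Your proposal is correct, and the two-stage structure you identify (establish a weak lower bound on $d(V_a,V_b)$ via the chain, then invoke Lemma~\ref{mainlmm} to upgrade it) is exactly the strategy in the paper. However, the way you arrive at the weak bound is genuinely different. The paper defines
\[
L := \{(x_0,\ldots,x_k)\in V_{i_0}\times\cdots\times V_{i_k}\ :\ s(x_j,x_{j+1})\ge t \text{ for all } j,\ s(x_0,x_k)<t\},
\]
observes that any such tuple contains a triple $(x_0,x_j,x_{j-1})\in R_t$, and bounds $\P^{\otimes(k+1)}(L)\le k\,\P^{\otimes 3}(R_t)\le \delta_0^4/\sqrt\ep$ directly, using only the global bound on $\P(R_t)$ from the choice of the thresholds $t_i$. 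Your version instead routes the same ``good chains propagate adjacency'' observation through the conditional exceptional sets $B(\cdot)$ and $\frak{R}^1(\cdot,\cdot)$ (which the paper introduced for Lemma~\ref{mainlmm} but does not use here), running a union bound plus an induction on $j$. Both arguments are sound, both deliver $d(V_a,V_b)\ge 1-O(\sqrt\ep)$, and both finish identically by one application of Lemma~\ref{mainlmm}. The paper's route is more economical because it bypasses $B(z)$ and $\frak{R}^1$ entirely and exploits the stronger bound $\P(R_t)\le\delta_0^4$ rather than $\P(\frak{R}^1(y,z))\le\delta_0$; yours is slightly longer but perhaps more transparent, because it makes the step-by-step propagation of adjacency explicit as an induction rather than via ``there is a minimal $j$ where the chain breaks.'' One small stylistic point: your bad-event decomposition introduces an asymmetry (singling out $x_0$ in the $B$ and $\frak{R}^1$ conditions) which is fine but avoidable; a direct union bound over $\{(x_0,x_j,x_{j-1})\in R_t\}$, as the paper does, removes it.
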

The proof of this lemma is intuitively quite simple, given that we already have Lemma \ref{mainlmm}. The small hyperbolicity ensures that if we have a path in $G$ that is not too long, then it is likely that the beginning and ending points of the path are connected by an edge. This allows us to conclude that $d(V_a, V_b)$ is close to $1$, as long as $k$ is not too large. In particular, $d(V_a, V_b)\ge 3\ep$. But then Lemma \ref{mainlmm} implies that $d(V_a, V_b)\ge 1-2\ep$. 
\begin{proof}[Proof of Lemma \ref{chainlmm}]
Take any sequence of points $x_j\in V_{i_j}$, $0\le j\le k$, such  that for each $0\le j\le k-1$, $s(x_j, x_{j+1})\ge t$, and $s(x_0,x_k)<t$. Let $L$ be the set of all such sequences ($L$ is allowed to be empty). Since $s(x_0,x_k)<t$, then there is a minimum $j$ such that $s(x_0,x_j)<t$. But $s(x_0,x_1)\ge t$. Thus, $j\ge 2$, and hence $s(x_0,x_{j-1})\ge t$. But we also know that $s(x_{j-1},x_j)\ge t$. Therefore, $(x_0,x_j, x_{j-1})\in R_t$, where $R_t$ is the set defined in \eqref{rtdef}. Since $t\notin R$ and $k\le \ep^{-1/2}$, this implies that 
\begin{align}
\sum_{(x_0,\ldots,x_k)\in L} \P(x_0)\cdots \P(x_k) &\le \sum_{j=2}^k \sum_{\substack{x_0,\ldots,x_k\in S,\\ (x_0,x_j,x_{j-1})\in R_t}}\P(x_0)\cdots \P(x_k)\notag \\
&\le k \P(R_t) \le \frac{\delta_0^4}{\sqrt{\ep}}.\label{pxk1}
\end{align}
On the other hand, let $B := V_{i_0}\times \cdots \times V_{i_k}$. Then 
\begin{align*}
&\sum_{(x_0,\ldots,x_k)\in B\setminus L} \P(x_0)\cdots \P(x_k) \\
&\le \sum_{j=0}^{k-1} \sum_{\substack{(x_0,\ldots,x_k)\in B\\s(x_j,x_{j+1}) < t}}  \P(x_0)\cdots \P(x_k)+ \sum_{\substack{(x_0,\ldots,x_k)\in B\\ s(x_0,x_k) \ge t}}  \P(x_0)\cdots \P(x_k)\\
&= \P(V_{i_0})\cdots \P(V_{i_k}) \biggl( \sum_{j=0}^{k-1} (1-d(V_{i_j}, V_{i_{j+1}})) + d(V_a, V_b)\biggr)\\
&\le \P(V_{i_0})\cdots \P(V_{i_k}) (2k\ep + d(V_a, V_b)) \\
&\le  \P(V_{i_0})\cdots \P(V_{i_k}) (2\sqrt{\ep} + d(V_a, V_b)).
\end{align*}
But by \eqref{pxk1}, 
\begin{align*}
&\sum_{(x_0,\ldots,x_k)\in B\setminus L} \P(x_0)\cdots \P(x_k) \\
&= \sum_{(x_0,\ldots,x_k)\in B} \P(x_0)\cdots \P(x_k) - \sum_{(x_0,\ldots,x_k)\in L} \P(x_0)\cdots \P(x_k)\\
&\ge \P(V_{i_0})\cdots \P(V_{i_k}) - \frac{\delta_0^4}{\sqrt{\ep}}.
\end{align*}
Combining the last two displays, we get
\begin{align*}
d(V_a, V_b) &\ge 1 - \frac{\delta_0^4}{\sqrt{\ep} \P(V_{i_0})\cdots \P(V_{i_k})} -2\sqrt{\ep}. 
\end{align*}
By Lemma \ref{vlmm}, this shows that if $\delta_0$ is sufficiently small (depending on $\ep$, $m$ and $\P(S')$), then 
\[
d(V_a, V_b) \ge 1-3\sqrt{\ep}.
\]
But then by Lemma \ref{mainlmm} (assuming that $\ep$ is sufficiently small), this gives  $d(V_a, V_b)\ge 1-2\ep$. 
\end{proof}

We now begin the main quest of this section, namely, to show that a small fraction of the edges of $G$ can be modified to transform it into a disjoint union of cliques. Throughout the rest of this section, we will assume that: 
\begin{align}
&\textit{$\delta_0$ is so small, depending on $\ep$, $m$ and $\P(S')$, that the}\notag\\
&\textit{conclusions of Lemma \ref{mainlmm} and Lemma \ref{chainlmm} hold.}\label{delta0}
\end{align}
First, we define a graph structure on $\{V_1,\ldots,V_q\}$. We will say that there is an edge between $V_i$ and $V_j$ if $(V_i, V_j)$ is $\ep$-regular and $d(V_i, V_j)\ge 1-2\ep$. In this case we will say that $V_i$ and $V_j$ are neighbors. A subset $\mn$ of $\{V_1,\ldots, V_q\}$ will be called a ``neighborhood'' if there is some $V_i\in \mn$ such that all other  elements of $\mn$ are neighbors of $V_i$.  In this case we will say that $\mn$ is a neighborhood of $V_i$. Note that $\mn$ need not contain all the neighbors of $V_i$. Let $\mfn$ be a maximal collection of disjoint neighborhoods such that each neighborhood has size $\ge \ep^{1/4}q$. Note that $\mfn$ is allowed to be empty, in case there is no neighborhood of size $\ge \ep^{1/4}q$. 
\begin{lmm}\label{regexist}
For any distinct $\mn_1,\mn_2\in \mfn$, there is some $V_i\in \mn_1$ and $V_j\in \mn_2$ such that $(V_i, V_j)$ is an $\ep$-regular pair.
\end{lmm}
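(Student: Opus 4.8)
The plan is to argue by a counting (double-counting) argument on the number of $\ep$-regular pairs, combined with the fact that each neighborhood $\mn_1,\mn_2\in\mfn$ has size at least $\ep^{1/4}q$. Suppose toward a contradiction that there exist distinct $\mn_1,\mn_2\in\mfn$ such that \emph{no} pair $(V_i,V_j)$ with $V_i\in\mn_1$ and $V_j\in\mn_2$ is $\ep$-regular. Then the number of non-$\ep$-regular ordered pairs $(V_i,V_j)$, $1\le i\ne j\le q$, is at least $|\mn_1|\cdot|\mn_2|\ge (\ep^{1/4}q)^2=\sqrt{\ep}\,q^2$ (counting, say, pairs with $V_i\in\mn_1$, $V_j\in\mn_2$; these are genuinely ordered pairs with distinct indices since $\mn_1\cap\mn_2=\emptyset$). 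On the other hand, Theorem~\ref{weightedfinal} guarantees that all but at most $\ep q^2$ pairs are $\ep$-regular. Since $\sqrt{\ep}>\ep$ for $\ep<1$ (and we are always free to assume $\ep$ is smaller than any fixed universal constant), this is a contradiction, and the lemma follows.

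The one point that needs a little care is whether the partition produced by Theorem~\ref{weightedfinal} really has this ``few irregular pairs'' property in the form $\ep q^2$, and whether $\mn_1,\mn_2$ being disjoint subsets of $\{V_1,\dots,V_q\}$ means the $|\mn_1||\mn_2|$ pairs $(V_i,V_j)$ are all among the $q(q-1)$ ordered pairs with $i\ne j$. Both are immediate: conclusion (iii) of Theorem~\ref{weightedfinal} is exactly the statement that at most $\ep q^2$ of the pairs $(V_i,V_j)$, $1\le i\ne j\le q$, fail to be $\ep$-regular, and disjointness of $\mn_1$ and $\mn_2$ gives $V_i\ne V_j$ for every $V_i\in\mn_1$, $V_j\in\mn_2$. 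So the counting goes through verbatim.

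There is essentially no hard step here; the ``obstacle'', such as it is, is purely bookkeeping of the constants: one must make sure that the threshold $\ep^{1/4}q$ in the definition of $\mfn$ is chosen precisely so that $(\ep^{1/4}q)^2=\sqrt\ep\,q^2$ strictly exceeds the irregularity budget $\ep q^2$, which holds because $\sqrt{\ep}>\ep$ for $0<\ep<1$. This is why the neighborhoods in $\mfn$ were required to have size $\ge\ep^{1/4}q$ rather than some smaller power of $\ep$ — the exponent is tuned to beat the regularity-lemma error term. Once this is observed, the proof is a two-line contradiction argument.
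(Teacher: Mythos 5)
Your proof is correct and is essentially identical to the paper's: both count the $|\mn_1||\mn_2|\ge \ep^{1/2}q^2$ cross-pairs and compare against the regularity lemma's budget of $\ep q^2$ irregular pairs, using $\sqrt{\ep}>\ep$. The only cosmetic difference is that you phrase it as a contradiction, while the paper states it directly.
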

\begin{proof}
Since $|\mn_1|$ and $|\mn_2|$ are both $\ge \ep^{1/4}q$, there are at least $\ep^{1/2} q^2$ pairs $(V_a,V_b)$ such that $V_a\in \mn_1$ and $V_b\in \mn_2$. Since the number of irregular pairs is at most $\ep q^2$, this shows that at least one of the above pairs must be $\ep$-regular. 
\end{proof}
Now define a graph structure on $\mfn$ as follows. Say that two neighborhoods $\mn_1,\mn_2\in \mfn$ are connected by an edge if there exists $V_i\in \mn_1$ and $V_j\in \mn_2$ such that $V_i$ and $V_j$ are neighbors (in the sense defined above). 
\begin{lmm}\label{firstclique}
Under the graph structure defined above, $\mfn$ is a disjoint union of cliques.
\end{lmm}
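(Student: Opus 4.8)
The plan is to show that the adjacency relation on $\mfn$ is transitive: since this relation is symmetric and has no self-loops, transitivity forces every connected component to be a clique, which is exactly the assertion. So let $\mn_1,\mn_2,\mn_3\in\mfn$, suppose $\mn_1$ is connected to $\mn_2$ by an edge and $\mn_2$ is connected to $\mn_3$ by an edge, and assume the three are distinct (the cases where two of them coincide being trivial). We must produce $V_p\in\mn_1$ and $V_r\in\mn_3$ that are neighbors.

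First I would apply Lemma~\ref{regexist} to fix $V_p\in\mn_1$ and $V_r\in\mn_3$ with $(V_p,V_r)$ an $\ep$-regular pair; note $V_p\ne V_r$ because $\mn_1$ and $\mn_3$ are disjoint. It then suffices to show $d(V_p,V_r)\ge 1-2\ep$, and for this I would build a short chain of neighbor-edges from $V_p$ to $V_r$ and invoke Lemma~\ref{chainlmm}. Write $V_{c_k}$ for a center of $\mn_k$ (so every other member of $\mn_k$ is a neighbor of $V_{c_k}$), let $V_{a_1}\in\mn_1$, $V_{b_1}\in\mn_2$ be a neighbor pair witnessing the edge $\mn_1\mn_2$, and let $V_{a_2}\in\mn_2$, $V_{b_2}\in\mn_3$ witness the edge $\mn_2\mn_3$. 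Consider the sequence
\[
V_p,\; V_{c_1},\; V_{a_1},\; V_{b_1},\; V_{c_2},\; V_{a_2},\; V_{b_2},\; V_{c_3},\; V_r .
\]
Every consecutive pair consists of neighbors: the pairs $(V_p,V_{c_1})$, $(V_{c_1},V_{a_1})$, $(V_{b_1},V_{c_2})$, $(V_{c_2},V_{a_2})$, $(V_{b_2},V_{c_3})$, $(V_{c_3},V_r)$ because a center is a neighbor of every other member of its neighborhood, and $(V_{a_1},V_{b_1})$, $(V_{a_2},V_{b_2})$ by the choice of witnessing pairs. Coincidences among these nine vertices can occur only within one of the three groups $\{V_p,V_{c_1},V_{a_1}\}\subset\mn_1$, $\{V_{b_1},V_{c_2},V_{a_2}\}\subset\mn_2$, $\{V_{b_2},V_{c_3},V_r\}\subset\mn_3$, since these groups are pairwise disjoint. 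Deleting repeated entries (standard cycle removal: if a vertex recurs, excise the loop between the two occurrences; consecutive pairs of the pruned sequence are still neighbor-pairs because the last edge of the excised loop is replaced by an edge out of the recurring vertex) yields a sequence $V_p=V_{i_0},V_{i_1},\dots,V_{i_k}=V_r$ of distinct $V$'s with $d(V_{i_j},V_{i_{j+1}})\ge 1-2\ep$ for all $j$ and $k\le 8$. If $k=1$, then $V_p$ and $V_r$ are already neighbors and we are done; otherwise $2\le k\le 8\le\ep^{-1/2}$ (assuming $\ep$ small, e.g. $\ep<1/64$), so Lemma~\ref{chainlmm} applies with $a=i_0$ and $b=i_k$ and gives $d(V_p,V_r)\ge 1-2\ep$. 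Combined with the $\ep$-regularity of $(V_p,V_r)$, this makes $V_p$ and $V_r$ neighbors, so $\mn_1$ is connected to $\mn_3$ by an edge.

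There is no genuinely hard step; the only points requiring care are the bookkeeping to extract an honest simple path from the nine-term sequence (checking that pruning preserves the neighbor property of consecutive pairs) and the numerical verification that the pruned chain has length at most $\ep^{-1/2}$, which is automatic since it always has at most nine vertices. Once transitivity is established, the conclusion that $\mfn$ is a disjoint union of cliques follows at once: within any connected component, a short induction along a path, applying transitivity at each step, shows that any two distinct vertices are adjacent.
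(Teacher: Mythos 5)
Your proof is correct and follows essentially the same route as the paper: fix an $\ep$-regular pair $(V_p,V_r)$ using Lemma~\ref{regexist}, string together the centers of the three neighborhoods and the witnessing neighbor-pairs into a short chain of neighbor-edges, and close the chain with Lemma~\ref{chainlmm}. The one place you are more careful than the paper is the pruning step: Lemma~\ref{chainlmm} requires the indices $i_0,\dots,i_k$ to be distinct, and the paper asserts without comment that the nine-term sequence ``is a path,'' whereas you explicitly observe that any coincidences must occur within a single $\mn_k$ and can be excised by the standard walk-to-path reduction while preserving the neighbor property of consecutive terms --- a genuine, if small, tightening of the argument.
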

\begin{proof}
For distinct $\mn_1,\mn_2,\mn_3\in \mfn$, we have to show that if $\mn_1$ is a neighbor of $\mn_2$, and $\mn_3$ is a neighbor of $\mn_2$, then $\mn_3$ is a neighbor of $\mn_1$. This will imply that $\mfn$ is a disjoint union of cliques. 

Accordingly, let $V_i\in \mn_1$ and $V_j\in \mn_2$ be neighbors, and let $V_k\in \mn_2$ and $V_l\in \mn_3$ be neighbors.  By Lemma \ref{regexist}, there is an $\ep$-regular pair $(V_a, V_b)$ such that $V_a\in \mn_1$ and $V_b\in \mn_3$. Suppose that $\mn_i$ is a neighborhood of $V_{t_i}$, for $i=1,2,3$. Then the sequence $V_a, V_{t_1}, V_i, V_j, V_{t_2}, V_k, V_l, V_{t_3}, V_b$ is a path in the graph defined on $\{V_1,\ldots,V_q\}$ (see Figure~\ref{fcfig}). Since $(V_a, V_b)$ is $\ep$-regular, Lemma \ref{chainlmm} implies that $d(V_a, V_b)\ge 1-2\ep$. In other words, $V_a$ and $V_b$ are neighbors. Thus, $\mn_1$ is a neighbor of $\mn_3$. 
\end{proof}
\begin{figure}
\begin{center}
\begin{tikzpicture}
\draw (-3,0) ellipse (1.3 and 3);
\draw (0,0) ellipse (1.3 and 3);
\draw (3,0) ellipse (1.3 and 3);
\draw [fill] (-3, 0) circle [radius = 0.07] node [black,right] {$V_{t_1}$};
\draw [fill] (0, 0) circle [radius = 0.07] node [black,below] {$V_{t_2}$};
\draw [fill] (3, 0) circle [radius = 0.07] node [black,right] {$V_{t_3}$};
\draw [fill] (-3, 2) circle [radius = 0.07] node [black,above] {$V_i$};
\draw [fill] (0, 2) circle [radius = 0.07] node [black,above] {$V_j$};
\draw [fill] (3, 1) circle [radius = 0.07] node [black,above] {$V_l$};
\draw [fill] (.7, 1) circle [radius = 0.07] node [black,above] {$V_k$};
\draw [fill] (-2.5, -2) circle [radius = 0.07] node [black,below] {$V_a$};
\draw [fill] (2.5, -2) circle [radius = 0.07] node [black,below] {$V_b$};
\draw (-2.5,-2) to (-3,0) to (-3,2) to (0,2) to (0,0) 
to (.7,1) to (3,1) to (3,0) to (2.5, -2);
\draw [dashed] (-2.5,-2) to (2.5, -2);
\node at (-3, -3.5) {$\mn_1$};
\node at (0, -3.5) {$\mn_2$};
\node at (3, -3.5) {$\mn_3$};
\end{tikzpicture}
\caption{Illustration of the proof of Lemma \ref{firstclique}. The solid lines are known to be edges in the graph defined on $\{V_1,\ldots, V_q\}$. We deduce that the dashed line is also an edge, by invoking Lemma \ref{chainlmm}. \label{fcfig}}
\end{center}
\end{figure}
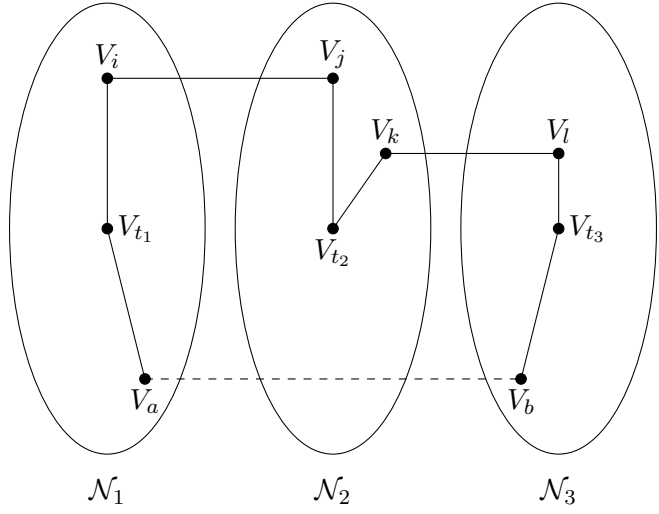
Take each clique in $\mfn$, and take the union of its elements. This yields a new collection $\mfc$ of disjoint subsets of $\{V_1,\ldots, V_q\}$. 
\begin{lmm}\label{csize}
We have $|\mfc|\le \ep^{-1/4}$. 
\end{lmm}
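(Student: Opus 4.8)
The plan is to exploit the fact that $\mathfrak{C}$ is built from pairwise disjoint sets, each of which is fairly large, and then apply a simple counting (pigeonhole) argument inside the $q$-element ground set $\{V_1,\ldots,V_q\}$.

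First I would recall the construction: by Lemma \ref{firstclique}, $\mathfrak{N}$ decomposes into a disjoint union of cliques, and $\mathfrak{C}$ is obtained by replacing each clique by the union of the neighborhoods belonging to it. Since $\mathfrak{N}$ is, by definition, a collection of \emph{pairwise disjoint} subsets of $\{V_1,\ldots,V_q\}$, and the cliques of $\mathfrak{N}$ form a partition of $\mathfrak{N}$, the resulting sets in $\mathfrak{C}$ are themselves pairwise disjoint subsets of $\{V_1,\ldots,V_q\}$. In particular $\sum_{\mathcal{C}\in\mathfrak{C}}|\mathcal{C}|\le q$.

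Next I would bound each $|\mathcal{C}|$ from below. Every clique of $\mathfrak{N}$ is nonempty, so it contains at least one neighborhood $\mathcal{N}\in\mathfrak{N}$, and by the defining property of $\mathfrak{N}$ every such neighborhood satisfies $|\mathcal{N}|\ge\ep^{1/4}q$. Since the corresponding element of $\mathfrak{C}$ contains $\mathcal{N}$, we get $|\mathcal{C}|\ge\ep^{1/4}q$ for every $\mathcal{C}\in\mathfrak{C}$.

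Combining the two facts,
\[
\ep^{1/4}q\,|\mathfrak{C}|\ \le\ \sum_{\mathcal{C}\in\mathfrak{C}}|\mathcal{C}|\ \le\ q,
\]
so $|\mathfrak{C}|\le\ep^{-1/4}$, as claimed. There is no real obstacle here; the only thing to be careful about is that disjointness of the sets in $\mathfrak{C}$ is inherited from the disjointness of the neighborhoods in $\mathfrak{N}$ together with the fact that the cliques partition $\mathfrak{N}$, which is exactly what Lemma \ref{firstclique} provides.
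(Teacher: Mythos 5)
Your proof is correct and coincides with the paper's: both argue that the elements of $\mfc$ are pairwise disjoint subsets of $\{V_1,\ldots,V_q\}$ each of size at least $\ep^{1/4}q$, and conclude by the pigeonhole bound $|\mfc|\,\ep^{1/4}q \le q$. You simply spell out in slightly more detail why the disjointness and the size lower bound hold, which the paper leaves implicit.
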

\begin{proof}
Simply note that each $\mc\in \mfc$ has size at least $\ep^{1/4}q$, these sets are disjoint, and their union is a subset of $\{V_1,\ldots, V_q\}$. Thus, $|\mfc|\ep^{1/4}q\le q$.
\end{proof}
\begin{lmm}\label{c1c2}
If $V_i\in \mc_1$ and $V_j\in \mc_2$ for two distinct elements $\mc_1$ and $\mc_2$ of $\mfc$, then $V_i$ and $V_j$ are not neighbors. On the other hand, if $V_i, V_j\in \mc$ for some $\mc\in \mfc$, then either $(V_i,V_j)$ is an irregular pair, or $V_i$ and $V_j$ are neighbors. Moreover, in this case even if $(V_i,V_j)$ is irregular, there is a path with $\le 6$ vertices joining $V_i$ and $V_j$. 
\end{lmm}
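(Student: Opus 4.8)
The plan is to reduce both assertions to the clique structure of $\mfn$ (Lemma~\ref{firstclique}) together with the transitivity supplied by Lemma~\ref{chainlmm}, with the second assertion amounting to exhibiting a short walk between $V_i$ and $V_j$ in the graph on $\{V_1,\dots,V_q\}$ and feeding it to Lemma~\ref{chainlmm}. Throughout we assume $i\neq j$, the case $V_i=V_j$ being vacuous, and that $\ep$ is small enough that $\ep^{-1/2}\ge 5$.

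For the first assertion I would argue straight from the definition of $\mfc$. Each element of $\mfc$ is the union of the members of a single clique of $\mfn$; since the neighborhoods comprising $\mfn$ are pairwise disjoint, distinct cliques have disjoint unions, so $\mc_1\neq\mc_2$ come from distinct cliques $\mk_1\neq\mk_2$ of $\mfn$. By Lemma~\ref{firstclique} these cliques are distinct connected components of the graph on $\mfn$, hence no member of $\mk_1$ is joined by an edge to a member of $\mk_2$. Writing $V_i\in\mn_1\in\mk_1$ and $V_j\in\mn_2\in\mk_2$, the absence of an edge between $\mn_1$ and $\mn_2$ means, by the definition of that edge relation, that no element of $\mn_1$ is a neighbor of an element of $\mn_2$; in particular $V_i$ and $V_j$ are not neighbors.

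For the second assertion, write $\mc=\bigcup_{\mn\in\mk}\mn$ for the clique $\mk$ of $\mfn$ giving rise to $\mc$, choose $\mn_1,\mn_2\in\mk$ with $V_i\in\mn_1$ and $V_j\in\mn_2$, and let $V_{t_1},V_{t_2}$ be the respective centers. If $\mn_1=\mn_2$, then $V_i$ and $V_j$ both lie in a neighborhood of $V_{t_1}$, giving the walk $V_i\to V_{t_1}\to V_j$. If $\mn_1\neq\mn_2$, then since $\mk$ is a clique there is an edge between $\mn_1$ and $\mn_2$, i.e.\ neighbors $V_a\in\mn_1$ and $V_b\in\mn_2$, giving the walk $V_i\to V_{t_1}\to V_a\to V_b\to V_{t_2}\to V_j$; each consecutive pair here is either equal (the step then vacuous) or a neighbor pair, because $\mn_1$ is a neighborhood of $V_{t_1}$ and $\mn_2$ of $V_{t_2}$. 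Excising cycles reduces the walk to a simple path $V_i=W_0,W_1,\dots,W_k=V_j$ with $k\le 5$ — hence at most six vertices — and every edge $(W_l,W_{l+1})$ is a neighbor pair, so $d(W_l,W_{l+1})\ge 1-2\ep$. If $k=1$ then $V_i$ and $V_j$ are neighbors and we are done. If $k\ge 2$, then Lemma~\ref{chainlmm} applies to the chain $W_0,\dots,W_k$ with $(V_a,V_b)=(V_i,V_j)$: if $(V_i,V_j)$ is $\ep$-regular it yields $d(V_i,V_j)\ge 1-2\ep$, so $V_i$ and $V_j$ are neighbors; otherwise $(V_i,V_j)$ is irregular, and the simple path already exhibits a path with at most six vertices joining $V_i$ and $V_j$.

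I do not anticipate a real obstacle: once Lemmas~\ref{firstclique} and~\ref{chainlmm} are in hand this is essentially bookkeeping. The only points that require a little care are arranging the walk so that the reduced chain literally satisfies the hypotheses of Lemma~\ref{chainlmm} (distinct indices and $2\le k\le\ep^{-1/2}$), checking that excising cycles never introduces a non-neighbor edge, and isolating the degenerate case $k=1$ where Lemma~\ref{chainlmm} does not apply but the conclusion is immediate.
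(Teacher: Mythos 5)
Your argument is correct and follows the same route as the paper: for the first assertion, use the disjoint-union-of-cliques structure of $\mfn$ from Lemma~\ref{firstclique}; for the second, build the length-five walk $V_i,V_{t_1},V_a,V_b,V_{t_2},V_j$ through the neighborhood centers and connecting pair, then feed the resulting path to Lemma~\ref{chainlmm}. The paper writes down the same six-vertex path (in the notation $V_i,V_a,V_k,V_l,V_b,V_j$) without spelling out the cycle-excision and $k=1$ degeneracies, so your bookkeeping is a slight tightening rather than a different idea.
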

\begin{proof}
If $V_i\in \mc_1$ and $V_j\in \mc_2$ for two distinct elements $\mc_1$ and $\mc_2$ of $\mfc$, it follows directly from the definition of $\mfc$ that $V_i$ and $V_j$ cannot be neighbors. Next, suppose that $V_i, V_j\in \mc$ for some $\mc\in \mfc$, and $(V_i, V_j)$ is $\ep$-regular. Then either $V_i,V_j\in \mn$ for some $\mn\in \mfn$, or  $V_i\in \mn_1$ and $V_j\in \mn_2$ for some $\mn_1,\mn_2\in \mfn$ that are neighbors. In the first case, suppose that $\mn$ is a neighborhood of some $V_a$. Then $V_i, V_a, V_j$ is a path, and hence by Lemma \ref{chainlmm}, $V_i$ is a neighbor of $V_j$. In the second case, suppose that $\mn_1$ is a  neighborhood of $V_a$ and $\mn_2$ is a neighborhood of $V_b$. Since $\mn_1$ and $\mn_2$ are neighbors, there exist $V_k\in \mn_1$ and $V_l\in \mn_2$ which are neighbors. Then $V_i, V_a, V_k, V_l, V_b, V_j$ is a path, and hence by Lemma \ref{chainlmm}, $V_i$ and $V_j$ are neighbors. This argument also establishes that even if $(V_i, V_j)$ is an irregular pair, we can find a path with $\le 6$ vertices joining $V_i$ and $V_j$.
\end{proof}
Next, let $\md$ be the set of all $V_i$ that are not elements of any $\mc\in \mfc$. 
\begin{lmm}\label{outsidenbrs}
For any $V_i\in \md$, there are less than $\ep^{1/4} q$ many $V_j\in \md$ that are neighbors of $V_i$. 
\end{lmm}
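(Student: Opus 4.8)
The plan is to proceed by contradiction, using the maximality built into the definition of $\mfn$. Suppose, towards a contradiction, that there is some $V_i \in \md$ with at least $\ep^{1/4} q$ neighbors (in the graph on $\{V_1,\ldots,V_q\}$) that also lie in $\md$. Write $W$ for the set of these neighbors and put $\mn := \{V_i\} \cup W$. By the very definition of a ``neighborhood'', $\mn$ is a neighborhood of $V_i$, and clearly $|\mn| \ge |W| \ge \ep^{1/4} q$.

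The next step is to verify that $\mn$ is disjoint from every member of $\mfn$. Here one unwinds the definitions: each element of $\mfn$ is a neighborhood that was grouped into some clique of $\mfn$, so every $V_j$ occurring in a member of $\mfn$ lies in some $\mc \in \mfc$; whereas every element of $\mn$ lies in $\md$, which by definition consists exactly of those $V_j$ belonging to no $\mc \in \mfc$. Hence $\mn$ meets no member of $\mfn$.

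Finally, I would conclude: $\mfn \cup \{\mn\}$ is then a strictly larger collection of pairwise disjoint neighborhoods, each of size $\ge \ep^{1/4} q$, contradicting the maximality of $\mfn$. Therefore no such $V_i$ exists, which is precisely the assertion of the lemma.

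This proof involves no estimates and no appeal to hyperbolicity; the only place demanding a little care is the disjointness verification, where one must trace through the chain $V_j \in \md \Rightarrow V_j \notin \bigcup \mfc \Rightarrow V_j$ belongs to no member of $\mfn$. That is the main (and rather mild) obstacle, and it is purely a matter of bookkeeping with the definitions of $\mfc$ and $\md$.
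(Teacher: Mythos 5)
Your proof is correct and follows essentially the same route as the paper's: build the candidate neighborhood $\mn = \{V_i\}\cup W$ inside $\md$, observe it is disjoint from every member of $\mfn$ (since $\md$ is, by construction, exactly the collection of $V_j$ that appear in no $\mc \in \mfc$, hence in no member of $\mfn$), and contradict maximality. You have merely spelled out the disjointness bookkeeping that the paper leaves implicit.
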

\begin{proof}
Suppose that there is some $V_i\in \md$ that has $\ge \ep^{1/4} q$ neighbors in $\md$. Then there is a neighborhood $\mn\subset \md$ of size $\ge \ep^{1/4} q$. But this neighborhood is disjoint from all the neighborhoods in $\mfn$. This contradicts the maximality of $\mfn$. 
\end{proof}
\begin{lmm}\label{uniquegroup}
Suppose that $V_i \in \md$ and $\mc \in \mfc$ are such that $V_i$ has at least $\ep^{1/3}q$ neighbors in $\mc$. Then $V_i$ has less than $\ep^{1/3} q$ neighbors in the union of all members of $\mfc$ other than $\mc$. 
\end{lmm}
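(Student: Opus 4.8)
The plan is to argue by contradiction, combining the short-path principle (Lemma~\ref{chainlmm}) with the fact that elements of distinct members of $\mfc$ are never neighbors (Lemma~\ref{c1c2}). So suppose that $V_i\in\md$ has at least $\ep^{1/3}q$ neighbors in $\mc$ and, contrary to the assertion, also at least $\ep^{1/3}q$ neighbors in the union of the members of $\mfc$ other than $\mc$. Let $P$ be the set of $V_a\in\mc$ that are neighbors of $V_i$, and let $P'$ be the set of $V_b$ lying in some $\mc'\in\mfc$ with $\mc'\ne\mc$ that are neighbors of $V_i$. Then $|P|\ge\ep^{1/3}q$, $|P'|\ge\ep^{1/3}q$, and $P\cap P'=\emptyset$ because distinct members of $\mfc$ are disjoint.

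The first step is to extract a regular crossing pair between $P$ and $P'$. The number of pairs $(V_a,V_b)$ with $V_a\in P$ and $V_b\in P'$ is $|P|\,|P'|\ge\ep^{2/3}q^2$, whereas by Theorem~\ref{weightedfinal} at most $\ep q^2$ of the pairs $(V_a,V_b)$ with $a\ne b$ fail to be $\ep$-regular. Since $\ep<1$ forces $\ep^{2/3}q^2>\ep q^2$, there exist $V_a\in P$ and $V_b\in P'$ such that $(V_a,V_b)$ is an $\ep$-regular pair. Note that $V_a$, $V_i$ and $V_b$ are three distinct vertices of the graph on $\{V_1,\ldots,V_q\}$: $V_i\in\md$ lies in no member of $\mfc$, whereas $V_a$ and $V_b$ lie in distinct members of $\mfc$.

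Next I would apply Lemma~\ref{chainlmm}. Since both $V_a$ and $V_b$ are neighbors of $V_i$, the sequence $V_a,V_i,V_b$ is a path of length $k=2$ in the graph on $\{V_1,\ldots,V_q\}$, and $2\le k\le\ep^{-1/2}$ because $\ep$ is small. As $(V_a,V_b)$ is $\ep$-regular and $\delta_0$ is small enough for Lemma~\ref{chainlmm} to apply (by the standing assumption~\eqref{delta0}), we conclude that $d(V_a,V_b)\ge 1-2\ep$, so that $V_a$ and $V_b$ are neighbors. But $V_a\in\mc$ and $V_b\in\mc'$ with $\mc'\ne\mc$, which contradicts Lemma~\ref{c1c2}. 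This contradiction shows that $V_i$ has fewer than $\ep^{1/3}q$ neighbors in the union of the members of $\mfc$ other than $\mc$, which is the claim.

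I do not anticipate a genuine obstacle: the whole argument amounts to one use of the counting estimate for regular pairs plus one use of Lemma~\ref{chainlmm} applied to a path of length two. The only points that require a little care are verifying that $V_a$, $V_i$ and $V_b$ are genuinely distinct indices and that $k=2$ lies in the admissible window $[2,\ep^{-1/2}]$, so that the hypotheses of Lemma~\ref{chainlmm} are satisfied verbatim.
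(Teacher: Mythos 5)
Your proof is correct and follows essentially the same route as the paper's: take the two neighbor sets, count that $\ep^{2/3}q^2 > \ep q^2$ forces at least one $\ep$-regular pair $(V_a,V_b)$ crossing between them, apply Lemma~\ref{chainlmm} to the path $V_a, V_i, V_b$ to conclude $V_a$ and $V_b$ are neighbors, and contradict Lemma~\ref{c1c2}. The extra care you take to check that $V_a$, $V_i$, $V_b$ are distinct and that $k=2$ lies in $[2,\ep^{-1/2}]$ is a small but welcome addition that the paper leaves implicit.
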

\begin{proof}
Let $\ms_1$ be the set of all neighbors of $V_i$ in $\mc$, and let $\ms_2$ be the set of all neighbors of $V_i$ in the union of all elements of $\mfc$ other than $\mc$. By assumption, $|\ms_1|\ge \ep^{1/3}q$. If also $|\ms_2|\ge \ep^{1/3} q$, then there are $\ge \ep^{2/3}q^2$ pairs $(V_j, V_k)$ such that $V_j \in \ms_1$ and $V_k\in \ms_2$. Therefore at least one such pair $(V_j, V_k)$ must be $\ep$-regular. Since $V_j, V_i, V_k$ is a path, Lemma \ref{chainlmm} shows that $V_j$ and $V_k$ are neighbors. But this contradicts the first assertion of Lemma~\ref{c1c2}. 
\end{proof}
For each $\mc\in \mfc$, let $\mc'$ be the superset of $\mc$ consisting of all elements of $\mc$ and all elements of $\md$ that have $\ge \ep^{1/3}q$ neighbors in $\mc$. Let $\mfc'$ be the set of all such $\mc'$.
Lemma \ref{uniquegroup} shows for any $V_i\in \md$, there can be at most one $\mc\in \mfc$ such that $V_i$ has $\ge \ep^{1/3}q$ neighbors in $\mc$.  Thus, the elements of $\mfc'$ are disjoint. Let $\md'$ be the set of all elements of $\md$ that do not belong to any $\mc'$. 
 A schematic picture depicting $\mfc'$ and $\md'$ is given in Figure \ref{mfcpic}.
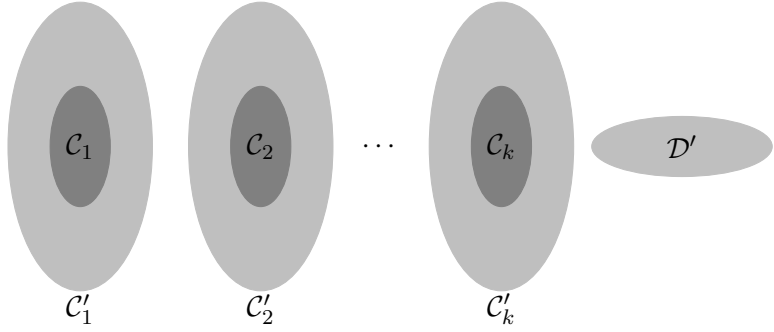
\begin{figure}
\begin{center}
\begin{tikzpicture}[scale=.8]
\draw [lightgray, fill] (0,0) ellipse (1.2 and 2.4);
\draw [lightgray, fill] (-4,0) ellipse (1.2 and 2.4);
\draw [lightgray, fill] (-7,0) ellipse (1.2 and 2.4);
\draw [lightgray, fill] (3,0) ellipse (1.5 and .5);
\draw [gray, fill] (0,0) ellipse (.5 and 1);
\draw [gray, fill] (-4,0) ellipse (.5 and 1);
\draw [gray, fill] (-7,0) ellipse (.5 and 1);
\node at (-2,0) {$\cdots$};
\node at (0,0) {$\mc_k$};
\node at (-4,0) {$\mc_2$};
\node at (-7,0) {$\mc_1$};
\node at (3,0) {$\md'$};
\node at (0,-2.7) {$\mc_k'$};
\node at (-4,-2.7) {$\mc_2'$};
\node at (-7,-2.7) {$\mc_1'$};
\end{tikzpicture}
\caption{Schematic picture of the components of $\mfc'$ (where $k$ is the number of components) and the remainder set $\md'$. The union of the light gray regions is $\md$.\label{mfcpic}}
\end{center}
\end{figure}

\begin{lmm}\label{internalpairs}
For any $\mc\in \mfc$, the set $\mc'$ has the property that any two distinct elements of $\mc'$ are either neighbors, or an irregular pair.
\end{lmm}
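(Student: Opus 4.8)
The plan is to reduce the whole statement to a single move: whenever $V_i,V_j$ are distinct elements of $\mc'$ forming an $\ep$-regular pair, I will produce a short simple path $V_i=V_{i_0},V_{i_1},\ldots,V_{i_k}=V_j$ in the auxiliary graph on $\{V_1,\ldots,V_q\}$ — the one whose edges are exactly the neighbor pairs — with $2\le k\le 7$, and then feed it to Lemma \ref{chainlmm}. Since the consecutive blocks $V_{i_\ell},V_{i_{\ell+1}}$ are neighbors they satisfy $d(V_{i_\ell},V_{i_{\ell+1}})\ge 1-2\ep$, and $(V_i,V_j)$ is $\ep$-regular by assumption, so (taking $\ep$ small enough that $\ep^{-1/2}\ge 7$) Lemma \ref{chainlmm} yields $d(V_i,V_j)\ge 1-2\ep$; combined with $\ep$-regularity this says precisely that $V_i$ and $V_j$ are neighbors, which is exactly the assertion of the lemma (it is the statement ``an $\ep$-regular pair inside $\mc'$ is a neighbor pair'').

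So the real content is constructing that path, and I would split into three cases according to how $V_i$ and $V_j$ sit relative to the core $\mc$. If $V_i,V_j\in\mc$, there is nothing new to do, since Lemma \ref{c1c2} already states that an $\ep$-regular pair inside a single $\mc$ is a neighbor pair. If one of them, say $V_i$, lies in $\mc$ and the other, $V_j$, lies in $\mc'\setminus\mc$, then by the definition of $\mc'$ the block $V_j$ has at least $\ep^{1/3}q$ neighbors in $\mc$; I either find $V_i$ among them (then we are done at once) or pick such a neighbor $V_k\in\mc$ with $V_k\ne V_i$, take the path on at most $6$ vertices joining $V_i$ to $V_k$ inside $\mc$ supplied by Lemma \ref{c1c2}, and append the edge $V_kV_j$. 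Finally, if both $V_i,V_j\in\mc'\setminus\mc$, each of them has $\ge\ep^{1/3}q$ neighbors in $\mc$; if those two neighbor sets intersect, a common element $V_k$ gives the length-two path $V_i,V_k,V_j$, and otherwise I pick $V_k\in\mc$ a neighbor of $V_i$ and $V_l\in\mc$ a neighbor of $V_j$ with $V_k\ne V_l$, connect $V_k$ to $V_l$ inside $\mc$ by a Lemma \ref{c1c2} path on at most $6$ vertices, and prepend $V_iV_k$, append $V_lV_j$.

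What makes these concatenations honest simple paths — and the one spot to be careful — is that the paths furnished by Lemma \ref{c1c2} run entirely inside $\mc$ (through the centers and members of the neighborhoods whose union is $\mc$), whereas the extra endpoints $V_i$ and/or $V_j$ taken from $\mc'\setminus\mc$ lie in $\md$, which is disjoint from $\mc$; hence no vertex is repeated, and the total vertex count is at most $7$ in the two-block case and at most $8$ in the three-block case, so $k\le 7$. One must also dispose of the degenerate sub-cases where the Lemma \ref{c1c2} path has length one (the two blocks are already neighbors) or where $V_k=V_l$, in which the path collapses but still has $k\ge 2$; none of these is difficult, but all of them have to be checked so that the hypothesis $2\le k\le\ep^{-1/2}$ of Lemma \ref{chainlmm} is genuinely met. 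I expect this bookkeeping — vertex-disjointness of the glued paths, the uniform length bound, and the degenerate cases — rather than any substantive new idea, to be the main obstacle.
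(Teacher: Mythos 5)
Your proposal is correct and follows essentially the same route as the paper: the same three-case split according to whether $V_i,V_j$ lie in $\mc$ or in $\mc'\setminus\mc$, the same construction of a short path through $\mc$ using the $\le 6$-vertex paths from Lemma \ref{c1c2}, and the same appeal to Lemma \ref{chainlmm}. The extra bookkeeping you flag (vertex-disjointness via $\md\cap\mc=\emptyset$, the degenerate cases $V_k=V_i$ or $V_k=V_l$, and the lower bound $k\ge 2$) is a genuine gap the paper elides, and your handling of it is sound.
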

\begin{proof}
Take any distinct $V_i, V_j\in \mc'$ such that $(V_i,V_j)$ is an $\ep$-regular pair. If they are both in $\mc$, then the assertion is proved by Lemma \ref{c1c2}. 

If $V_i\in \mc$ and $V_j \in \md$, then $V_j$ has a neighbor $V_k \in \mc$. By Lemma \ref{c1c2}, there is a path with $\le 6$ vertices joining $V_k$ and $V_i$. Since $V_j$ and $V_k$ are neighbors, we can concatenate $V_j$ at the beginning of this path to get a path with $\le 7$ vertices joining $V_j$ and $V_i$. Therefore by Lemma \ref{chainlmm}, $V_j$ and $V_i$ are neighbors.

Lastly, if $V_i$ and $V_j$ are both in $\md$, then they have neighbors $V_k$ and $V_l$ in $\mc$. By Lemma \ref{c1c2}, there is a path with $\le 6$ vertices joining $V_k$ and $V_l$. Since $V_i$ and $V_k$ are neighbors, and $V_j$ and $V_l$ are neighbors, we can concatenate $V_i$ at the beginning of the path and $V_j$ to the end of the path to get a path with $\le 8$ vertices joining $V_i$ and $V_j$. Therefore by Lemma \ref{chainlmm}, $V_i$ and $V_j$ are neighbors.
\end{proof}
Call a pair $(V_i, V_j)$ ``bad'' if $V_i$ and $V_j$ are neighbors, but they belong to distinct elements of $\mfc'$. 
\begin{lmm}\label{badpairs}
The number of bad pairs is at most $3\ep^{1/12} q^2$. 
\end{lmm}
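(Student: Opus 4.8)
The plan is to classify each bad pair $\{V_i,V_j\}$ according to how many of its two endpoints lie in $\bigcup\mfc$ (the ``cores'' $\mc$), and to bound the resulting classes separately. The guiding principle is that distinct cores are well-behaved by Lemma \ref{c1c2}, that an element of $\md$ which got absorbed into some $\mc'$ is controlled by Lemma \ref{uniquegroup}, and that a leftover element of $\md'$ is controlled by the defining property of $\md'$ together with the bound $|\mfc|\le\ep^{-1/4}$ from Lemma \ref{csize}.

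First I would observe that no bad pair has both endpoints in cores: if $V_i$ and $V_j$ lie in the same core $\mc$ then they both lie in $\mc'$, so they do not lie in distinct elements of $\mfc'$ and the pair is not bad; and if they lie in distinct cores, then by the first assertion of Lemma \ref{c1c2} they are not neighbors. So every bad pair has at most one endpoint in $\bigcup\mfc$.

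Next I would bound the bad pairs with exactly one endpoint in a core. Fix the other endpoint $V_j\in\md$; any such bad pair is of the form $\{V_i,V_j\}$ with $V_i$ a neighbor of $V_j$ lying in a core $\mc$ with $V_j\notin\mc'$ (otherwise $V_i$ and $V_j$ would both lie in $\mc'$). I claim $V_j$ has fewer than $\ep^{1/12}q$ neighbors in the union of all such cores. If $V_j\in\mc_2'$ for some (unique) $\mc_2$, then by the definition of $\mc_2'$ it has at least $\ep^{1/3}q$ neighbors in $\mc_2$, so Lemma \ref{uniquegroup} gives fewer than $\ep^{1/3}q\le\ep^{1/12}q$ neighbors in the union of all the other cores. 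If instead $V_j\in\md'$, then by the definition of $\md'$ it has fewer than $\ep^{1/3}q$ neighbors in each individual core, hence fewer than $\ep^{-1/4}\cdot\ep^{1/3}q=\ep^{1/12}q$ neighbors in $\bigcup\mfc$ by Lemma \ref{csize}. Summing this over the at most $q$ choices of $V_j\in\md$ yields fewer than $\ep^{1/12}q^2$ bad pairs of this type.

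Finally I would bound the bad pairs with no endpoint in a core: these are exactly pairs of neighbors inside $\md$, and Lemma \ref{outsidenbrs} bounds the number of neighbors in $\md$ of any $V_i\in\md$ by $\ep^{1/4}q$, giving at most $\frac{1}{2}|\md|\,\ep^{1/4}q\le\frac{1}{2}\ep^{1/4}q^2$ such pairs. Adding the two contributions and using $\ep<1$ (so $\ep^{1/3},\ep^{1/4}\le\ep^{1/12}$) gives fewer than $\ep^{1/12}q^2+\frac{1}{2}\ep^{1/4}q^2\le\frac{3}{2}\ep^{1/12}q^2\le 3\ep^{1/12}q^2$ bad pairs. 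I do not anticipate a serious obstacle here; the only point needing care is arranging the case split so that each bad pair is counted once --- in particular, a pair with one endpoint in a core and the other in $\md'$ belongs to the ``exactly one endpoint in a core'' case and should not be treated separately --- and checking that the crude exponent arithmetic lands below $3\ep^{1/12}$.
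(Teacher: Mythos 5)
Your proof is correct and follows essentially the same case analysis as the paper's: rule out two core endpoints via Lemma \ref{c1c2}, bound one-core-endpoint pairs via Lemma \ref{uniquegroup} and Lemma \ref{csize}, and bound zero-core-endpoint pairs via Lemma \ref{outsidenbrs}. The only cosmetic differences are that you also treat the subcase where the $\md$-endpoint lies in $\md'$ (the paper implicitly excludes this, since such a vertex belongs to no element of $\mfc'$ and so cannot be part of a bad pair as defined, but including it is harmless) and that you invoke the ``union over all other cores'' conclusion of Lemma \ref{uniquegroup} directly rather than multiplying a per-core bound by $|\mfc|$ as the paper does.
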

\begin{proof}
Let $(V_i,V_j)$ be a bad pair. We consider several cases. First, by Lemma \ref{c1c2}, it cannot be that both $V_i$ and $V_j$ are in the complement of $\md$. 

Next, suppose that $V_i\in \md$ and $V_j\notin \md$. Then $V_i\in \mc_1'$ for some $\mc_1\in \mfc$ and $V_j\in \mc_2$ for some $\mc_2\ne \mc_1$. By Lemma \ref{uniquegroup}, there are less than $\ep^{1/3}q$ neighbors of $V_i$ in $\mc_2$. By Lemma \ref{csize}, there are at most $\ep^{-1/4}$ choices of $\mc_2$. Thus, there are at most $\ep^{-1/4}\ep^{1/3}q= \ep^{1/12} q$ choices of $V_j$ for this $V_i$, and therefore at most  $\ep^{1/12} q^2$ choices of $(V_i,V_j)$ of this type.

Finally, suppose that both $V_i, V_j\in \md$. Then by Lemma \ref{outsidenbrs}, there are less than $\ep^{1/4}q$ choices of $V_j$ for each $V_i$. Thus, there are at most $\ep^{1/4}q^2$ pairs of this type. 
\end{proof}

\begin{lmm}\label{dlmm}
Any element of $\md'$ has at most $2\ep^{1/12}q$ neighbors among $\{V_1,\ldots, V_q\}$.
\end{lmm}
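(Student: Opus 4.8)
The plan is to bound the neighbors of a fixed $V_i\in\md'$ by splitting them into two groups according to whether the neighbor lies inside $\md$ or outside $\md$ (that is, inside $\bigcup_{\mc\in\mfc}\mc$), and then to handle each group with a result already in hand. Since every $V_j\in\{V_1,\dots,V_q\}$ is either an element of $\md$ or an element of exactly one $\mc\in\mfc$, these two groups together account for all neighbors of $V_i$ with no overlap.

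First I would bound the neighbors lying in $\md$. Because $V_i\in\md'\subset\md$, Lemma~\ref{outsidenbrs} applies verbatim and gives that $V_i$ has strictly fewer than $\ep^{1/4}q$ neighbors in $\md$. Next I would bound the neighbors lying outside $\md$. Here I would invoke the definition of $\md'$: since $V_i$ belongs to no $\mc'\in\mfc'$, for every $\mc\in\mfc$ the vertex $V_i$ has strictly fewer than $\ep^{1/3}q$ neighbors in $\mc$. By Lemma~\ref{csize} there are at most $\ep^{-1/4}$ sets $\mc\in\mfc$, so summing over all of them shows that $V_i$ has strictly fewer than $\ep^{-1/4}\cdot\ep^{1/3}q=\ep^{1/12}q$ neighbors in $\bigcup_{\mc\in\mfc}\mc$.

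Adding the two estimates, the total number of neighbors of $V_i$ among $\{V_1,\dots,V_q\}$ is strictly less than $\ep^{1/4}q+\ep^{1/12}q$, which is at most $2\ep^{1/12}q$ since $\ep<1$ forces $\ep^{1/4}\le\ep^{1/12}$. This proves the claim. There is no genuine obstacle in this lemma: it is a purely combinatorial bookkeeping consequence of the definitions of $\md'$ and $\mfc'$ combined with the previously established Lemmas~\ref{outsidenbrs} and~\ref{csize}; the only point requiring any care is making sure the two neighbor-groups are exhaustive and disjoint, which follows from the partition structure of $\{V_1,\dots,V_q\}$ into $\md$ and the sets in $\mfc$.
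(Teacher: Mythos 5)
Your argument is correct and matches the paper's proof exactly: both split the neighbors of $V_i\in\md'$ into those in $\md$ (bounded by Lemma~\ref{outsidenbrs}) and those in some $\mc\in\mfc$ (bounded via the definition of $\md'$ together with Lemma~\ref{csize}), then add the two bounds. The only extra detail you supply is the explicit check that $\ep^{1/4}\le\ep^{1/12}$ for $\ep<1$, which the paper leaves implicit.
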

\begin{proof}
Take any $V_i\in \md'$ and any neighbor $V_j$ of $V_i$. Then by Lemma \ref{outsidenbrs}, there are less than $\ep^{1/4}q$ choices of $V_j\in \md$. On the other hand, by definition of $\md'$, $V_i$ has less than $\ep^{1/3}q$ neighbors in each $\mc\in \mfc$. Thus, by Lemma~\ref{csize}, there are at most $\ep^{1/12}q$ choices of such $V_j$. Since any neighbor of $V_i$ is either in $\md$ or in $\mc$ for some $\mc\in \mfc$, this completes the proof.  
\end{proof}

We finally arrive at the main result of this section, which says that the graph $G$ can be modified into a disjoint union of cliques by adding and deleting a set of edges that has small $\P^{\otimes 2}$-measure.

\begin{lmm}\label{cliquelmm}
Under the assumptions \eqref{pps} and \eqref{delta0}, the graph $G$ can be modified into a disjoint union of cliques by adding and deleting edges in such a way that if $\Delta E$ is the set of all edges that were added or deleted, then 
\begin{equation}\label{diffbd}
\P^{\otimes 2}(\Delta E) \le C(\ep^{1/12} + m^{-1})\P(S')^2,
\end{equation}
where $C$ is a universal constant. 
Moreover, any non-singleton clique $B$ in the resulting graph  has 
\begin{align}\label{pblower}
\P(B)\ge \frac{1}{2}\ep^{1/4} \P(S').
\end{align}
\end{lmm}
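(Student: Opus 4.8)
The plan is to read off the desired clique partition directly from the structures $\mfc'$ and $\md'$ built in this section, and then bound $\P^{\otimes 2}(\Delta E)$ by splitting the modified edges into four categories. For each $\mc'\in\mfc'$ put $B_{\mc'}:=\bigcup_{V_i\in\mc'}V_i$, and declare the final clusters to be the sets $\{B_{\mc'}\}_{\mc'\in\mfc'}$ together with the singletons $\{x\}$ for every $x\in V_0\cup\bigcup_{V_i\in\md'}V_i$. Since the elements of $\mfc'$ are pairwise disjoint and $\{V_0,\ldots,V_q\}=V_0\sqcup\bigsqcup_{\mc'\in\mfc'}\mc'\sqcup\md'$, this is a partition of $S'$. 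I modify $G$ so that it becomes exactly the disjoint union of cliques on these clusters: add every missing edge inside each $B_{\mc'}$, delete every edge joining two distinct $B_{\mc'}$'s, and delete every edge incident to $V_0$ or to a class lying in $\md'$. One checks immediately that the surviving edges are precisely the within-$B_{\mc'}$ edges, so the result is the claimed disjoint union of cliques, and that $\Delta E$ is contained in the union of (a) pairs inside a single $B_{\mc'}$, (b) present edges with endpoints in two distinct $B_{\mc'}$'s, (c) present edges incident to a class in $\md'$, and (d) present edges incident to $V_0$.

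For the accounting I would repeatedly use Lemma \ref{vlmm} in the form $\tfrac12\P(S')/q\le\P(V_i)\le\tfrac32\P(S')/q$, the inequality $q\ge m$, the fact that the $\P^{\otimes 2}$-mass carried by the at most $\ep q^2$ irregular pairs is at most $\ep q^2(\tfrac32\P(S')/q)^2=\tfrac94\ep\P(S')^2$, and the dichotomy from Lemma \ref{mainlmm} that every $\ep$-regular pair has density $<3\ep$ or $\ge 1-2\ep$. For (a): by Lemma \ref{internalpairs}, two distinct classes $V_i,V_j\subset\mc'$ are either irregular or neighbors, so in the neighbor case at most a $2\ep$ fraction of the potential edges is missing; summing over such regular pairs costs $O(\ep)\P(S')^2$, the irregular pairs cost $\le\tfrac94\ep\P(S')^2$, and the pairs lying inside a single class contribute at most $\sum_{i=1}^q\P(V_i)^2\le\tfrac32\P(S')^2/q\le\tfrac32 m^{-1}\P(S')^2$. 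For (b): an edge of this type lies in a pair $(V_i,V_j)$ with $V_i,V_j$ in different members of $\mfc'$; if that pair consists of neighbors it is a bad pair, so Lemma \ref{badpairs} bounds the number of such pairs by $3\ep^{1/12}q^2$, each carrying mass $\le(\tfrac32\P(S')/q)^2$, for a total $O(\ep^{1/12})\P(S')^2$, while the remaining pairs are regular with density $<3\ep$ or irregular, contributing $O(\ep)\P(S')^2$ in all. For (c): by Lemma \ref{dlmm} a class $V_i\in\md'$ has at most $2\ep^{1/12}q$ neighbors, so summing over $V_i\in\md'$ the edges to neighbors cost at most $\sum_{V_i\in\md'}\P(V_i)\cdot 2\ep^{1/12}q\cdot\tfrac32\P(S')/q=O(\ep^{1/12})\P(S')^2$, and edges of $V_i$ to non-neighbor classes are again absorbed by the $O(\ep)$ and irregular bounds. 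For (d): $\P(V_0)\le\ep\P(S')$ gives mass $\le 2\ep\P(S')^2$. Adding these up and using $\ep\le\ep^{1/12}$ yields $\P^{\otimes 2}(\Delta E)\le C(\ep^{1/12}+m^{-1})\P(S')^2$, which is \eqref{diffbd}.

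For the lower bound \eqref{pblower}: each $\mc\in\mfc$ is the union of at least one neighborhood from $\mfn$, hence contains at least $\ep^{1/4}q$ of the classes $V_1,\ldots,V_q$; since $\mc'\supseteq\mc$, the set $B_{\mc'}$ contains at least $\ep^{1/4}q$ classes, each of $\P$-measure at least $\tfrac12\P(S')/q$ by Lemma \ref{vlmm}, so $\P(B_{\mc'})\ge\tfrac12\ep^{1/4}\P(S')$. As the only non-singleton cliques in the modified graph are the sets $B_{\mc'}$, this proves \eqref{pblower}.

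I do not expect any single step to be deep; the real work is making the case split (a)--(d) genuinely exhaustive and charging each modified edge exactly once — in particular ensuring that the irregular-pair mass, the within-class mass, and the ``present but between clusters, non-neighbor'' edges are each accounted for and folded into the $O(\ep^{1/12})$ or $O(m^{-1})$ terms. The appearance of $m^{-1}$ in \eqref{diffbd} is forced: it comes from the unavoidable within-class contribution $\sum_i\P(V_i)^2$, which is why the bound cannot be stated purely in terms of $\ep$.
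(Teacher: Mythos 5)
Your proposal is correct and follows essentially the same approach as the paper. The paper organizes the modification as a five-stage sequential process ($\Delta E_1,\dots,\Delta E_5$: delete $V_0$-edges, fill in each $V_i$, fill in within-$\mc'$ edges, delete cross-$\mc'$ edges, delete $\md'$-edges) and bounds each stage separately, whereas you declare the target graph directly and then classify $\Delta E$ into cases (a)--(d); these two bookkeepings produce the same estimates via the same inputs (Lemmas \ref{vlmm}, \ref{mainlmm}, \ref{internalpairs}, \ref{badpairs}, \ref{dlmm}, \ref{csize}), and the lower bound \eqref{pblower} is handled identically. One small slip in the categorization: the within-class mass $\sum_{i=1}^q \P(V_i)^2$ that you charge under (a) also needs to absorb pairs lying inside a single class $V_i\in\md'$, which formally belong to (c); since your bound already sums over all $i=1,\dots,q$, the total is unaffected, but the case split as stated is not literally a partition.
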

\begin{proof}
Edges are added and deleted in several steps. First, delete all edges with at least one endpoint in $V_0$. Let $\Delta E_1$ be the set of deleted edges. Then clearly 
\[
\P^{\otimes 2}(\Delta E_1)\le 2\P(V_0) \P(S')\le 2\ep \P(S')^2. 
\]
Next, add all edges between vertices within the same $V_i$, $1\le i\le q$. Let $\Delta E_2$ be the set of all edges added in this step. Then by Lemma \ref{vlmm}, 
\begin{align*}
\P^{\otimes 2}(\Delta E_2) &\le \sum_{i=1}^q \P(V_i)^2 \le q \frac{9\P(S')^2}{4q^2}\\
&= \frac{9\P(S')^2}{4q} \le \frac{9\P(S')^2}{4m}.
\end{align*}
In the next step, add all missing edges between any $V_i$ and $V_j$ that are members of the same $\mc'\in \mfc'$. By Lemma \ref{internalpairs}, such pairs are either irregular, or they are neighbors of each other. In the latter case, the total mass of the missing edges is at most $2\ep \P(V_i)\P(V_j)$. Thus, if $\Delta E_3$ is the set of edges added in this step, then by Lemma \ref{vlmm},
\[
\P^{\otimes 2}(\Delta E_3) \le (\ep q^2  + 2\ep q^2) \frac{9\P(S')^2}{4q^2}\le 7\ep\P(S')^2. 
\]
Next, delete all edges between any $V_i\in \mc_1'$ and $V_j\in \mc_2'$ where $\mc_1'\ne\mc_2'$. Then $(V_i, V_j)$ is either an irregular pair, or $(V_i,V_j)$ is regular but $V_i$ and $V_j$ are not neighbors, or $(V_i,V_j)$ is a bad pair. Thus, if $\Delta E_4$ is the set of edges added in this step, then by Lemma \ref{mainlmm}, Lemma \ref{badpairs} and Lemma \ref{vlmm},
\begin{align*}
\P^{\otimes 2}(\Delta E_4) &\le (\ep q^2  + 3\ep q^2 + 3\ep^{1/12} q^2) \frac{9\P(S')^2}{4q^2} \\
&\le 16\ep^{1/12}\P(S')^2.
\end{align*}
Finally, delete all edges with at least one vertex in some $V_i\in \md'$. Let $\Delta E_5$ be the set of deleted edges. Given $V_i\in\md'$ and any $V_j$, by Lemma \ref{dlmm} there are at most $2\ep^{1/12} q$ choices of $V_j$ such that $V_j$ is a neighbor of $V_i$. The other possibilities are that $(V_i, V_j)$ is an irregular pair, or $(V_i,V_j)$ is regular but $V_j$ is not a neighbor of $V_i$, or $V_j=V_i$. Therefore by Lemma \ref{mainlmm} and Lemma~\ref{vlmm},
\begin{align*}
\P^{\otimes 2}(\Delta E_5) &\le (\ep q^2  + 3\ep q^2 + 2\ep^{1/12} q^2 + q)\frac{9\P(S')^2}{4q^2}\\
&\le (14\ep^{1/12}+ 3m^{-1})\P(S')^2.
\end{align*}
This completes the process of adding and deleting edges. If $\Delta E$ is the set of all edges that were either added or deleted, then the above estimates show that \eqref{diffbd} holds. 

Let us now verify that the resulting graph is a disjoint union of cliques. For each $\mc'\in \mfc'$, let $V(\mc')$ be the union of all $V\in \mc'$. In the new graph, each $V(\mc')$ is a clique, and there are no edges between two such cliques. Moreover, any vertex that belongs to some $V_i\in \md'$ has no edges incident to it in the new graph. Thus, the new graph is the disjoint union of the above cliques and a bunch of singleton vertices that are disconnected from all else. This also shows that any non-singleton clique in the new graph must be one of the $V(\mc')$'s. But for any $\mc'\in \mfc'$, Lemma \ref{vlmm} gives
\begin{align*}
\P(V(\mc')) &= \sum_{V\in \mc'} \P(V) \\
&\ge |\mc'|\frac{\P(S')}{2q} \ge |\mc|\frac{\P(S')}{2q}\ge \ep^{1/4}q\frac{\P(S')}{2q} = \frac{1}{2}\ep^{1/4}\P(S').
\end{align*}
This completes the proof. 
\end{proof}

\section{Constructing the tree}\label{treesec}
Let $P^*$, $\delta_0$, $A$, $\ep$, $m$, $\kappa$, $N$ and $t_1,\ldots, t_N$ remain as defined in Section~\ref{setup}.   We will now repeatedly apply Lemma \ref{cliquelmm} to extract from $S$ a nested hierarchy of subsets with desirable properties. The subsets will be constructed in such a way that each subset is either a singleton, or has $\P$-measure uniformly bounded below by a positive constant that depends only on $\ep$ and $m$. Any such constant will henceforth be denoted by $C(\ep, m)$. This will allow us to apply Lemma \ref{cliquelmm} to partition such a non-singleton subset if $P^*$ and $\delta_0$ are small enough, depending only on $\ep$ and $m$. We will keep dividing the non-singleton subsets until we are left with only singletons. 

Henceforth, whenever we say ``$\delta_0$ and $P^*$ are small enough'', we will mean ``$\delta_0$ and $P^*$ are smaller than constants depending only on $\ep$ and $m$''.

Let $S' =S\setminus A$. By Lemma \ref{asmall}, $\P(S')\ge 1/2$ if $\delta_0$ is small enough. Define a graph on $S'$ as in the beginning of Section \ref{modifyingG}, using $t=t_1$, and obtain a partition of $S'$ using Lemma~\ref{cliquelmm}. Obtain a partition of $S$ by taking this partition of $S'$ and appending to it singleton sets consisting of the elements of $A$. Let $\mv_1$ denote this partition. By \eqref{pblower}, any non-singleton element $V\in\mv_1$ does not intersect $A$ and satisfies $\P(V)\ge C(\ep,m)$. Thus we can apply Lemma \ref{cliquelmm} to any such $V$ with $t=t_2$, if $\delta_0$ and $P^*$ are small enough. In this manner, we obtain a collection $\mv_2$ of disjoint sets, each of which is a subset of some non-singleton element of $\mv_1$. Then we partition each non-singleton element of $\mv_2$ by applying the procedure of Section \ref{modifyingG} with $t=t_3$ to obtain $\mv_3$, and continue this recursive partitioning until we arrive at $\mv_N$. This is possible since $N\le C(\ep,m)$, which, by \eqref{pblower}, ensures that the conditions \eqref{pps} and \eqref{delta0}  are  never violated if $\delta_0$ and $P^*$ are small enough.

Having defined $\mv_1,\ldots, \mv_N$, define $\mv_{N+1}$ to be the set of all singleton sets $\{x\}$ such that $x$ belongs to some non-singleton member of $\mv_N$. Note, in particular, that we are not applying Lemma \ref{cliquelmm} while partitioning the elements of $\mv_{N}$ into singletons.  Lastly, define $\mv_0:=\{S\}$. 

Let $T$ be the set of all pairs $(i,V)$ where $0\le i\le N+1$ and $V\in \mv_i$. This is sort of like the union of the $\mv_i$'s, except that we pair each element $V$ with the corresponding $i$ to deal with the problem of the same $V$ appearing in two different $\mv_i$'s (which can happen if some $V$ is partitioned into just one set in some step). For simplicity, we will refer to the element $(i,V)\in T$ as just $V$.

We will now define a tree structure on $T$. Note that by construction, if an element $V\in T$ belongs to some $\mv_i$, $i\ge 1$, then it has a uniquely defined parent $U\in \mv_{i-1}$. Putting edges between such parent-child pairs creates a graph which is obviously a tree.  Also, it is clear that the set of leaves of this tree can be identified with $S$. Define  $r := (0,S)$ 
to be the root of $T$. 

For each non-singleton node $V\in \mv_i$ for $1\le i\le N-1$, let $\Delta E(V)$ be the set of edges of $V$ that need to be modified while applying Lemma~\ref{cliquelmm} to convert $V$ into a disjoint union of cliques. If $V$ is a singleton set, let $\Delta E(V)$ be empty. Let $\Delta E(S')$ be the set of edges that need to be modified while applying Lemma~\ref{cliquelmm} to $S'$. Lastly, let $\Delta E(A)$ be the set of all pairs $(x,y)$ with  at least one of $x$ and $y$ in~$A$. Let $\Delta E$ be the union of all these sets. 

We prove three lemmas in this section. In all of these, we assume that $P^*$ and $\delta_0$ are sufficiently small, depending on $\ep$ and $m$, so that Lemma \ref{cliquelmm} can be applied. We will view the elements of $S$ as the leaves of $T$, and for any $x,y\in S$, we will denote by $(x,y)_r$ the Gromov product of $x$ and $y$ under the graph distance on $T$, with respect to the base point $r$. 
\begin{lmm}\label{endlmm1}
For the set $\Delta E$ defined above, we have
\[
\P^{\otimes 2} (\Delta E) \le C \ep^{1/24} + C m^{-1/2} + 2\delta_0,
\]
where $C$ is a universal constant. 
\end{lmm}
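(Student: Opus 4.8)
\emph{Plan.} The idea is to bound $\P^{\otimes 2}(\Delta E)$ by subadditivity, splitting $\Delta E$ into the contribution of $A$ and the contributions of the $N$ successive applications of Lemma~\ref{cliquelmm} used to build the tree. Since $\Delta E(A)$ consists of all pairs $(x,y)$ with $x\in A$ or $y\in A$, we have $\P^{\otimes 2}(\Delta E(A))\le 2\P(A)\le 2\delta_0$ by Lemma~\ref{asmall}. It remains to control $\Delta E(S')$ together with the sets $\Delta E(V)$ for non-singleton $V\in\mv_i$, $1\le i\le N-1$.

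I would organize these by \emph{level}. Call the modification of $S'$ (carried out with threshold $t_1$) level $1$, and for $2\le\ell\le N$ let level $\ell$ be the collection of modifications $\Delta E(V)$ over non-singleton $V\in\mv_{\ell-1}$ (carried out with threshold $t_\ell$); there are exactly $N$ levels. Fix a level $\ell$. The sets $V$ modified at level $\ell$ are pairwise disjoint subsets of $S$, and $\Delta E(V)\subset V\times V$, so the sets $\Delta E(V)$ are pairwise disjoint, and Lemma~\ref{cliquelmm} (inequality~\eqref{diffbd}, together with $\P(S')\le 1$ and $\P(V)\le 1$) gives
\[
\sum_{V\text{ at level }\ell}\P^{\otimes 2}(\Delta E(V))\le C\bigl(\ep^{1/12}+m^{-1}\bigr)\sum_{V\text{ at level }\ell}\P(V)^2 .
\]
Since $\sum_{V}\P(V)\le\P(S)=1$, we have $\sum_V\P(V)^2\le 1$, so each of the $N$ levels contributes at most $C(\ep^{1/12}+m^{-1})$.

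Summing over the $N$ levels and using $N\le 1/\kappa$ (established right after~\eqref{kappadef}) together with $\ep^{1/12}+m^{-1}\le 2\max\{\ep^{1/12},m^{-1}\}=2\kappa^2$, the total contribution of all levels is at most $N\cdot C(\ep^{1/12}+m^{-1})\le 2C\kappa\le 2C(\ep^{1/24}+m^{-1/2})$. Adding the bound $2\delta_0$ for $\Delta E(A)$ and renaming the constant yields $\P^{\otimes 2}(\Delta E)\le C\ep^{1/24}+Cm^{-1/2}+2\delta_0$, as claimed. The only subtle point is that the recursion has roughly $1/\kappa$ levels while each level can be bounded only by a quantity of size roughly $\kappa^2$; this is exactly why $\kappa$ in~\eqref{kappadef} is defined with a $24$th (rather than $12$th) root, so that the product still tends to $0$. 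One should also note that invoking Lemma~\ref{cliquelmm} on each non-singleton $V$ is legitimate because, as arranged in the construction of this section, every such $V$ satisfies $\P(V)\ge C(\ep,m)$, so~\eqref{pps} holds once $\delta_0$ and $P^*$ are small enough depending on $\ep$ and $m$.
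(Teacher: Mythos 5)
Your proof is correct and takes essentially the same approach as the paper: subadditivity across the $N$ applications of Lemma~\ref{cliquelmm} plus $\Delta E(A)$, the bound $\sum_V\P(V)^2\le 1$ from each $\mv_i$ being a partition of a subset of $S$, and then $N\kappa<1$ together with the definition of $\kappa$ to convert $C(\ep^{1/12}+m^{-1})N$ into $C(\ep^{1/24}+m^{-1/2})$. The disjointness remark is true but unneeded (subadditivity alone suffices), and your algebraic path via $\ep^{1/12}+m^{-1}\le 2\kappa^2$ is equivalent to the paper's splitting of $\kappa^{-1}$ against each term.
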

\begin{proof}
Note that by Lemma~\ref{cliquelmm} and Lemma~\ref{asmall},
\begin{align*}
\P^{\otimes 2} (\Delta E) &\le \P^{\otimes 2} (\Delta E(S')) +\sum_{i=1}^{N-1} \sum_{V\in \mv_i}\P^{\otimes 2} (\Delta E(V)) + 2\P(A)\\
&\le  C(\ep^{1/12} + m^{-1}) \biggl(\P(S')^2 + \sum_{i=1}^{N-1} \sum_{V\in \mv_i} \P(V)^2\biggr) + 2\delta_0.
\end{align*}
Since each $\mv_i$ is a partition of a subset of $S$, 
\[
\sum_{V\in \mv_i} \P(V)^2\le \sum_{V\in \mv_i} \P(V) \le \P(S)=1.
\]
Therefore, since $N\kappa<1$ by the definition of $N$, we get
\begin{align*}
\P^{\otimes 2} (\Delta E) &\le C(\ep^{1/12} + m^{-1})N + 2\delta_0\\
&\le C(\ep^{1/12} + m^{-1}) \kappa^{-1} + 2\delta_0.
\end{align*}
By the definition \eqref{kappadef} of $\kappa$, this gives the desired result. 
\end{proof}
\begin{lmm}\label{endlmm2}
For any $(x,y)\notin \Delta E$ such that $x\ne y$,
\[
s(x,y) \le ((x,y)_r + 1)\kappa + \delta_0.
\]
\end{lmm}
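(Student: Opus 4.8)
The plan is to read off the value of $(x,y)_r$ directly from the recursive construction of $T$, and then exploit the fact that each application of Lemma~\ref{cliquelmm} changes only a controlled set of edges, none of which is $(x,y)$.

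First I would recall that for two leaves $x,y$ of a rooted tree, $(x,y)_r$ is the number of edges on the common portion of the leaf‑to‑root paths of $x$ and $y$, i.e.\ the depth of their least common ancestor. Tracing through the construction of $\mv_0,\mv_1,\dots,\mv_{N+1}$, the ancestor of a leaf at depth $j$ is precisely the member of $\mv_j$ containing that leaf, so $(x,y)_r$ equals the largest index $i$ for which $x$ and $y$ lie in a common member of $\mv_i$. Call this common member $V$. Since $x\neq y$, the set $V$ is non‑singleton, hence $V\in\mv_i$ with $0\le i\le N$, and $x$ and $y$ lie in distinct members of $\mv_{i+1}$ (for $i=N$ this means distinct singletons of $\mv_{N+1}$, which is automatic).

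Next I would split on the value of $i$. If $i\le N-1$, then (for $i\ge1$) the members of $\mv_{i+1}$ contained in $V$ are obtained by forming the graph $G$ on $V$ with $x\sim y$ iff $s(x,y)\ge t_{i+1}$, modifying $G$ into a disjoint union of cliques via Lemma~\ref{cliquelmm}, and taking these cliques; while for $i=0$ one first notes that $(x,y)\notin\Delta E$ forces $x,y\notin A$, so $x,y\in S'=S\setminus A$ and $\mv_1$ is obtained the same way from $S'$ with threshold $t_1$. In either case $x$ and $y$ land in distinct cliques, so $(x,y)$ is a non‑edge of the modified graph; since $(x,y)\notin\Delta E$ and $\Delta E\supseteq\Delta E(V)$ (resp.\ $\Delta E\supseteq\Delta E(S')$), the modification leaves the edge status of $(x,y)$ unchanged, whence $(x,y)$ was already a non‑edge of $G$, i.e.\ $s(x,y)<t_{i+1}$. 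By~\eqref{tidef}, $s(x,y)<t_{i+1}\le(i+1)\kappa+\delta_0=((x,y)_r+1)\kappa+\delta_0$, as desired. If instead $i=N$, the bound is immediate: $s(x,y)\le1$ (recall $b=1$), while $N$ being the largest integer with $N\kappa<1$ forces $(N+1)\kappa\ge1$, so $((x,y)_r+1)\kappa+\delta_0=(N+1)\kappa+\delta_0\ge1\ge s(x,y)$.

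No step here presents a genuine obstacle; the only place requiring care is the bookkeeping in the first paragraph — correctly matching the Gromov product $(x,y)_r$ with the depth of the least common ancestor when a set may be ``partitioned'' into a single piece and thus recur at consecutive levels of $T$ — together with verifying that the set $\Delta E$ assembled in Section~\ref{treesec} really does contain every $\Delta E(V)$, $\Delta E(S')$ and $\Delta E(A)$ relevant to the pair $(x,y)$.
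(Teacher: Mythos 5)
Your proof is correct and follows essentially the same route as the paper: identify $(x,y)_r$ with the largest $i$ for which $x,y$ lie in a common member of $\mv_i$, then use the fact that $(x,y)\notin\Delta E$ forces $(x,y)$ to be a non-edge of the original threshold graph at level $i+1$, yielding $s(x,y)<t_{i+1}$, and handle $i=N$ separately via $(N+1)\kappa\ge 1$. The paper phrases the main step as a contradiction argument (assume $s(x,y)\ge t_{i+1}$ and deduce the edge was deleted) while you run it forward, but the content is identical; the only genuine bookkeeping point you rightly flag — that $(x,y)\notin\Delta E(A)$ guarantees $x,y\in S'$ when $i=0$ — is handled the same way in the paper.
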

\begin{proof}
Let $i:= (x,y)_r$, so that $i$ is the largest integer such that $x$ and $y$ both belong to the same member of $\mv_i$. First, suppose that $1\le i\le N-1$ and $s(x,y)\ge t_{i+1}$. Let $V$ be the element of $\mv_i$ that contains $x$ and $y$. Then while applying Lemma \ref{cliquelmm} to $V$, there is an edge between $x$ and $y$ in the original graph, but that edge is deleted in the modification. Thus, $(x,y)\in \Delta E(V)\subset \Delta E$, which is not true by assumption. Therefore $s(x,y)$ must be less than $t_{i+1}$. 

If $i=0$, then also the above deduction holds: If $s(x,y)\ge t_1$ and $x$ and $y$ are both in $S'$, then by the same logic as above we conclude that $(x,y)\in \Delta E$. On the other hand, if $s(x,y)\ge t_1$ and at least one of $x$ and $y$ is outside $S'$, then $(x,y)\in \Delta E(A)\subset \Delta E$.

Combining the above observations, and recalling the bound \eqref{tidef}, we get that if $0\le i\le N-1$, then 
\begin{align*}
s(x,y) &< t_{i+1} \le (i+1)\kappa + \delta_0\\
&= ((x,y)_r +1)\kappa + \delta_0.
\end{align*}
If $i=N$, then note that since $(N+1)\kappa\ge 1$ (by the definition of $N$), 
\[
s(x,y)\le 1\le (N+1)\kappa = ((x,y)_r+1)\kappa. 
\]
Finally, note that since $x\ne y$, we cannot have $i=N+1$.
\end{proof}
\begin{lmm}\label{endlmm3}
For any $(x,y)\notin \Delta E$ such that $x\ne y$, 
\begin{align*}
s(x,y)\ge  (x,y)_r \kappa - \delta_0.
\end{align*}
\end{lmm}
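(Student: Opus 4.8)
The plan is to mirror the proof of Lemma \ref{endlmm2}, but using the \emph{clique} structure of the partition pieces in place of the edge-deletion argument. Write $i := (x,y)_r$; as observed in the proof of Lemma \ref{endlmm2}, $i$ is the largest integer for which $x$ and $y$ lie in a common member of $\mv_i$. If $i=0$ the claim reads $s(x,y)\ge -\delta_0$, which is vacuous since $s\ge 0$, so I would assume $i\ge 1$. Since $x\ne y$ the common member is not a singleton, and since every member of $\mv_{N+1}$ is a singleton this forces $1\le i\le N$.

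Next, let $V\in\mv_i$ be the common member containing $x$ and $y$. By the recursive construction, $V$ is a non-singleton piece produced by an application of Lemma \ref{cliquelmm} with threshold $t=t_i$ --- applied to $S'$ when $i=1$, and to the (non-singleton) parent $U\in\mv_{i-1}$ of $V$ when $i\ge 2$. In the graph $G$ on that vertex set, two vertices $u,v$ are joined exactly when $s(u,v)\ge t_i$, and after the modification supplied by Lemma \ref{cliquelmm} the piece $V$ is one of the cliques; hence $x$ and $y$ are joined by an edge in the \emph{modified} graph.

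The crux is to transfer this edge back to the \emph{original} graph. The set of edges added or deleted in that application of Lemma \ref{cliquelmm} is contained in $\Delta E$: it is $\Delta E(S')$ when $i=1$, and $\Delta E(U)$ with $1\le i-1\le N-1$ when $i\ge 2$, both of which were explicitly included in $\Delta E$. Since $(x,y)\notin\Delta E$, the edge $(x,y)$ is unmodified, so it is already present in $G$, i.e.\ $s(x,y)\ge t_i$. Combining with the bound $|t_i-i\kappa|\le\delta_0$ from \eqref{tidef} gives $s(x,y)\ge t_i\ge i\kappa-\delta_0=(x,y)_r\,\kappa-\delta_0$, as required. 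I do not expect a genuine obstacle here: the content is just bookkeeping confirming that the unmodified edge $(x,y)$ inside the clique $V$ certifies $s(x,y)\ge t_i$. The only points needing a little care are the boundary cases $i=0$ (handled above) and $i=N$, the latter still being fine because $\mv_N$ is itself obtained by applying Lemma \ref{cliquelmm} with threshold $t_N$ to the non-singleton nodes of $\mv_{N-1}$.
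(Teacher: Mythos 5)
Your proof is correct and follows essentially the same argument as the paper's: for $1\le i\le N$, the common member $V\in\mv_i$ of $x$ and $y$ is a clique of the modified graph from Lemma \ref{cliquelmm} (applied to $S'$ if $i=1$, to the parent $U\in\mv_{i-1}$ if $i\ge2$), and since $(x,y)\notin\Delta E$ the edge must already be present in the original graph, so $s(x,y)\ge t_i\ge i\kappa-\delta_0$; the $i=0$ case is trivial. The paper states the core step in contrapositive form (assume $s(x,y)<t_i$ and derive $(x,y)\in\Delta E$), but this is the same reasoning, and your observation that $x\ne y$ automatically places both $x,y$ in $S'$ when $i\ge1$ correctly streamlines the paper's slightly redundant $i=1$ case split.
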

\begin{proof}
As in the proof of Lemma \ref{endlmm2}, let $i := (x,y)_r$, and note that since $x\ne y$, we must have $0\le i\le N$. First, suppose that $2\le i\le N$ and  $s(x,y)< t_i$. We know that $x$ and $y$ are both in some $V\in \mv_i$. Let $U\in \mv_{i-1}$ be the parent of $V$ in $T$. Then while applying Lemma~\ref{cliquelmm} to $U$, $(x,y)$ is not an edge in the original graph, but since $x$ and $y$ both belong to $V$, $(x,y)$ must be an edge in the modified graph. Thus, $(x,y)\in \Delta E(U)\subset \Delta E$, which is false by assumption. Consequently, $s(x,y)\ge t_i$. 

If $i=1$ and $s(x,y)< t_1$, then either $x$ and $y$ are both in $S'$, in which case the same argument shows that $(x,y)\in \Delta E(S') \subset \Delta E$, or at least one of $x$ and $y$ is in $A$, in which case $(x,y)\in \Delta E(A)\subset \Delta E$.  

Combining, and applying \eqref{tidef}, we get that if $1\le i\le N$, then 
\begin{align*}
s(x,y)\ge t_i \ge i \kappa -\delta_0 = (x,y)_r \kappa - \delta_0.
\end{align*}
Lastly, if $i=0$, note that the inequality is automatic since $(x,y)_r=0$. This completes the proof of the lemma.
\end{proof}

\section{Completing the proof of Theorem \ref{mainresult}}
Take any $\eta>0$. We have to prove the existence of a $\gamma>0$, depending only on $\eta$, such that if $P^*< \gamma$ and $\textup{Hyp}(S,\mf,\P, s)< \gamma$, then $\textup{Tree}(S,\mf,\P, s)< \eta$. To do this, first choose $\ep$ so small and $m$ so large that 
\[
C\ep^{1/24}+Cm^{-1/2}\le \frac{\eta}{4},
\]
where $C$ is the universal constant from Lemma \ref{endlmm1}, and also
\[
\kappa = \max\{\ep^{1/24}, m^{-1/2}\}\le \frac{\eta}{4}.
\]
Let  $\delta := \textup{Hyp}(S,\mf,\P, s)$, and let $\delta_0 := \delta^{1/8}$. If $P^*$ and $\delta_0$ are small enough (depending on $\ep$ and $m$), then the method of Section \ref{treesec} yields $\Delta E$ and $T$ satisfying the conclusions of Lemmas~\ref{endlmm1}, \ref{endlmm2} and \ref{endlmm3}. Recall also that $0\le s(x,y)\le 1$ and $0\le (x,y)_r\kappa \le (N+1)\kappa\le 1+\kappa$ for all $x$ and $y$. Consequently, if $X$ and $Y$ are i.i.d.~draws from $\P$, then
\begin{align*}
\E|s(X,Y) - (X,Y)_r\kappa| &\le \kappa + \delta_0 +(1+\kappa)( \P^{\otimes 2}(\Delta E)+ \P(X=Y))\\
&\le \frac{\eta}{4}+\delta_0+\biggl(1+\frac{\eta}{4}\biggr)\biggl(\frac{\eta}{4} + 2\delta_0 + P^*\biggr). 
\end{align*}
This shows that if $P^*$ and $\textup{Hyp}(S,\mf,\P,s)$ are small enough, depending on $\eta$, then $\textup{Tree}(S,\mf,\P, s)<\eta$. 

\section{From Theorem \ref{mainresult} to Theorem \ref{mainresult0}}
In this section we prove Theorem \ref{mainresult0} using Theorem \ref{mainresult}. Initially, let us continue working under the assumption that $S$ is finite and $\mf$ is the power set of $S$. Take any $\ep>0$. Then by Theorem \ref{mainresult}, there is some $\delta>0$ such that if $P^*< \delta$ and $\textup{Hyp}(S,\mf,\P, s)< \delta$, then $\textup{Tree}(S,\mf,\P, s)< \ep$. Suppose that $P^*\ge \delta$. Then we first create a new system where this violation does not happen. Take each $x\in S$ divide it up into $k(x)$ vertices, where $k(x)$ is chosen so large that $\P(x)/k(x)< \delta$. Let $S'$ be the new set of vertices, consisting of $k(x)$ copies of each $x\in S$. Let $f$ be a map from $S'$ into $S$ that takes any copy of $x\in S$ to $x$, so that $|f^{-1}(x)|=k(x)$. Define a probability measure $\P'$ on $S'$ as 
\[
\P'(y) := \frac{\P(f(y))}{k(f(y))}. 
\]
The probability measure $\P'$ can be described in words as follows. Drawing a vertex from $\P'$ is the same as first picking a vertex from $\P$, and then choosing one of its copies in $S'$ uniformly at random. Note that if $Y\sim \P'$, then $f(Y)\sim \P$. 

Define also a similarity function $s'$ on $S'$ as
\[
s'(y,z) := s(f(y), f(z)). 
\]
Then by the observations from the previous paragraph, it follows that 
\[
\textup{Hyp}(S',\mf',\P', s') = \textup{Hyp}(S,\mf,\P,s),
\]
where $\mf'$ is the power set of $S'$. 
On the other hand $\max_{y\in S}\P'(y)<\delta$ by construction. Thus, by Theorem~\ref{mainresult}, 
\[
\textup{Tree}(S',\mf',\P', s')<\ep.
\] 
Consequently, there exists a tree $T'$ that is compatible with $S'$ (in the sense of Definition \ref{treedef}), with root $r$, and a number $\alpha$ such that 
\begin{align}\label{bigtree}
\E|s'(Y,Z) - \alpha (Y,Z)_{r} |<\ep,
\end{align}
where $Y$ and $Z$ are i.i.d.~draws from $\P'$, and $(Y,Z)_r$ is the Gromov product of $Y$ and $Z$ under the graph distance on $T'$, with respect to the base point~$r$. 

Now, for each $x\in S$, let $Y(x)$ be a vertex chosen uniformly at random from $f^{-1}(x)$. Modify the tree $T'$ by deleting all leaves other than the $Y(x)$'s, and also deleting the edges joining these leaves  to their parents. The resulting graph  is still a tree, and its leaves are in one-to-one correspondence with the set $S$. Thus we can relabel its leaves to define a tree $\tilde{T}$ with set of leaves $S$ and root $r$. 

Let $X_1$ and $X_2$ be i.i.d.~draws from $\P$, independent of $\tilde{T}$. Then $Y(X_1)$ and $Y(X_2)$ are i.i.d.~draws from $\P'$, and hence by \eqref{bigtree},
\[
\E|s'(Y(X_1),Y(X_2)) - \alpha (Y(X_1),Y(X_2))_r |<\ep.
\]
But $s'(Y(X_1),Y(X_2)) = s(X_1, X_2)$, and by our definition of $\tilde{T}$,
\begin{align*}
&d_{T'}(Y(X_1),Y(X_2)) = d_{\tilde{T}}(X_1, X_2), \\ 
&d_{T'}(Y(X_1), r) = d_{\tilde{T}} (X_1, r), \ \ d_{T'}(Y(X_2), r) = d_{\tilde{T}} (X_2, r).  
\end{align*}
Therefore $(Y(X_1), Y(X_2))_r = (X_1, X_2)_r$, where the Gromov product on the left is on the tree $T'$, and the Gromov product on the right is on the tree $\tT$. This gives 
\[
\E|s(X_1,X_2) -\alpha  (X_1,X_2)_r |<\ep,
\]
where the expectation is now taken over $X_1$, $X_2$ and $\tilde{T}$. Since $\tilde{T}$ is independent of  $X_1$ and $X_2$, this proves the existence of a tree $T$ with set of leaves $S$ and root $r$, such that 
\[
\E|s(X_1,X_2) - \alpha (X_1,X_2)_r|<\ep.
\]
Thus, we may conclude that $\textup{Tree}(S,\mf, \P,s)<\ep$. This completes the proof of Theorem \ref{mainresult0} under the assumptions that $S$ is finite and $\mf$ is the power set of $S$. 

Let us now consider general $(S,\mf, \P,s)$, where $\mf$ is countably generated. Take any $\ep>0$. The case of finite $S$ gives a $\delta$ corresponding to $\ep/2$. Take this $\delta$, and suppose that 
\begin{align}\label{hypassump}
\textup{Hyp}(S,\mf,\P, s)< \frac{\delta}{2}.
\end{align}
We will show that in the general case, this implies $\textup{Tree}(S, \mf,\P,s)<\ep$.

Let $\{A_1, A_2,\ldots\}$ be a set of generators of $\mf$. For each $n$, let $\cp_n$ be the partition of $S$ generated by $A_1,\ldots, A_n$. Let $\cp_n^{2}$ be the set of all sets of the form $A\times B$ where $A,B\in \cp_n$. Let $\mg_n$ be the set of subsets of $S^2$ that are unions of elements of $\cp_n^{2}$. Define
\[
\mg :=\bigcup_{n=1}^\infty \mg_n.
\]
It is not difficult to show that $\mg$ is an algebra of sets that generates the $\sigma$-algebra $\mf\times \mf$ on $S^2$. Now take any $k\ge 1$. For $0\le j\le k$, let 
\[
B_j := \{(x,y)\in S^2: j/k\le s(x,y)< (j+1)/k\}.
\]
By the measurability of $s$, $B_j\in \mf \times \mf$. Therefore by a basic result of measure theory, given any $\eta>0$ there exists $B_j'\in \mg$ such that $\P^{\otimes 2}(B_j\Delta B_j')\le \eta$. Define
\[
D := \bigcup_{j=0}^k B_j\Delta B_j',
\]
so that $\P^{\otimes 2}(D)\le (k+1)\eta$.

Since $\mg_n$ is an increasing sequence, there is some large enough $n$ such that $B_j'\in \mg_n$ for all $j$. Define a function $\ts :S^2 \to[0,1]$ as $\ts(x,y)=j/k$ where $j$ is a smallest number such that $(x,y)\in B_j'$. If there is no such $j$, let $\ts(x,y)=0$. Since each $B_j'$ is a union of members of $\cp_n^2$, it follows that $\ts$ is constant on each element of $\cp_n^2$. 

Now suppose that $\ts(x,y)=j/k$, but $(x,y)\notin B_j$. Then there are two possibilities: (a) $(x,y)\in B_j'$. Then clearly, $(x,y)\in D$. (b) $(x,y)\notin B_j'$. In this case, $j$ must be zero and $(x,y)$ must not belong to any $B_i'$. But $(x,y)\in B_i$ for some $i$. Thus again, $(x,y)\in D$.

On the other hand, suppose that $(x,y)\in B_j$ but $\ts(x,y)\ne j/k$. Again, this implies that either $(x,y)$ is not in any $B_i'$, or $(x,y)\in B_i'$ for some $i\ne j$. In the first case, we clearly have $(x,y)\in D$. In the second, $(x,y)\notin B_i$ and hence $(x,y)\in D$. 

Combining the observations of the last two paragraphs, we see that if $|\ts(x,y)-s(x,y)|>1/k$, then $(x,y)\in D$. Thus, if $X$ and $Y$ are i.i.d.~draws from $\P$, then
\begin{align}\label{tsapprox}
\E|\ts(X,Y) - s(X,Y)|\le \frac{1}{k} + \P^{\otimes 2}(D)\le \frac{1}{k}+(k+1)\eta.
\end{align}
Now recall  the assumption \eqref{hypassump} and the fact that $\delta$ is a function of $\ep$. Therefore, the above display shows that by choosing $k$ large enough (depending on $\ep$), and then choosing $\eta$ small enough (depending on $k$ and $\ep$), we can  ensure that 
\[
\textup{Hyp}(S, \mf, \P, \ts)< \delta. 
\]
Now let $\tx$ be the element of $\cp_n$ that contains $X$ and let $\ty$ be the element of $\cp_n$ that contains $Y$. Since $\cp_n$ is a finite set, we can endow it with its power set $\sigma$-algebra $2^{\cp_n}$ (which identifies with $\mg_n$), and may consider $\tx$ and $\ty$ to be $\cp_n$-valued random variables. Then $\tx$ and $\ty$ are i.i.d.~random variables with law $\tp$, where $\tp$ identifies with the restriction of $\P$ to $\mg_n$. Since $\ts$ is constant on elements of $\cp_n^2$, we can naturally view $\ts$ as a function on $\cp_n\times \cp_n$.  Lastly, observe that $\ts(\tx,\ty) = \ts(X,Y)$. Combining all of these observations, we get
\[
\textup{Hyp}(\cp_n, 2^{\cp_n}, \tp, \ts) = \textup{Hyp}(S, \mf, \P, \ts) < \delta. 
\]
Since $\cp_n$ has finite cardinality, this implies that 
\[
\textup{Tree}(\cp_n, 2^{\cp_n}, \tp, \ts) < \frac{\ep}{2}. 
\]
In particular, there is a tree $\tT$ with root $r$ that is compatible with $(\cp_n, 2^{\cp_n})$, and a number $\alpha \ge 0$, such that
\begin{align}\label{tstxty}
\E|\ts(\tx,\ty)- \alpha(\tx,\ty)_r| < \frac{\ep}{2},
\end{align}
where $(\tx,\ty)_r$ is the Gromov product of $\tx$ and $\ty$ under the graph distance on $\tT$, with respect to the base point $r$. Let us now extend the tree $\tT$ by appending $S$ to the set of nodes, and adding an edge between each $x\in S$ and the element of $\cp_n$ that contains $x$. Call the new tree $T$. Then $S$ is the set of leaves of $T$. The set $T\setminus S$ is just $\tT$, which is finite. Lastly, for any $v\in T\setminus S$, the set of leaves that are descendants of $v$ is a union of elements of $\cp_n$, and therefore measurable. Thus, $T$ is compatible with $(S,\mf)$. 

Next, note that $(\tx, \ty)_r = (X, Y)_r$, because if $d_T$ is the graph distance on $T$, then $d_T(X, r) = d_{\tT}(\tx, r)+1$, $d_T(Y,r)=d_{\tT}(\ty,r)+1$, and $d_T(X,Y) = d_{\tT}(\tx, \ty)+2$. Also, we know that $\ts(\tx,\ty)=\ts(X,Y)$. Therefore by \eqref{tstxty},
\begin{align*}
\E|\ts(X,Y)- \alpha(X,Y)_r| < \frac{\ep}{2}.
\end{align*}
Invoking \eqref{tsapprox}, this shows that if $k$ is chosen large enough (depending on $\ep$), and then $\eta$ is chosen small enough (depending on $k$ and $\ep$), we can ensure that 
\[
\E|s(X,Y)- \alpha(X,Y)_r| < \ep.
\]
Consequently, $\textup{Tree}(S, \mf, \P, s)<\ep$, completing the proof of Theorem \ref{mainresult0}.

\section{Proof of Theorem \ref{spinthm}}\label{spinproof}
Take any strictly increasing continuous function $\rho:\R\to [0,\infty)$, and define the similarity function
\[
s_n(\sigma^1, \sigma^2) := \rho(f(R_{1,2})).
\]
If three configurations $\sigma^1$, $\sigma^2$ and $\sigma^3$ satisfy 
\[
f(R_{1,2}) \ge \min\{f(R_{1,3}), f(R_{2,3})\}-\ep
\]
for some $\ep\ge 0$, then by the monotonicity and uniform continuity of $\rho$ on the range of $f$, 
\begin{align*}
\rho(f(R_{1,2})) &\ge \rho(\min\{f(R_{1,3}), f(R_{2,3})\}-\ep) \\
&\ge  \rho(\min\{f(R_{1,3}), f(R_{2,3})\}) - \delta(\ep) \\
&= \min\{\rho(f(R_{1,2})), \rho(f(R_{1,3})) \} - \delta(\ep),
\end{align*}
where $\delta(\ep)\to 0$ as $\ep\to 0$. From this and the boundedness of $\rho$ on the range of $f$, we see that if \eqref{panchenkogen} holds, then 
\[
\lim_{n\to\infty} \E\smallavg{(\min\{\rho(f(R_{1,3})), \rho(f(R_{2,3}))\}-\rho(f(R_{1,2})))_+} = 0.
\]
Consequently, $\textup{Hyp}(\Sigma_n, \mf_n, \mu_n, s_n) \to 0$ in probability as $n\to\infty$, where $\mf_n$ is the power set of $\Sigma_n$ if $\Sigma_n = \{-1,1\}^n$ and the Borel $\sigma$-algebra of $\Sigma_n$ if $\Sigma_n = \sqrt{n}\mathbb{S}^{n-1}$. 
Thus, Theorem \ref{mainresult0} implies that 
\begin{align*}
\textup{Tree}(\Sigma_n,\mf_n,\mu_n, s_n)\to 0 \text{ in probability as $n\to\infty$.}
\end{align*}
Therefore, there are sequences $\ep_n$ and $\delta_n$ tending to zero as $n\to\infty$, such that the following holds. With probability at least $1-\ep_n$, there exists a tree $T_n$ with root $r_n$, that is compatible with $(\Sigma_n,\mf_n)$ in the sense of Definition~\ref{treedef}, and a number $a_n\ge 0$, satisfying
\[
\avg{|\rho(f(R_{1,2})) - a_n(\sigma^1, \sigma^2)_{r_n}|}\le  \delta_n,
\]
where  $(\sigma^1, \sigma^2)_{r_n}$ is the Gromov product under graph distance on the tree $T_n$, with respect to the  base point $r_n$.

By the remark immediately below Definition \ref{treedef}, the nodes of $T_n$ give a hierarchical clustering of $\Sigma_n$ into measurable clusters. For each node $\alpha$, let $q_\alpha := \rho^{-1}(a_n d_\alpha)$, where $d_\alpha$ is the length of path from $r_n$ to $\alpha$. If $\alpha$ is the smallest cluster containing $\sigma^1$ and $\sigma^2$, then $(\sigma^1,\sigma^2)_{r_n} =d_\alpha$. Therefore if $\rho(f(R_{1,2})) \approx a_n (\sigma^1, \sigma^2)_{r_n}$, then $f(R_{1,2})\approx q_\alpha$. This completes the proof.

\section*{Acknowledgements}
We thank Sky Cao, Wei-Kuo Chen, Persi Diaconis, Jacob Fox, Susan Holmes and  Dmitry Panchenko for helpful comments and references.

\end{document}